\documentclass[a4paper]{article}

\usepackage[pages=all, color=black, position={current page.south}, placement=bottom, scale=1, opacity=1, vshift=5mm]{}

\usepackage[margin=1in]{geometry} 

\usepackage{amsmath}
\usepackage{amsthm}
\usepackage{amssymb}
\usepackage{tcolorbox}
\usepackage{commath}
\usepackage{amsfonts}
\usepackage{mathtools}
\usepackage{mathabx}
\usepackage{tabulary}
\usepackage{booktabs}
\usepackage[toc,page]{appendix}
\usepackage[mathscr]{euscript}
\usepackage{hyperref}
\usepackage{mathrsfs}
\usepackage[utf8]{inputenc}
\usepackage{hyperref}
\hypersetup{
	unicode,
	pdfauthor={Author One, Author Two, Author Three},
	pdftitle={A simple article template},
	pdfsubject={A simple article template},
	pdfkeywords={article, template, simple},
	pdfproducer={LaTeX},
	pdfcreator={pdflatex}
}


\usepackage[sort&compress,numbers,square]{natbib}

\usepackage{bm}

  \newcommand\vertarrowbox[3][6ex]{%
  \begin{array}[t]{@{}c@{}} #2 \\
  \left\uparrow\vcenter{\hrule height #1}\right.\kern-\nulldelimiterspace\\
  \makebox[0pt]{\scriptsize#3}
  \end{array}%
}
  \usepackage[shortlabels]{enumitem}
\let\olditem\item
\newlist{methods}{itemize}{1}
\setlist[methods]{%
    align=right,
    before=\changeitem,
    font=\bfseries,
    after=\let\item\olditem
}
\newcommand*{\changeitem}{%
    \renewcommand*{\item}[1][]{%
        \olditem[##1 :]
    }%
}
\theoremstyle{plain}
\newtheorem{theorem}{Theorem}
\newtheorem{corollary}[theorem]{Corollary}

\newtheorem{proposition}[theorem]{Proposition}

\theoremstyle{definition}
\newtheorem{definition}[theorem]{Definition}
\newtheorem{example}[theorem]{Example}
\newtheorem{remark}[theorem]{Remark}

\usepackage[mathscr]{euscript}
\usepackage{graphicx, color}
\graphicspath{{fig/}}
\newcommand{\ran}{\rangle}
\newcommand{\lan}{\langle}
\usepackage{algorithm, algpseudocode} 
\usepackage{mathrsfs} 

\newcommand\restr[2]{\ensuremath{\left.#1\right|_{#2}}}

\newcommand{\z}{\zeta}

\newcommand{\mv}{\mathcal{V}}

\newcommand{\til}{\Tilde}

\newcommand{\dl}{\partial}

\usepackage{bbm}
  \usepackage{nicematrix}
  \usepackage{pdfpages}

\setcounter{secnumdepth}{4}
\setcounter{tocdepth}{4}
\usepackage{lscape}
\usepackage{subfig}
\usepackage{float}
\usepackage{plain}
\usepackage{lipsum}
\newcommand{\rmspan}{\mathrm{span}}
\newcommand{\rank}{\mathrm{rank}}

\newcommand{\defeq}{\overset{\mathrm{def}}{=}}

\newcommand{\trans}{{}^t\!}

\newcommand{\Dcal}{ {\mathcal D}}

\newcommand{\Vcal}{ {\mathcal V}}

\newcommand{\Acal}{ {\mathcal A}}

\newcommand{\Ccal}{ {\mathcal C}}

\newcommand{\Pcal}{ {\mathcal P}}
\newcommand{\Qcal}{ {\mathcal Q}}
\newcommand{\Zcal}{ {\mathcal Z}}

\newcommand{\Gbb}{ {\mathbb G}}
\usepackage{verbatim}

\newcommand{\Card}{{\rm Card}}

\newcommand{\R}{{\mathbb R}}

\newcommand{\N}{{\mathbb N}}
\newcommand{\und}{\frac{1}{2}}


\newcommand{\tendsto}[2]{\xrightarrow[{#1} \rightarrow {#2}]{}}%
\renewcommand{\leq}{\leqslant}
\renewcommand{\geq}{\geqslant}

\newcommand{\bfr}{\bm{r}}
\newcommand{\bfu}{\bm{u}}
\newcommand{\bfQ}{\bm{Q}}

\newcommand{\dvolsr}{\mathrm{dvol}_{\mathrm{SR}}}
\newcommand{\dvol}{\mathrm{dvol}}


\title{Approximating sub-Riemannian structures by Riemannian metrics and spectral convergence}
\author{L\'eo HARAKEH\footnote{Laboratoire de Math\' ematiques Jean Leray, Universit\'e de Nantes-CNRS, (\texttt{Leo.Harakeh@univ-nantes.fr}),
    also known under the name Mohammad Harakeh.} \and Luc HILLAIRET\footnote{Institut Denis Poisson, Universit\'e d'Orl\'eans-Université de Tours-CNRS, France (\texttt{luc.hillairet@univ-orleans.fr}).}}

\begin{document}
	\maketitle
	
	\begin{abstract}
          Sub-Riemannian structures can be seen as singular limits of Riemannian structures. Starting from a sub-Riemannian structure, we introduce
          an approximation scheme that is naturally associated with it. We prove that it induces a volume form in the limit that we compare to Popp's
          volume. We then prove that the spectrum of the family of Riemannian Laplacians associated to the approximation scheme converges to the
          spectrum of the sublaplacian.

       \noindent\textbf{Keywords:} sub-Riemannian geometry, Popp's volume, Riemannian approximation 
	\end{abstract}

	\tableofcontents
	\section{Introduction}
        A sub-Riemannian structure on a manifold $M$ can be locally defined by choosing vector fields $(X_i)_{i=1, \cdots d}$ and a
        metric defined on the subbundle $V$
        of the tangent bundle generated by the $X_i$'s. When the vector fields satisfy the so-called \textit{Hörmander's bracket generating condition}
        there is a distance associated with this structure (see section \ref{framework} and
        \cite{agrachev2019comprehensive, jean2012control, montgomery2002tour} for background on sub-Riemannian geometry). It is known that this
        distance can be constructed by considering a singular limit of Riemannian metrics. Roughly speaking, its suffices to
        construct a sequence of metrics along which the length of directions orthogonal to $V$ tends to $+\infty$. This process allows to address different
        sub-Riemannian quantities by looking at how the latter are approximated by the sequence of the corresponding Riemannian ones. In this paper,
        we will be particularly interested in spectral quantities and we will thus consider the sequence of associated Riemannian Laplace operators and
        study how it behaves under such a degeneration. Such an approach is not new: for instance, it has been used in \cite{Melrose_wave} to study
        the heat and wave kernels, and in \cite{Rizzi_Rossi2021} for the heat content, see also \cite{ge1993collapsing, rumin2000sub, AlbinQuan2022}.
        It should be noted that the two former references consider very general sub-Riemannian structures with a very general approximation scheme,
        whereas the latter ones restrict to contact setting and use an approximation scheme that retains some feature from the geometry.
        This is the approach we want to pursue here. We will construct an approximating sequence of Riemannian metrics that is, in some sense,
        tailored to the sub-Riemannian setting. In particular we will build the approximation scheme keeping in mind
        the dilations associated to the structure (see \cite{bellaiche1996sub, jean2012control}). Indeed, these dilations, and the concept of
        privileged coordinates that is closely related, are deeply related to the local structure and can be used e.g. to obtain asymptotics
        for the heat kernel (see \cite{colindeverdiere:hal-02535865}).
        This will lead us to the approximation scheme and the sequence of Riemannian metrics $(g^h)_{h>0}$ defined in section \ref{subsec:approx}.
        Several questions are then naturally
        associated with this sequence. The first one is to understand how the Riemannian volume degenerates when $h$ goes to $0$.
        Indeed, contrary to the Riemannian case,
        there is no such thing as a canonical sub-Riemannian volume
        \cite{montgomery2002tour, agrachev2012hausdorff,gromov1996carnot,mitchell1985carnot, GhezziJean2015}). Several natural choices can be made
        e.g. Hausdorff, spherical Hausdorff, or Popp's volume and it was interesting to determine whether our approximation picked up a specific volume
        in the limit.

        Our first result is the following theorem in which we use $d$ for the dimension of the manifold and $\bfQ$ for the Hausdorff dimension of
        the sub-Riemannian structure in the equiregular region (see section \ref{frmwrk} for a precise definition).

        \begin{theorem}\label{thm:intro_vol}
            Let $(g^h)_{h>0}$ be the family of Riemannian metrics defined in section \ref{subsec:approx}, then
            $h^{d-\bfQ} \dvol{g^h}$ converges to Popp's volume uniformly on any compact set of the equiregular region.
          \end{theorem}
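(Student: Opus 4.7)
The plan is to carry out a local computation in a frame adapted to the flag of distributions. Around any point $p$ in the equiregular region I choose a neighborhood $U$ on which a smooth frame $(E_1, \ldots, E_d)$ is defined with $(E_1, \ldots, E_{n_j})$ spanning $V_j$ at each point, and I let $w_k \in \{1, \ldots, r\}$ denote the weight of $E_k$ and $(\theta^1, \ldots, \theta^d)$ the dual coframe. Since $\bfQ = \sum_k w_k$, one has $d - \bfQ = -\sum_k (w_k - 1)$, so the factor $h^{d-\bfQ}$ is exactly what is needed to cancel a factor $h^{\sum_k(w_k - 1)}$ coming out of $\sqrt{\det g^h_{kl}}$.

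The first step is to read off from the construction in section \ref{subsec:approx} the asymptotics of the Gram matrix $g^h_{kl}(q) = g^h(E_k, E_l)(q)$. Because $(g^h)_h$ is built from the sub-Riemannian dilations $\delta_h$, I expect an expansion
\begin{equation*}
g^h_{kl}(q) \;=\; h^{(w_k - 1) + (w_l - 1)}\bigl( A_{kl}(q) + r_{kl}(q,h)\bigr),
\end{equation*}
where $A(q)$ is a smooth symmetric positive-definite matrix on $U$, block-diagonal with respect to the flag, and $r_{kl}(q,h) \to 0$ uniformly on compact subsets. Writing $D_h = \mathrm{diag}(h^{w_k - 1})$ and factoring $g^h_{kl} = (D_h(A+r)D_h)_{kl}$ yields
\begin{equation*}
\dvol{g^h} \;=\; h^{\bfQ - d}\bigl(\sqrt{\det A(q)} + o(1)\bigr)\, \theta^1 \wedge \cdots \wedge \theta^d,
\end{equation*}
locally uniformly in $q$, so that $h^{d-\bfQ}\dvol{g^h}$ admits a locally uniform limit.

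The second step is to identify this limit $\sqrt{\det A(p)}\,\theta^1 \wedge \cdots \wedge \theta^d$ with Popp's volume. By the block-diagonal structure, the $j$-th block of $A$ is an inner product on $V_j/V_{j-1}$, and by the dilation-adapted definition of $g^h$ this block should be exactly the inner product induced, via iterated Lie brackets, by the sub-Riemannian metric on $V_1$ — that is, the data that defines Popp's volume in the intrinsic formulation of Montgomery and Ghezzi--Jean. A direct comparison with the standard local formula for Popp's density in an adapted frame then completes the proof.

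The main obstacle is the proof of the block-diagonal structure of the leading matrix $A$, equivalently the claim that the cross-entries $g^h_{kl}$ with $w_k \ne w_l$ are genuinely $o\bigl(h^{(w_k-1)+(w_l-1)}\bigr)$. This is precisely what the compatibility of $g^h$ with the dilations $\delta_h$ is designed to guarantee, and it must be extracted from the explicit definition of the approximation scheme in section \ref{subsec:approx}; a careful bookkeeping of commutators of the chosen horizontal vector fields, together with the expression of $E_k$ for $w_k \geq 2$ as iterated brackets modulo lower-weight corrections, looks like the delicate technical point. A secondary and more routine issue is to check that the identification of each block with the graded Popp inner product does not introduce a stray combinatorial constant, so that the limit really is Popp's volume and not some other natural graded multiple of it (Hausdorff or spherical Hausdorff).
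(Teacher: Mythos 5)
Your overall strategy matches the paper's second approach (section \ref{rlnwzpopp}): work in an adapted frame, factor the Gram matrix as $D_h(A+r)D_h$, and identify $\det A$ with the Barilari--Rizzi expression. But the asymptotic you postulate for the Gram matrix has the wrong sign in the exponent and is false as written. The metric $g^h$ makes bracket directions \emph{long}: in the paper's Heisenberg example, $g^h(\partial_z,\partial_z)=1/(2h^2)$ at $x=0$, which blows up, whereas your formula $g^h_{kl}=h^{(w_k-1)+(w_l-1)}(A_{kl}+r_{kl})$ would make it vanish like $h^2$. The correct form is $g^h_{kl}=h^{-(w_k-1)-(w_l-1)}(A_{kl}+o(1))$, giving $\dvol g^h = h^{d-\bfQ}\bigl(\sqrt{\det A}+o(1)\bigr)\,|\theta^1\wedge\cdots\wedge\theta^d|$; the convergent quantity is therefore $h^{\bfQ-d}\dvol g^h$, as the body of the paper and all three worked examples confirm. (The exponent in the statement of Theorem \ref{thm:intro_vol} should read $\bfQ-d$; your compensating sign error happens to reproduce that misprint.)

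The substantive gap is the one you flag yourself: proving the leading block structure. Attempting this directly on $\Gbb_h$ is awkward because $\Gbb_h$ is defined through a degenerating infimum. The paper's technical lever is to compute the \emph{inverse} Gram matrix instead, for which there is a closed formula $\Gbb_h^{-1}=\sum_{i=0}^{\bfr}h^{2i}\,\trans A_i A_i$ with $A_i$ the coefficient matrix of the length-$(i+1)$ brackets (theorem \ref{Tgitoa}). In an adapted frame the last $d-n_i$ columns of $A_{(i)}$ vanish, which makes the block asymptotics of $\til\Gbb_h^{-1}$ transparent (equation \ref{expoftilg}); the factorization $\til\Gbb_h^{-1}=G_{(1)}(h)\bigl(I_d+G_{(1)}(h)^{-1}G_{(2)}(h)\bigr)$ with $G_{(1)}(h)$ block-diagonal is precisely what delivers your ``$r_{kl}\to 0$'' claim, since the off-diagonal blocks of $\til\Gbb_h^{-1}$ are $O(h^{2\max(i,j)})=o(h^{i+j})$ for $i\neq j$. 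Your second step is also not as routine as you suggest: the identification with Popp's formula (\ref{B_j's}) requires first showing that the leading $\Dcal_0$-block is the identity (proposition \ref{krononono}, which again uses the factorization), and then an explicit structure-constant computation relating the blocks $M_i$ to the matrices $B_i$. The outline is in the right spirit, but both steps lean on the formula for $\Gbb_h^{-1}$ that you do not have, and your postulated asymptotics must first be corrected.
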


          Popp's volume has been studied in \cite{barilari2013formula} in which a notion of adapted frame was introduced and formulas were given for Popp's
          volume in such an adapted frame. We observe that our approach gives an effective way of computing Popp's volume in any coordinate frame.

          Our second result will be the spectral convergence of the sequence of Laplace operators associated with the Riemannian metric $g_h$. Observe that,
          in order to define a sub-Riemannian Laplace operator, we have to choose a smooth volume on $M$. 

          \begin{theorem}\label{thm:intro_spectral_convergence}
            Assume $M$ is compact and the sub-Riemannian structure is equiregular then, for any $k$, the $k$-th eigenvalue of the
            Riemannian Laplace operator associated with $g_h$ converges, when $h$ tends to $0$, to the $k$-th eigenvalue of the
            sub-Riemannian Laplace operator associated with Popp's volume.
          \end{theorem}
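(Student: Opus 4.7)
The plan is to prove spectral convergence via the Courant--Fischer min-max principle applied to the Dirichlet forms
\begin{equation*}
q_h(u) = \int_M |du|^2_{g^{h,*}}\,\dvol_{g^h}, \qquad q_0(u) = \int_M |\nabla_H u|^2\,\dvol_{\mathrm{Popp}},
\end{equation*}
where $|\nabla_H u|^2 = \sum_{j=1}^r (X_j u)^2$ in any $g_{\mathrm{SR}}$-orthonormal horizontal frame $(X_1,\dots,X_r)$. The theorem reduces to a two-sided comparison of the Rayleigh quotients $R_h(u):=q_h(u)/\|u\|^2_{L^2(\dvol_{g^h})}$ and $R_0(u):=q_0(u)/\|u\|^2_{L^2(\dvol_{\mathrm{Popp}})}$, uniform enough to pass through the min-max.

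For the lower bound $\liminf_{h\to 0}\lambda_k(\Delta_{g^h}) \geq \lambda_k(\Delta_{\mathrm{Popp}})$, the key point is that $g^h$ agrees with the sub-Riemannian metric on the horizontal distribution $V$, so a $g^h$-orthonormal frame can be built by completing a $g_{\mathrm{SR}}$-orthonormal frame of $V$; this forces the pointwise inequality $|du|^2_{g^{h,*}} \geq |\nabla_H u|^2$. Compactness of $M$ combined with Theorem~\ref{thm:intro_vol} yields $\eta_h \to 0$ with $(1-\eta_h)\dvol_{\mathrm{Popp}} \leq h^{d-\bfQ}\dvol_{g^h} \leq (1+\eta_h)\dvol_{\mathrm{Popp}}$. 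Feeding both ingredients into the quotient gives $R_h(u) \geq \frac{1-\eta_h}{1+\eta_h}R_0(u)$ uniformly in $u$, and min-max yields the liminf bound.

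For the upper bound $\limsup_{h\to 0}\lambda_k(\Delta_{g^h}) \leq \lambda_k(\Delta_{\mathrm{Popp}})$, I take as test subspace $F_k = \mathrm{span}(\phi_1,\dots,\phi_k)$ spanned by the first $k$ eigenfunctions of $\Delta_{\mathrm{Popp}}$; these are smooth by Hörmander's hypoelliptic theorem applied to the sum-of-squares sub-Laplacian. In privileged coordinates with an adapted frame $(X_1,\dots,X_r,Y_{r+1},\dots,Y_d)$ of weights $w_1=\dots=w_r=1$, $w_i \geq 2$ for $i>r$, the construction of $g^h$ produces the local expansion
\begin{equation*}
|du|^2_{g^{h,*}} = |\nabla_H u|^2 + \sum_{i>r} h^{2(w_i-1)}(Y_i u)^2 + o(1) = |\nabla_H u|^2 + O(h^2)
\end{equation*}
uniformly for $u$ in the finite-dimensional $F_k$. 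Combined once more with the uniform volume asymptotics of Theorem~\ref{thm:intro_vol}, this gives $R_h(u) \to R_0(u)$ uniformly on $F_k$, and min-max closes the argument.

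The main obstacle is rigorously establishing the pointwise expansion above uniformly across the compact manifold. Within a single privileged chart the expansion is essentially a direct computation, but globalizing it requires a partition of unity together with uniform control of the error terms between charts; equiregularity is essential here, as it guarantees a globally constant weight structure and privileged frames of the same combinatorial type at every point, while Theorem~\ref{thm:intro_vol} supplies the uniform volume rescaling that allows the local estimates to be glued into a single global inequality. A secondary point to verify is that the min-max of $\Delta_{\mathrm{Popp}}$ is indeed realized by smooth test subspaces, which follows from density of $C^\infty(M)$ in the form domain of the sub-Laplacian via hypoellipticity.
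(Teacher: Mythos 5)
Your proposal is correct in outline but takes a genuinely different (and in some ways leaner) route than the paper. The paper first establishes a parameter-uniform subelliptic estimate $\|u\|_{H^{\epsilon+s}_\omega}\leq C(s)(\|\Delta_h u\|_{H^s_\omega}+\|u\|_{H^s_\omega})$ (Theorem \ref{sbellest}, proved in the appendix following Kohn/Helffer), uses it to extract a strongly convergent subsequence of eigenfunctions from any bounded sequence of eigenvalues (Theorem \ref{convofbddspec}), and then closes the argument with min-max; the passage from a fixed reference volume to the $h$-dependent Riemannian volume is handled last by a multiplicative comparison of Rayleigh quotients. You instead prove the two-sided estimate directly at the level of quadratic forms: a pointwise cometric inequality for the lower bound, and a finite-dimensional test space spanned by smooth sub-Laplacian eigenfunctions for the upper bound. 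This bypasses the uniform subelliptic estimate entirely, at the cost of obtaining only eigenvalue convergence rather than the strong $H^s_\omega$-convergence of eigenfunctions (and the estimate itself, which the paper also needs for the fixed-volume-form variant). Two details in your write-up should be corrected, though the conclusions survive. First, $g^h$ does \emph{not} agree with $g^0$ on the horizontal distribution: the paper's Proposition in Section \ref{subsec:approx} only gives $\restr{g^h_m}{\Dcal_0}\leq\restr{g^0_m}{\Dcal_0}$, with possible strict inequality (a higher bracket $X^{ij}$ may happen to be horizontal at some points), so the ``complete an orthonormal frame'' argument has a gap; nevertheless $g^h\leq g^0$ on all of $TM$ directly yields $g^{h,*}\geq g^{0,*}$ on $T^*M$ by duality, which is all you use. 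Second, the privileged-coordinate expansion and partition-of-unity concerns for the upper bound are unnecessary: since $g^h$ is defined by pushing forward the diagonal form $|\delta_h\bfu|^2_N$ through $\sigma_m$, the dual cometric has the exact global formula $g^{h,*}(du)=\sum_{i=0}^{\bfr}\sum_{j=1}^{N_i}h^{2i}(X^{ij}u)^2$, so $g^{h,*}(du)=|\nabla_H u|^2+O(h^2)$ holds pointwise on $M$ and uniformly on the unit sphere of any fixed finite-dimensional space of smooth functions, with no gluing required.
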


          The assumption of equiregularity is needed to ensure that Popp's volume is smooth on $M$. Indeed when it is not satisfied, the volume
          that is associated to our approximating sequence blows up near the singular region. It follows that before
          studying its spectral properties, already studying
          the self-adjointness of the sub-Riemannian Laplace operator needs to be addressed (see \cite{boscain2013laplace,prandi2018quantum,
            franceschi2020essential} for related questions). Deciding whether our approximation scheme picks up a particular extension
          in the limit and studying the spectral convergence in this setting would be an interesting question.
          
          The paper is organised as follows. After introducing the framework in section \ref{framework}, we introduce the approximation scheme.
          In section \ref{mstrbombastiv}, we study the volume form $\text{dvol}g^h$ and prove that it induces in the limit, a volume form,
          that is smooth on every connected component of the equiregular region and blows up on the singular one.
          We then compare this volume form to the Popp's volume in subsection \ref{rlnwzpopp} and prove theorem \ref{thm:intro_vol}.
          Finally, in section \ref{kitrez}, we study the convergence of spectrum of the family $\Delta_h$ and we prove a spectral convergence result
          in two settings. The first case is when the operators are defined with respect to a fixed volume form. No equiregularity assumption is needed
          but the coresponding operator at $h>0$ is not the Laplace operator associated with $g_h$. The second case is for the sequence of Laplace
          operators associated with $g_h$ in the equiregular region. The resulting theorem is \ref{thm:intro_spectral_convergence}.
          Both results rely on a uniform subelliptic estimate that we prove in the appendix, building upon a classical strategy (see
          \cite{kohn73, helffer20052}).   
          
          \subsection*{Acknowledgments :} We have benefitted from numerous discussions with several colleagues. We are grateful to
          Frédéric Jean and Emmanuel Trélat for sharing their insight on sub-Riemannian geometry. Both authors acknowledge the support of the
          ANR program ADYCT (grant ANR-20-CE40-0017).

     \section{General setting and notations}\label{framework}
      For any $n\in\mathbb N^*$, for any $\bfu=(u_1,...,u_n)\in\mathbb R^n$, we denote by $|\bfu|_n$ the Euclidean norm on $\R^n:~{|\bfu|_n^2=\sum\limits_{i=1}^nu_i^2}$. For any vector field $X,$ we will use both notations $X_m$ and $X(m).$ For a matrix $A, \trans A$ will denote the transposed matrix of $A.$
      \subsection{Framework}\label{frmwrk}
      We first recall some definitions from sub-Riemannian geometry. We refer to \cite{jean2012control,montgomery2002tour,agrachev2012hausdorff,
        agrachev2019comprehensive} for a more complete treatment.\\

      Let $M$ be a smooth orientable connected manifold of dimension $d$. We consider $N_0$ smooth vector fields $\Ccal_0=\{X^{01},...,X^{0N_0}\}$, and we define the distributions 
     \begin{align*}
       &~\Dcal_0 \,=\,\rmspan \left\{ X^{01},\cdots,\, X^{0N_0} \right\}\\
       \forall i\geq 0,& ~\Dcal_{i+1}\,=\,\Dcal_i\,+\,[\Dcal_0,\Dcal_i]. 
     \end{align*}
     H\"ormander's bracket condition at point $m$ stipulates that there exists a smallest integer $r(m)$ such that ${\Dcal_{r(m)} \,=\,T_m M}$. With this definition,
     we observe that the step of the sub-Riemannian structure at $m$ is $r(m)+1$. We assume that $r(m)$ admits a maximum on $M$ that we denote by $\bfr$;
     i.e
     \[
       \bfr=\mathrm{max}\{r(m); m\in M\}.
     \]
     This assumption will always be made througout the paper. When the manifold is compact, it is equivalent to ask that H\"ormander's condition
     holds at every point (by upper-semicontinuity of $\bfr(m)$).

     We denote by $(n_i(m))_{i=0,\cdots,\bfr}$ the so-called growth vector at $m$:
     \[
      \forall m\in M,~\forall i=0,\cdots \bfr,~~n_i(m) \defeq \dim \Dcal_i(m). 
     \]

     A point $m\in M$ is called equiregular if the growth vector is locally constant near $M$. Any other point is called singular.
     The manifold can be partitioned into the singular points $\mathcal{Z}$ and the equiregular region $M\setminus \mathcal{Z}$.
     The structure is called equiregular if $\mathcal{Z}$ is empty.
     
     \begin{remark}
       It is common, in sub-Riemannian geometry, to start with the distribution $\Dcal_0$ rather than with vector fields that generate it.
       We observe here that our construction will depend on the choice of these vector fields and not solely on the distribution.
       Using partitions of unity and the fact that, locally,
       we can always find $N_0$ vector fields that generates $\Dcal_0$, we can always construct the set $\Ccal_0$.
       Observe however, that, since the vector fields $(X^{0i})$ are globally defined, $N_0$ will, in general, be much greater that $n_0(m)$. 
     \end{remark}
     
     We denote by $g^0$ the sub-Riemannian metric associated with the set of vector fields $\Ccal_0$: for any $m\in M$ and $X_m\in T_mM$, we have 
     \begin{equation}\label{g^inf}
         \begin{split}
             g^{0}_m(X_m) \defeq \inf\left\{|\bfu|_{N_0}^2;\bfu\in\mathbb R^{N_0},\sum_{i=1}^{{N_0}}u_iX^{0i}_m=X_m\right\}.
         \end{split}
       \end{equation}
       We use the convention that the infimum over $\emptyset$ is $ +\infty$ so that $g^0_m(X_m)=+\infty$ whenever $X_m\notin \Dcal_0(m)$.

       For any absolutely continuous curve $\gamma:~[0,1]\rightarrow M$, we can define the length
       \[
         \ell_{SR}(\gamma) \defeq \int_0^1 \left(g^0_{\gamma(t)}(\dot{\gamma}(t))\right)^{\und}\, dt.
       \]

       Using Chow-Rashevski's theorem, it is proved that the following formula defines a distance on $M$:
       \[
         \forall x,y\in M,~~d^0(x,y)\defeq \inf \left \{ \ell_{SR}(\gamma),~~\gamma~\text{is absolutely continuous},~\gamma(0)=x,~\gamma(1)=y ~\right \},
       \]
       see \cite{agrachev2019comprehensive,jean2012control}.
       
       \subsection{Enumerating the brackets}
       First, it will be convenient to identify $\Ccal_0$ with the set $\{1,\cdots,N_0\}$
       so that we will slightly abuse notations and see $\Ccal_0$ both as a set of vector fields and as a set of indices.
     Let $k\in \N$ and $I$ a multiindex in the $k$-fold product $\Ccal_0^k$:
\[
  I\defeq (i_1,\cdots, i_k), 
\]
we denote by $X_I$ the corresponding bracket of length $k$:
\[
  X_I \,\defeq\, \left[X^{0i_1},\left[ X^{0i_2} ,\left[ \cdots [X^{0i_{k-1}},X^{0i_k}]\right]\cdots \right]\right].
\]
For any $k\geq 1$, we define
\[
  \Ccal_k \,\defeq\, \left\{ I\in \Ccal_0^{k+1},~~\exists m\in M,~(X_I)_m \neq 0 \right\}. 
\]
We now set $N_k\defeq \Card \,\Ccal_k$ and fix some enumeration of $\{1,\cdots N_k\}$ of $\Ccal_k$. Each index $j$ corresponds to a (unique) multiindex $I_j$ and
we write, for $i\geq 0$, $X^{ij}\,=\,X_{I_j}$, in which $I_j$ is of length $i+1$ (in $\Ccal_i)$.

\begin{remark}
  We insist that, except for $k=0$, it is important to see $\Ccal_k$ as a set of indices and not as a set of vector fields. Indeed, the correspondence between
  $(j,k)$ and $X^{kj}$ is, in general, not injective.
\end{remark}

Summarizing the notation: for any $i,j$, $X^{ij}$ is the $j$-th bracket of length $i$ in our chosen enumeration. 

By construction, we have
\[
  \forall m\in M,~\forall k\leq \bfr,~~\mathcal{D}_k(m) = \operatorname{span}\{\, X^{ij}_m \;:\; 0 \le i \le k,\ 1 \le j \le N_i \,\}.
\]
In particular, using the uniform H\"{o}rmander's condition and the definition of $M$, we have
\begin{equation}
        \label{hc} \forall m\in M,~~T_m M = \operatorname{span}\{\, X^{ij}_m \;:\; 0 \le i \le \mathbf r,\ 1 \le j \le N_i \,\}.
\end{equation}


   

\subsection{Approximation scheme}\label{subsec:approx}
In this section, we define a Riemannian approximation scheme that is tailored to the sub-Riemannian structure in the sense that is compatible with
 the natural associated dilations (see \cite{agrachev1989local,colindeverdiere:hal-02535865}). The construction is also reminiscent of the results by Nagel-Stein-Wainger (see \cite{nagel1985balls}) and of \cite{capogna2016regularity}.

Let $N=\sum\limits_{i=0}^{\bfr} N_i$. For $\bfu\in\mathbb R^N$, we write $\bfu=(\hat{u}_0,\hat{u}_1,...,\hat{u}_{\bfr}),$ where each $\hat{u}_i$ is of length $N_i$ so that

\[
  \forall i=0,\cdots \bfr,~~\hat{u}_i=(u_{i1},\cdots, u_{iN_i}).
\]
We will use both notations :

\[
  \bfu \,=\, (\hat{u}_0,\cdots,\hat{u}_{\bfr})\,=\,(u_{ij})_{\substack{0\leq i\leq \bfr \\ 1\leq j\leq N_i}}
\]

For $m\in M$, let $\sigma_m$ be the map $\sigma_m:\mathbb{R}^N\rightarrow T_mM,$ such that for $\bfu=(u_{ij})_{\substack{0\leq i\leq \bfr \\ 1\leq j\leq N_i}}$, 
     \begin{equation}
         \label{sigma}\sigma_m(\bfu)=\sum_{i=0}^{\bfr} \sum_{j=1}^{N_i}u_{ij}(m)X_m^{ij}.
       \end{equation}
       Using (\ref{hc}), for any $m$, $\sigma_m$ is surjective. In the language of bundles, we have defined a surjective bundle morphism from
       $M\times \R^N$ onto $TM$.
       
For all $h>0$, we define the (anisotropic) dilation $\delta_h$ as the endomorphism of $\R^N$ defined by:

$$
\forall \bfu\in \R^N,~~\delta_h(\bfu)=(\hat{u}_0,h^{-1}\hat{u}_1,h^{-2}\hat{u}_2,...,h^{-\bfr}\hat{u}_{\bfr}).
$$  
      
For $m\in M$, and $h>0$, we define the family of Riemannian metrics $g^h_m$ on $T_mM$ by
\begin{equation}\label{g^s}
\forall X_m\in T_mM,~ g_m^h(X_m)\,\defeq\,\inf \left\{ |\delta_h\bfu|_N^2;\, \bfu\in\mathbb{R}^N,\sigma_m(\bfu)=X_m \right\}. 
\end{equation}

\begin{remark}
  In order to define a Riemannian metric using the preceding formulas, we actually do not need to take into account all
  the vectors in the enumeration $(\mathcal{C}_k)$, it suffices to take a family that locally generates the tangent space.
  In \cite{AMY}, very general structures defined by vector fields are constructed. Our construction would associate a natural family of
  metrics. It would be interesting to see how they fit in their framework. 
\end{remark}

Let $q_h$ be the quadratic form that is defined on $\R^N$ by 
$q_h(\bfu)\,=\,|\delta_h\bfu|_N^2$. This quadratic form is definite positive
so it corresponds to a Euclidean scalar product. We denote by $\perp_h$
the corresponding orthogonal complement operation, and by $\tilde{\sigma}_m$
the restriction of $\sigma_m$ to $(\ker \sigma_m)^{\perp_h}$. By construction,
$\tilde{\sigma}_m$ is an isomorphism onto $T_mM$. A straightforward computation based
on Pythagoras' theorem shows that
\[
  g_m^h(X_m)\,=\,q_h\left((\tilde{\sigma}_m)^{-1}(X_m)\right).
\]
This shows that, for any fixed $m$, $g_m^h$ is a Euclidean quadratic form.
The smoothness in $m$ will follow from the computations in theorem \ref {Tgitoa}, so that $g^h$ is indeed a Riemannian metric.

\begin{proposition}
    Under the previous assumptions, the following assertions hold true.
   \begin{enumerate}
     \item The function $h\mapsto g^h$ is monotonically decreasing for any $h>0$.
      \item Fix $m\in M$. Then, $\restr{g^h_m}{\Dcal_0}\,\leq\,\restr{g^0_m}{\Dcal_0}$.
    \item  Fix $m\in M$. Then we have $\lim\limits_{h\rightarrow 0}\restr{g^h_m}{\Dcal_0}\,=\,\restr{g^0_m}{\Dcal_0}.$
 \end{enumerate}  
 \end{proposition}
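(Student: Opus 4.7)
I would handle the three assertions in order, all three flowing directly from the explicit expression
$$|\delta_h\bfu|_N^2 \,=\, |\hat u_0|_{N_0}^2 + h^{-2}|\hat u_1|_{N_1}^2 + \cdots + h^{-2\bfr}|\hat u_{\bfr}|_{N_{\bfr}}^2.$$

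For (1), the admissible set $\sigma_m^{-1}(X_m)$ in the infimum (\ref{g^s}) does not depend on $h$, whereas each coefficient $h^{-2i}$ is non-increasing in $h$. Hence $h \mapsto |\delta_h\bfu|_N^2$ is non-increasing pointwise in $\bfu$, and the same property is inherited by the infimum over this fixed set. For (2), given $X_m \in \Dcal_0(m)$ and $\bfu_0 \in \R^{N_0}$ realising $\sum_j u_{0j}\,X^{0j}_m = X_m$, the lift $\bfu = (\bfu_0, 0, \ldots, 0) \in \R^N$ satisfies $\sigma_m(\bfu) = X_m$ and its higher blocks, being zero, are unaffected by $\delta_h$; thus $|\delta_h\bfu|_N^2 = |\bfu_0|_{N_0}^2$. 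Infimising over such $\bfu_0$ in definition (\ref{g^inf}) of $g^0_m$ then yields $g^h_m(X_m) \leq g^0_m(X_m)$.

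Claim (3) is the only step that demands real work. Monotonicity (1) together with the upper bound (2) already guarantee that $\lim_{h\to 0} g^h_m(X_m)$ exists and is $\leq g^0_m(X_m)$; what remains is the reverse inequality. I would select, for each $h$, a minimiser $\bfu^h$ of (\ref{g^s}); existence is automatic, since we minimise a strictly convex coercive quadratic form $q_h$ on the non-empty affine subspace $\sigma_m^{-1}(X_m)$. The inequality $|\delta_h\bfu^h|_N^2 \leq g^0_m(X_m)$ then splits into the uniform bound $|\hat u_0^h|_{N_0}^2 \leq g^0_m(X_m)$ together with the crucial decay estimate $|\hat u_i^h|_{N_i}^2 \leq h^{2i}\, g^0_m(X_m)$ for every $i \geq 1$. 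Extracting a convergent subsequence $\hat u_0^h \to \hat u_0^*$ and letting $h \to 0$ in the identity $\sigma_m(\bfu^h) = X_m$, the higher-order contributions vanish by the decay, leaving $\sum_j u_{0j}^*\, X^{0j}_m = X_m$; so $\hat u_0^*$ is admissible in (\ref{g^inf}), and $g^0_m(X_m) \leq |\hat u_0^*|_{N_0}^2 = \lim|\hat u_0^h|_{N_0}^2 \leq \lim g^h_m(X_m)$ closes the loop.

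The only subtlety — and the point I would emphasise in the write-up — is the decay rate $|\hat u_i^h|_{N_i} \leq h^i\sqrt{g^0_m(X_m)}$. This rate is precisely what is needed to ensure that the blocks $\hat u_i^h X^{ij}_m$ for $i \geq 1$ do not contribute in the limit of $\sigma_m(\bfu^h)$, so that no $\Dcal_0$-mass is lost and the extracted limit $\hat u_0^*$ genuinely witnesses $g^0_m(X_m)$. Everything is pointwise in $m$, so no compactness or equiregularity is needed.
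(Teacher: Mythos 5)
Your proof is correct and follows essentially the same route as the paper: monotonicity of the weights, the inclusion of admissible lifts for claim (2), and for claim (3) picking a minimiser, extracting the $h^{2i}$ decay of the higher blocks, and passing to the limit in $\sigma_m(\bfu^h)=X_m$. You are in fact slightly more careful than the paper at two points the authors leave implicit, namely the existence of the minimiser $\bfu^h$ (via strict convexity and coercivity of $q_h$ on the affine fibre) and the extraction of a convergent subsequence of $\hat u_0^h$ before identifying the limit as an admissible competitor in the definition of $g^0_m$.
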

 \begin{proof}
     \begin{enumerate}
     \item let $(m,X_m)\in TM$ and fix $\bfu=(\hat{u}_0,...,\hat{u}_{\bfr})\in\mathbb R^N$ such that $\sigma_m(\bfu)=X_m$.
\[
\forall 0<h_1<h_2,~g_m^{h_2}(X_m)\leq |\delta_{h_2} \bfu|_N^2=\sum_{i=0}^{\bfr} h_2^{-2i}|\hat{u}_i|^2<\sum_{i=0}^{\bfr} h_1^{-2i}|\hat{u}_i|^2=|\delta_{h_1} \bfu|_N^2.
\] 
Taking the infimum over all $\bfu\in\mathbb R^N$ satisfying $\sigma_m(\bfu)=X_m$, we get that, for all $0<h_1<h_2$,
\[g^{h_2}_m(X_m)\,\leq\,g^{h_1}_m(X_m).
\]
         \item  We can see this by observing that, for a fixed $m\in M$ and $X_m\in \Dcal_0$, we have 
\begin{equation*}
         g^0_m(X_m)=\inf\left\{|\delta_h \bfu|_N^2; \bfu=(\hat{u}_0,0,...,0)\in\mathbb R^N, \hat{u}_0\in\mathbb R^{N_0}, \sigma_m(\bfu)=X_m\right\},
 \end{equation*} 
 and that 
$$\left\{|\delta_h \bfu|_N^2; \bfu=(\hat{u}_0,0,...,0)\in\mathbb R^N, \hat{u}_0\in\mathbb R^{N_0}, \sigma_m(\bfu)=X_m\right\}\subset\left\{|\delta_h\bfu|_N^2; \bfu\in\mathbb R^N,\sigma_m(\bfu)=X_m\right\}.$$
 \item  Fix $m\in M$ and $X_m\in \Dcal_0$. For any $h>0$, there exists $\bfu(h)=(\hat{u}_0(h),...,\hat{u}_r(h))\in\mathbb R^N$ such that $\sigma_m(\bfu(h))=X_m$ and 
\begin{equation*}
  \label{e5rpropp} g^h_m(X_m)=|\bfu(h)|_N^2=|\hat{u}_0(h)|_{{N_0}}^2\,+\,
  \sum_{i=1}^ {\bfr} h^{-2i}|\hat{u}_i(h)|_{N_i}^2\leq g^0_m(X_m).
 \end{equation*} 
 So, we get that $\lim_{h\rightarrow0} \hat{u}_i(h)=0$ for any $i=1,...,{\bfr}$. Thus, we have 
 $$X_m=\sigma_m(\bfu(h))=\lim_{h\rightarrow0}\sigma_m(\bfu(h))=\sigma_m((\hat{u}_0(0),0,...,0)).$$
 Finally, we have that
 $$|\hat{u}_0(h)|_{N_0}^2\leq |\hat{u}_0(h)|_{{N_0}}^2\,+\,
\sum_{i=1}^{\bfr} h^{-2i}|\hat{u}_i(h)|_{N_i}^2=g^h_m(X_m)\leq g^0_m(X_m).
$$
Therefore, as $h\rightarrow0$, and by definition of $g^0$, we get that
 $$g^0_m(X_m)\leq \lim_{h\rightarrow0} g^h_m(X_m)\leq g^0_m(X_m).$$
 We conclude.
     \end{enumerate}
 \end{proof}

 The fact that the Riemannian distance associated with $g^h$ converges to the sub-Riemannian distance $d^0$ is
 a standard fact in sub-Riemannian geometry (see \cite{agrachev2001subanalyticity}\cite{ge1993collapsing}\cite{gromov1999metric}\cite{varopoulos1990small}). We thus obtain the following theorem. 

\begin{theorem} 
  Let $d^h$ be the distance associated to the Riemannian metric $g^h$ (following the definition of $d^0)$, and $d^0$ the sub-Riemannian distance defined in section \ref{frmwrk}.
  When $h$ goes to $0$, the family $(d^h)_{h>0}$ converges to $d^0$ uniformly on every compact set of $M\times M$.
\end{theorem}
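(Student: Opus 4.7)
The plan is to establish pointwise convergence $d^h(x,y)\to d^0(x,y)$ through matching upper and lower bounds, and then upgrade it to uniform convergence on compact subsets of $M\times M$ by invoking Dini's theorem: $h\mapsto d^h$ is monotonically non-increasing (a direct consequence of item (1) of the preceding proposition, which gives $g^{h_1}\geq g^{h_2}$ whenever $h_1<h_2$), each $d^h$ is continuous, and the limiting $d^0$ is continuous under H\"ormander's condition (a standard consequence of Chow--Rashevski).

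The upper bound $\limsup_{h\to 0} d^h(x,y)\leq d^0(x,y)$ is immediate from items (1)--(2) of the proposition: along any absolutely continuous horizontal curve $\gamma$ joining $x$ to $y$ one has $g^h(\dot\gamma)\leq g^0(\dot\gamma)$ almost everywhere, so $\ell^h(\gamma)\leq \ell_{SR}(\gamma)$, and taking the infimum yields $d^h\leq d^0$ for every $h$.

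The harder direction, and the main obstacle, is $\liminf_{h\to 0} d^h(x,y)\geq d^0(x,y)$. I would fix a sequence $h_n\downarrow 0$ and select constant-speed Riemannian quasi-minimizers $\gamma^n:[0,1]\to M$ with $\sqrt{g^{h_n}(\dot\gamma^n)}\equiv L_n$ and $L_n\leq d^{h_n}(x,y)+\tfrac{1}{n}\leq d^0(x,y)+1$. Writing $\dot\gamma^n(t)=\sigma_{\gamma^n(t)}(\bfu^n(t))$ with $\bfu^n(t)\in(\ker\sigma_{\gamma^n(t)})^{\perp_{h_n}}$, we obtain the uniform energy bound
\[
\int_0^1 q_{h_n}(\bfu^n(t))\,dt \,=\, L_n^2 \,\leq\, (d^0(x,y)+1)^2.
\]
Because $|\delta_h\bfu|_N\geq |\bfu|_N$ for $h\leq 1$ and $\sigma_m$ is surjective, a simple compactness argument provides a fixed auxiliary Riemannian metric $g_{\mathrm{bg}}$ and a constant $c>0$ such that $g^h\geq c\,g_{\mathrm{bg}}$ on a neighbourhood of a large enough $d^0$-ball, uniformly for $h\leq 1$. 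The curves $\gamma^n$ are therefore equi-Lipschitz in $g_{\mathrm{bg}}$ and stay in a common compact set, so Arzel\`a--Ascoli extracts a uniformly convergent subsequence $\gamma^n\to \gamma$.

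The remaining step is to identify $\gamma$ as a horizontal curve whose sub-Riemannian length controls $\liminf_n L_n$. The energy bound $\int_0^1\sum_{i\geq 1} h_n^{-2i}|\hat u_i^n|_{N_i}^2\,dt\leq L_n^2$ forces $\hat u_i^n\to 0$ strongly in $L^2([0,1])$ for each $i\geq 1$, while $(\hat u_0^n)$ is bounded in $L^2$ and admits, up to subsequence, a weak limit $\hat u_0^\infty$. Passing to the limit in the local integral identity $\gamma^n(t)=\gamma^n(0)+\int_0^t \sigma_{\gamma^n(s)}(\bfu^n(s))\,ds$ (in charts, using uniform convergence of $\gamma^n$, strong vanishing of $\hat u_i^n$ for $i\geq 1$, and weak convergence of $\hat u_0^n$) yields $\dot\gamma(t)=\sigma_{\gamma(t)}(\hat u_0^\infty(t),0,\dots,0)$, so $\gamma$ is horizontal and $g^0(\dot\gamma(t))\leq |\hat u_0^\infty(t)|_{N_0}^2$. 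Lower semicontinuity under weak convergence combined with Cauchy--Schwarz then gives
\[
\ell_{SR}(\gamma)^2 \,\leq\, \int_0^1 g^0(\dot\gamma)\,dt \,\leq\, \int_0^1 |\hat u_0^\infty|_{N_0}^2\,dt \,\leq\, \liminf_n \int_0^1 q_{h_n}(\bfu^n)\,dt \,=\, \liminf_n L_n^2,
\]
hence $d^0(x,y)\leq \ell_{SR}(\gamma)\leq \liminf_n d^{h_n}(x,y)$. Combined with the upper bound this gives pointwise convergence, and Dini's theorem then concludes. The delicate point throughout is the identification of the limit curve as horizontal, which is precisely where the anisotropic dilation $\delta_h$ plays its decisive role.
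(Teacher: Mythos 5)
The paper does not prove this theorem: it states that the convergence of $d^h$ to $d^0$ is ``a standard fact in sub-Riemannian geometry'' and points to the references \cite{agrachev2001subanalyticity,ge1993collapsing,gromov1999metric,varopoulos1990small}. Your write-up fills that gap with a complete and essentially correct rendition of the standard $\Gamma$-convergence/compactness argument: the easy inequality $d^h\leq d^0$ from comparing $g^h$ and $g^0$ on horizontal curves, the hard inequality from a uniform energy bound, equi-Lipschitz estimate against the fixed metric $g^1$ (indeed $g^h\geq g^1$ for $h\leq 1$), Arzel\`a--Ascoli, strong $L^2$ vanishing of the bracket components $\hat u_i^n$ for $i\geq 1$ forced by the $h_n^{-2i}$ weights, weak $L^2$ compactness of $\hat u_0^n$, identification of the limit as a horizontal curve via the integral form of the ODE, and weak lower semicontinuity to close the estimate. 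The Dini upgrade to uniform convergence on compacta also works because $d^h$ is monotone in $h$, each $d^h$ is continuous, and $d^0$ is continuous by Chow--Rashevski.

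Two small caveats worth making explicit if this were to be written into the paper. First, the compact containment of the minimizing sequence $\gamma^n$ uses that $g^1$-balls of radius $d^0(x,y)+1$ around $x$ are relatively compact; this is automatic if $M$ is compact or if $g^1$ is complete, but some completeness hypothesis is silently required in the non-compact case (the cited references handle this). Second, the anisotropic scaling $\delta_h$ is precisely what makes the argument work here: the uniform energy bound gives $\int|\hat u_i^n|^2\leq C h_n^{2i}\to 0$ for $i\geq 1$, which is the quantitative statement that the approximating geodesics become horizontal in the limit. Your observation that this is ``the decisive role'' of $\delta_h$ is correct and is exactly the mechanism the paper exploits elsewhere (e.g.\ in the volume computation).
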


\begin{remark}
  In \cite{nagel1985balls} various families of distances are defined that also implement the fact that vectors may scale differently with
  respect to $h$. These are closely related to our setting. One difference is that we use a Riemannian approach where the authors of
  \cite{nagel1985balls} adopt a more control-theoretic point of view.
\end{remark}

One major difference between sub-Riemannian and Riemannian geometry, is the lack, in the former setting of a natural canonical volume form.
Our main interest in the sequel will be to determine whether a particular volume is associated to the family $(g^h)_{h>0}$. 
It should be noted that this volume will a priori depend on the choice of the initial vector fields $\Ccal_0$. 
  
\section{The Induced Volume Form $\mathcal{P}_o$}\label{mstrbombastiv}
In this section, we prove that the previous approximation scheme induces a volume form on $M$ that we compare to Popp's volume. Thus, 
we consider the volume form $\text{dvol}g^h$ and study its behaviour as $h\rightarrow0$. In this section, everything will be purely local 
and we will work near a point $m$ and in a local frame $(Z_1,...,Z_d)$ of $TM$. 

In this frame, the volume form $\text{dvol}g^h$ is given by 
\begin{equation}
    \label{genvolf} \text{dvol}g^h=\sqrt{|\text{det}(\Gbb_h)|}|\nu_1\wedge...\wedge \nu_d|,
\end{equation} 
where $\Gbb_h$ is the representation matrix of $g^h$ in $(Z_1,...,Z_d)$ and $(\nu_1,...,\nu_d)$ is the dual basis to $(Z_1,...,Z_d)$. 
From this expression, our study will be aimed at the determinant of the matrix $\Gbb_h$. 

\subsection{Expression Of $\Gbb_h^{-1}$}\label{Expression Of G_h^{-1}}
Fix $m$ and the local frame $(Z_1,...,Z_d)$. For $0\leq i\leq r$, denote by $A_i$ the matrix defined such that the 
$j$-$th$ row of $A_i$ is the coefficients of $X^{ij}$ in this frame. Thus the matrix $A_i$ has $N_i$ rows and $d$ columns. 

Denote by $\Gbb_h$ the representation Gram matrix of $g^h$ in this frame. \\ 
The ${A_i}'s$ and $\Gbb_h$ depend on the point $m$ and on the frame. This dependence will be made explicit when needed.
 
\begin{theorem}\label{Tgitoa}
  For all $h>0$, 
we have 

\begin{equation}\label{GitoA}
     \Gbb_h^{-1}=\sum_{i=0}^{r}h^{2i}\,\trans A_i A_i,
     \end{equation} where the matrices $\Gbb_h^{-1}$ and $A_i$ for $i=0,...,r$ have been defined above.
\end{theorem}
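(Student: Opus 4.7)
The plan is to set up matrix representations for everything and recognize the content of \eqref{g^s} as a constrained quadratic minimization whose value has a standard closed form.

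First, I would express $\sigma_m$ as a $d\times N$ matrix $B$ in the chosen frame $(Z_1,\dots,Z_d)$. By the definition of $A_i$, the vector $X^{ij}$ has coordinate column ${}^t\!(A_i)_{j\cdot}$ in this frame, so the block of $B$ corresponding to the index range of $\hat u_i$ is exactly ${}^t\! A_i$. Concatenating,
\[
B = \bigl({}^t\! A_0 \mid {}^t\! A_1 \mid \cdots \mid {}^t\! A_{\mathbf r}\bigr),
\]
and $\sigma_m(\bfu)$ reads as $B\bfu$ in these coordinates. The quadratic form $q_h(\bfu)=|\delta_h\bfu|_N^2$ has Gram matrix $\delta_h^2$, block-diagonal with blocks $h^{-2i} I_{N_i}$.

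Next, since $g_m^h(X_m)=\inf\{q_h(\bfu):B\bfu=X_m\}$ and $q_h$ is a positive definite quadratic form, this is a textbook constrained minimization. Either by Lagrange multipliers on $\bfu\mapsto \bfu^T \delta_h^2 \bfu$ under the constraint $B\bfu=X_m$, or equivalently by observing (as already noted in the excerpt through $\tilde{\sigma}_m$) that the minimizer lies in $(\ker\sigma_m)^{\perp_h}$ and that $\tilde\sigma_m$ is an isomorphism, one obtains the familiar Schur-type identity
\[
g_m^h(X_m)\,=\,X_m^{T}\bigl(B\,\delta_h^{-2}\,B^{T}\bigr)^{-1} X_m,
\]
where $X_m$ is identified with its coordinate column in $(Z_1,\dots,Z_d)$. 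Hence $\Gbb_h=(B\delta_h^{-2}B^T)^{-1}$, i.e.\
\[
\Gbb_h^{-1} \,=\, B\,\delta_h^{-2}\,B^{T}.
\]

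Finally, I would expand the block product. With the block decomposition of $B$ above and $\delta_h^{-2}=\mathrm{diag}(h^{2i} I_{N_i})$, the block product collapses into the sum
\[
B\,\delta_h^{-2}\,B^{T}
\,=\,\sum_{i=0}^{\mathbf r} h^{2i}\,({}^t\!A_i)\,(\,{}^t\!A_i)^{T}
\,=\,\sum_{i=0}^{\mathbf r} h^{2i}\,{}^t\!A_i\,A_i,
\]
which is precisely \eqref{GitoA}. Smoothness in $m$ of $\Gbb_h$ follows because each $A_i$ is smooth in $m$ (vector fields are smooth) and $B\delta_h^{-2}B^T$ is invertible for every $h>0$ by surjectivity of $\sigma_m$, so its inverse is smooth as well.

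There is no real obstacle here: the only step that requires a small check is the passage from the variational definition to the closed-form expression $X_m^T(B\delta_h^{-2}B^T)^{-1}X_m$, and this is a standard fact about pushforwards of Euclidean structures along surjective linear maps, consistent with the $\perp_h$-orthogonal decomposition already set up just before the statement.
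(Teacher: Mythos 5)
Your proposal is correct and follows essentially the same route as the paper: identify $\sigma_m$ with the block matrix $(\trans A_0\mid\cdots\mid\trans A_{\mathbf r})$, recognize the metric as a constrained quadratic minimum whose value is $\trans V\bigl(\Sigma_m I^h\,\trans\Sigma_m\bigr)^{-1}V$ (the paper derives this via the $\perp_h$-orthogonality characterization, which is the same computation you invoke as a standard fact), and expand the block product to get $\sum_i h^{2i}\,\trans A_i A_i$. The only cosmetic difference is that the paper writes out the Lagrange-style derivation of the closed form in full rather than citing it.
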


\begin{proof}
We represent the map $\sigma_m$ defined in (\ref{sigma}) by the matrix $\Sigma_m$ such that if $U$ is the  
column vector representing $\bfu$ (of size $N$) then 
\[
\Sigma_m U=V \iff \sigma_m (\bfu)\,=\,\sum_{i=1}^d v_iZ_i.
\]

By definition of the matrices $A_i$, we have, as a block matrix,  
\begin{equation}
    \label{raseee}\Sigma_m=\begin{pmatrix}
    \trans A_0& \trans A_1 & ... &\trans A_{\bfr}
\end{pmatrix}.
\end{equation}

Denote by $I^{h}$ the block matrix
$$
I^h=\begin{pmatrix}I_{N_0}&\vline&0&\vline&0&\vline&0\\ \hline0&\vline&h^{2}I_{N_1}&\vline&0&\vline&0\\
  \hline 0&\vline&0&\vline&\ddots&\vline&0\\ \hline 0&\vline&0&\vline&0&\vline&h^{2\bfr}I_{N_{\bfr}} \end{pmatrix},
$$
where for $i=0,...,{\bfr}$, $I_{N_i}$ is the $N_i\times N_i$ identity matrix. Observe that $(I^h)^{-1}=I^{1/h}$, and that, 
if $\bfu$ is represented by $U$ then  
\[
q_h(\bfu)\,=\,\trans U I^{1/h} U. 
\]

Let $X_m \,=\,\sum\limits_{i=1}^d v_i Z_i$. According to the previous section, we need to compute 
$\bfu = ({\tilde{\sigma}_m})^{-1}(X_m)$. Denoting by $U$ the column-vector that represents $\bfu$, 
$U$ is characterized by the following system:
\[
\left \{
\begin{array}{c}
\text{ there exists $V$ such that } \Sigma_m U \,=\, V \\
\text{ if $W$ satisfys } \Sigma_mW=0, \text{ then } \trans W I^{1/h}U  =  0.
\end{array}
\right.
\]

The second line says that $I^{1/h}U$ belongs to the Euclidean orthogonal complement of 
$\ker \Sigma_m$ so that there exists $\tilde{V}$ such that  
\[
I^{1/h}U = \trans{}\Sigma_m \tilde{V},
\]
so that $U=I^h \trans{}\Sigma_m \tilde{V}$. The first line then implies that 
\[
\Sigma_m I^h \trans{}\Sigma_m \tilde{V} = V.
\]
We set $R_{h,m}=\Sigma_m I^h \trans{}\Sigma_m$ and observe that this matrix is invertible  
since $\Sigma_m$ has rank $d$ (this is implied by H\"ormander's condition). We obtain that 
\begin{align*}
g_m^h(X_m) & \,=\,\trans U I^{1/h} U \\
&\,=\, \trans V \trans (I^h\trans{} \Sigma_mR_{h,m}^{-1})I^{1/h} I^h\trans{}\Sigma_m R_{h,m}^{-1}V \\
& \,=\,\trans V R_{h,m}^{-1}\Sigma_m I^h\trans{}\Sigma_m R_{h,m}^{-1}V\\
&\,=\,\trans V R_{h,m}^{-1} V.
\end{align*}

So we have 
\[
\Gbb_h(m) \,=\,R_{h,m}^{-1}.
\]
The claim follows since 
\[
R_{h,m}\,=\,\sum_{i=0}^{{\bfr}}h^{2i}\,\trans A_i A_i
\]

\end{proof}

  \begin{remark}
    This theorem is true for an arbitrary frame on $M$. In particular, it is true for a coordinate frame or an adapted frame (definition \ref{defadfr}).
    The fact that $g_m^h$ is smooth thus also follows from this expression. 
 \end{remark}
 \begin{remark}
   \label{zerovf}   We could include the zero vectors obtained from the iterative brackets of the initial vector fields in the definition of $A_i$. Indeed, 
the matrix $\trans A_i A_i$ will not change by adding a row of zeros for $A_i$. 
 \end{remark}
 
 As a consequence of (\ref{GitoA}), the determinant of $\Gbb_h^{-1}$ is a polynomial in $h$ with non-negative coefficients that are smooth functions of $m$
 (see \cite{friedland2020note}). However, this expression does not provide any information on the limit $h\rightarrow 0$. For instance, neither do we know 
 on the first non-zero coefficient (the leading term in the expansion) nor the power of $h$ in the expansion near $0$.
 In the following section we will prove that the determinant of $\Gbb_h^{-1}$ is asymptotic near $0$ to $f(m)h^ {\varsigma(m)}$ for some $\varsigma(m)$
 to be determined, and then study $f$. 

 We will provide two approaches. The first one is a spectral approach, in which we describe the eigenvalues of $\Gbb_h^{-1}$, 
while the other one depends on the special properties of an adapted frame. 
 
 \subsection{The Spectral Approach}\label{specthapp}
 Fix some coordinates $x=(x_1,...,x_d)$ that are defined on an open set $U=U_x$. We define $A_i$ in the same way as in the previous subsection,
 with respect to $x$. Denote by $\Gbb_h$ the Gram representation matrix of $g^h$ in the coordinate $x$. In these coordinates,
 $\text{dvol}g^h$ can be locally written as
 \begin{equation}
     \label{dvolgs}\text{dvol}g^h=\sqrt{|\text{det}(\Gbb_h)|}|dx_1...dx_d|.
   \end{equation}
   
   We remind the reader that we are working locally. 
The $A_i$'s, $\Gbb_h$ and $\text{dvol}g^h$ depend on the point in the coordinates chosen, so an appropriate notation
   will be introduced when convenient, but for now, when no dependence is shown, 
this means that we work at a fixed point in the fixed coordinate frame. 
  
\subsubsection{Pointwise Limiting Behaviour}
The pointwise behaviour (in the limit $h$ goes to $0$) of $\Gbb_h$ will follow from a general theorem on self-adjoint matrices 
that we now prove.

Let $(A_i)_{i=0,\cdots, {\bfr}}$ be ${\bfr}+1$ self-adjoint $N_i\times d$ matrices (with  $\sum\limits_{i=0}^{\bfr} N_i \defeq N$). 
We aim at describing the spectrum of the family of matrices 
\[
\Acal_h \defeq \sum_{i=0}^{\bfr} h^{2i}\,\trans A_i A_i.
\]
We observe that if we introduce the block-matrix 
\[
B_h\,\defeq\, 
\begin{pmatrix}
A_0\\
hA_1\\
\vdots\\
h^{\bfr} A_{\bfr}
\end{pmatrix} 
\]
then we have 
\[
\Acal_h \,=\, \trans B_h B_h.
\]

Observe that the rank of $B_h$ does not depend on $h$ in $]0,+\infty[$. Since $\rank(\Acal_h)=\rank \,(B_h)$, we obtain that $\rank(\Acal_h)$ also does not depend on $h$ in $]0,+\infty[$ and that $\Acal_h$ is invertible if and only if $B_h$ has rank $d$. 

We define  $\mv_0=\text{Im}(\trans A_0 A_0)$ and let, for $1\leq j\leq {\bfr}$, 
$$\mathcal{V}_{j}=\mv_{j-1}+\text{Im}(\hat{A}_j),$$  
where \begin{equation}
\label{rapoapo}\hat{A}_j=\restr{ \trans A_j A_{j}}{(\mv_{j-1})^\perp},
\end{equation}  
and $\mathcal V_j^\perp$ represents the orthogonal complement of $\mathcal V_j$ with respect to the canonical 
inner product in $\mathbb{R}^d$.  We also set $n_j=\dim \Vcal_j$ for $j=0,\cdots {\bfr}$ and $n_{-1}=0$.

Since $\Acal_h$ is an analytic family of self-adjoint matrices, analytic perturbation theory applies and its spectrum
is organized into real analytic eigenbranches
(see \cite{kato2013perturbation}). With these notations we have the following theorem.

\begin{theorem}\label{thm:specAh}
  Assume $B_h$ has rank $d$ for all $h>0$. Then $n_{\bfr}=d$ and,
  for any $0\leq j\leq {\bfr}$, there are $n_j-n_{j-1}$ eigenbranches $\{\lambda^j_i(h)\}_{1\leq i\leq n_j-n_{j-1}}$ of $\Acal_h$
such that 
$$\lambda_i^j(h)=h^{2j}\eta_i^j(h),$$ where for any $j$, 
the analytic functions $\{\eta_{i}^j(h)\}_{1\leq i\leq n_j-n_{j-1}}$ converges, as $h\rightarrow 0$, to the 
$n_j-n_{j-1}$ non-zero eigenvalues of $\hat{A}_j$.
\end{theorem}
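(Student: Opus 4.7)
The plan is to establish a block structure of $\mathcal{A}_h$ via a vanishing property of the matrices $A_i$, then rescale anisotropically to reduce to a positive-definite analytic family, and finally invoke Rellich's perturbation theorem combined with a Courant--Fischer argument to extract the correctly scaled eigenbranches.

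\textbf{Structural lemma.} I would first show that $A_i w = 0$ for every $w \in \mathcal{V}_i^\perp$. Since $\mathcal{V}_{i-1} \subset \mathcal{V}_i$, one has $w \in \mathcal{V}_{i-1}^\perp$, and $\mathcal{V}_i^\perp$ is precisely the kernel of the self-adjoint operator $\hat{A}_i = \pi_{i-1}^\perp\,\trans{A_i}A_i\,\pi_{i-1}^\perp$ on $\mathcal{V}_{i-1}^\perp$; the identity $\langle \hat A_i w, w\rangle = |A_i w|^2$ (obtained using $\pi_{i-1}^\perp w = w$) then forces $A_i w = 0$. The rank-$d$ hypothesis on $B_h$ means $\bigcap_i \ker A_i = \{0\}$, while we just proved $\mathcal{V}_{\bfr}^\perp \subset \bigcap_i \ker A_i$, hence $n_{\bfr} = d$.

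\textbf{Block decomposition and anisotropic rescaling.} Fix an orthonormal basis of $\mathbb{R}^d = \bigoplus_{j=0}^{\bfr} \mathcal{W}_j$ adapted to the flag, with $\mathcal{W}_j = \mathcal{V}_j \cap \mathcal{V}_{j-1}^\perp$. The structural lemma then forces each $\trans{A_i}A_i$ to have non-zero blocks only at positions $(p,q)$ with $p,q \leq i$, and its $(i,i)$-diagonal block equals the positive-definite $\hat{A}_i|_{\mathcal{W}_i}$. Summing yields $(\mathcal{A}_h)_{pq} = h^{2\max(p,q)}M_{pq}(h)$ with $M_{pq}$ analytic in $h^2$ and $M_{jj}(0) = \hat{A}_j|_{\mathcal{W}_j}$. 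Setting $D_h = \sum_j h^j Q_j$, where $Q_j$ is the orthogonal projector onto $\mathcal{W}_j$, the congruent matrix $\tilde{\mathcal{A}}_h := D_h^{-1}\mathcal{A}_h D_h^{-1}$ has block $(p,q) = h^{|p-q|} M_{pq}(h)$, hence is real-analytic at $h=0$ with positive-definite limit $\tilde{\mathcal{A}}_0 = \mathrm{diag}(\hat{A}_j|_{\mathcal{W}_j})_j$ whose spectrum is exactly the disjoint union of the non-zero eigenvalues of the $\hat{A}_j$'s, with multiplicities $n_j - n_{j-1}$ summing to $n_{\bfr}=d$.

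\textbf{Extraction of eigenbranches.} Rellich's theorem applied to the analytic family $\tilde{\mathcal{A}}_h$ provides analytic eigenvalues $\eta_i^j(h)$ and orthonormal eigenvectors $\phi_i^j(h)$ with $\phi_i^j(0)\in \mathcal{W}_j$ and $\eta_i^j(0)$ equal to a non-zero eigenvalue of $\hat{A}_j$. Transferring this to $\mathcal{A}_h$, the two-sided operator bound $c\,D_h^2 \leq \mathcal{A}_h \leq C\,D_h^2$ (valid for small $h$ by uniform positivity of $\tilde{\mathcal{A}}_h$) combined with $\langle D_h^2 v,v\rangle \geq h^{2j}|v|^2$ for $v\in \mathcal{V}_j$ and Courant--Fischer gives that exactly $n_j-n_{j-1}$ eigenvalues of $\mathcal{A}_h$ are of order $h^{2j}$. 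To pin down their leading coefficients I would use the trial vectors $v_i^j(h) := D_h^{-1}\phi_i^j(h)$: direct computation yields $\langle \mathcal{A}_h v_i^j, v_i^j\rangle = \langle \tilde{\mathcal{A}}_h \phi_i^j, \phi_i^j\rangle = \eta_i^j(h)$ and $|v_i^j(h)|^2 = h^{-2j}(1+O(h^2))$, hence Rayleigh quotients equal to $h^{2j}\eta_i^j(h)(1+O(h^2))$. Since $\mathcal{A}_h$ is itself analytic in $h^2$, Rellich applied to $\mathcal{A}_h$ produces analytic eigenbranches $\lambda_i^j(h)$, and the preceding interval and Rayleigh bounds force exactly $\lambda_i^j(h) = h^{2j}\eta_i^j(h)$ with $\eta_i^j$ converging to the non-zero eigenvalues of $\hat{A}_j$.

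\textbf{Main obstacle.} The delicate point is that $\mathcal{A}_h$ and $\tilde{\mathcal{A}}_h$ are only congruent (via $D_h$) and not similar, so the spectrum of $\tilde{\mathcal{A}}_h$ does not directly give the spectrum of $\mathcal{A}_h$. One must therefore handle the $\bfr+1$ different scales $h^{2j}$ simultaneously in the Courant--Fischer bookkeeping: ensuring both that the counting in each packet matches $n_j-n_{j-1}$ and that the leading coefficient of each branch is correctly identified with a non-zero eigenvalue of $\hat{A}_j$. This is precisely where the full block structure, and not just the positivity of each diagonal block, is needed.
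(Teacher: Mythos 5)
Your structural lemma ($A_iw = 0$ for $w\in\mathcal{V}_i^\perp$), the resulting block structure of each $\trans A_i A_i$, the identification of the analytic positive-definite limit $\tilde{\Acal}_0 = \mathrm{diag}(\hat{A}_j|_{\mathcal{W}_j})$, and the counting of eigenvalues of order $h^{2j}$ via the two-sided bound $cD_h^2\leq \Acal_h\leq CD_h^2$ are all correct, and this congruence--rescaling route is genuinely different from the paper's inductive argument. However, the last step contains a concrete error. The claim $|v_i^j(h)|^2 = h^{-2j}\bigl(1+O(h^2)\bigr)$ fails whenever $j<\bfr$: the eigenvalue equation for $\tilde{\Acal}_h$ only controls $Q_p\phi_i^j(h) = O(h^{|p-j|})$, so for $p>j$ the contribution $h^{-2p}|Q_p\phi_i^j(h)|^2$ is of the \emph{same} order $h^{-2j}$ as the leading term. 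A minimal counterexample: with $\bfr=1$, $d=2$, $\trans A_0A_0 = \mathrm{diag}(1,0)$ and $\trans A_1A_1 = \bigl(\begin{smallmatrix}a&b\\ b&c\end{smallmatrix}\bigr)$, one gets $\tilde{\Acal}_h = \bigl(\begin{smallmatrix}1+h^2a&hb\\ hb&c\end{smallmatrix}\bigr)$, and for the $j=0$ branch $v(h)=D_h^{-1}\phi(h)\to\bigl(1,\,-b/(c-1)\bigr)$, so $|v(h)|^2\to 1+b^2/(c-1)^2\neq 1$. Moreover the eigenbranches of $\Acal_h$ are \emph{not} $h^{2j}$ times those of $\tilde{\Acal}_h$: in this example the small eigenvalue of $\Acal_h$ is $h^2\bigl(c - h^2b^2 + O(h^4)\bigr)$ while that of $\tilde{\Acal}_h$ is $c + h^2 b^2/(c-1)+O(h^4)$; they agree only at $h=0$. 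The Rayleigh quotient is a one-sided bound, and as the example shows your trial vectors do not attain the correct asymptotics, so the assertion that ``the preceding interval and Rayleigh bounds force exactly $\lambda_i^j(h) = h^{2j}\eta_i^j(h)$ with $\eta_i^j$ converging to the non-zero eigenvalues of $\hat{A}_j$'' is unjustified. (A secondary issue: if $\hat{A}_j$ and $\hat{A}_{j'}$ share an eigenvalue, Rellich's eigenvectors of $\tilde{\Acal}_0$ need not lie in a single $\mathcal{W}_j$, which also undermines the indexing of your trial vectors.)

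The paper's proof instead proceeds by induction on the truncation $\Acal_k(h) = \sum_{i\leq k}h^{2i}\trans A_iA_i$: adding the scale $h^{2(k+1)}\trans A_{k+1}A_{k+1}$ turns the kernel into eigenvalues of order $h^{2(k+1)}$, and their limits are obtained by projecting the eigenvalue equation onto the orthogonal complement of the non-zero eigenspace of $\Acal_k(h)$, which at $h=0$ is $\mathcal{V}_k^\perp$ and produces exactly $\hat{A}_{k+1}$. That projector-based perturbation step is what replaces your Rayleigh argument and actually delivers the leading coefficients. To salvage your approach you would need trial vectors adapted to the generalized eigenproblem $\tilde{\Acal}_h w = \mu D_h^{-2}w$ (equivalent to $\Acal_h v = \mu v$ with $w=D_hv$), for which the required decay of the blocks $w_p$ with $p>j$ does hold, or some other genuinely two-sided estimate.
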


As a corollary of this theorem we can describe the spectrum of $\Gbb_h^{-1}$.

\begin{theorem}\label{ases}
Fix some point $m\in U$. For any $0\leq j\leq \bfr$, there are $n_j(m)-n_{j-1}(m)$ eigenbranches $\{\lambda^j_i(h)\}_{1\leq i\leq n_j-n_{j-1}}$ of $\Gbb_h^{-1}$ 
such that 
$$\lambda_i^j(h)=h^{2j}\eta_i^j(h),$$ where for any $j$, 
the analytic functions $\{\eta_{i}^j(h)\}_{1\leq i\leq n_j-n_{j-1}}$ do not vanish at $h=0$.
\end{theorem}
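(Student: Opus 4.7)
The plan is to derive Theorem~\ref{ases} as a direct consequence of Theorem~\ref{thm:specAh} applied to the family $\Acal_h = \Gbb_h^{-1} = \sum_{i=0}^{\bfr} h^{2i}\,\trans A_i A_i$ furnished by Theorem~\ref{Tgitoa}. Only two things require attention: the rank hypothesis of Theorem~\ref{thm:specAh}, and the identification of its abstract dimensions $n_j = \dim \mathcal{V}_j$ with the sub-Riemannian growth vector $n_j(m) = \dim \Dcal_j(m)$. The rank condition is immediate: writing $\trans B_h = \Sigma_m \cdot \mathrm{diag}(I_{N_0}, hI_{N_1}, \ldots, h^{\bfr} I_{N_{\bfr}})$, the diagonal factor is invertible for $h > 0$, so $\rank(B_h) = \rank(\Sigma_m) = d$ by H\"ormander's condition~(\ref{hc}).

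For the identification of dimensions, under the frame $(Z_1, \ldots, Z_d)$ the rows of $A_i$ correspond, in $\mathbb{R}^d \cong T_m M$, to the vectors $X^{ij}_m$, so $\mathrm{Im}(\trans A_i)$ corresponds to $\rmspan\{X^{ij}_m : 1 \leq j \leq N_i\}$, and $\sum_{i=0}^{j} \mathrm{Im}(\trans A_i)$ corresponds to $\Dcal_j(m)$. It thus suffices to show by induction on $j$ that the abstract flag satisfies $\mathcal{V}_j = \sum_{i=0}^{j} \mathrm{Im}(\trans A_i)$. The base case $\mathcal{V}_0 = \mathrm{Im}(\trans A_0)$ follows from the general identity $\mathrm{Im}(\trans B B) = \mathrm{Im}(\trans B)$. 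The inductive step rests on the key lemma: for any real $n \times d$ matrix $B$ and any subspace $W \subseteq \mathbb{R}^d$, one has $\mathrm{Im}(\trans B B|_{W^\perp}) + W = \mathrm{Im}(\trans B) + W$. Its proof uses the orthogonality identity $(B(W^\perp))^\perp = (\trans B)^{-1}(W)$ in $\mathbb{R}^n$, which yields the orthogonal decomposition $\mathbb{R}^n = B(W^\perp) \oplus (\trans B)^{-1}(W)$; applying $\trans B$ to this decomposition then gives the required equality (the reverse inclusion being trivial). Taking $B = A_j$ and $W = \mathcal{V}_{j-1}$ closes the induction.

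Once both points are established, Theorem~\ref{thm:specAh} produces exactly $n_j(m) - n_{j-1}(m)$ analytic eigenbranches $\lambda_i^j(h) = h^{2j}\eta_i^j(h)$ for each $j$, and the $\eta_i^j$ are non-zero at $h = 0$ since they converge there to non-zero eigenvalues of $\hat A_j$. I expect the main obstacle to be the flag identification $\mathcal{V}_j \leftrightarrow \Dcal_j(m)$; the crux of that identification is precisely the orthogonality identity that drives the key lemma above, and everything else reduces to bookkeeping via Theorem~\ref{thm:specAh}.
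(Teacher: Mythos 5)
Your proof is correct and follows the route the paper intends: Theorem~\ref{ases} is indeed meant to be a direct application of Theorem~\ref{thm:specAh} to $\Gbb_h^{-1}=\sum_{i}h^{2i}\,\trans A_iA_i$. The paper, however, presents the deduction as immediate and never verifies that the abstract flag $\Vcal_j$ built from (\ref{rapoapo}) agrees, under the frame identification $\R^d\cong T_mM$, with the growth flag $\Dcal_j(m)$; this is exactly what is needed to convert the abstract $n_j=\dim\Vcal_j$ into the growth vector $n_j(m)$. Your induction, driven by the key lemma $\mathrm{Im}\big(\trans B B|_{W^\perp}\big)+W=\mathrm{Im}(\trans B)+W$ (proved via the orthogonal decomposition $\R^n=B(W^\perp)\oplus(\trans B)^{-1}(W)$ and applying $\trans B$), supplies precisely this missing step, and the rank hypothesis check via $\trans B_h=\Sigma_m\cdot\mathrm{diag}(I_{N_0},hI_{N_1},\dots,h^{\bfr}I_{N_{\bfr}})$ is clean. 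So your proposal is a correct and, if anything, more careful rendering of a deduction the paper leaves implicit; the linear-algebra lemma is a worthwhile observation that makes the link between the perturbative flag $\Vcal_j$ and the sub-Riemannian flag $\Dcal_j(m)$ transparent.
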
  

We now proceed to the proof of the theorem \ref{thm:specAh}

\begin{proof}
For $1\leq k\leq {\bfr}$, let $(H_k)$ be the following assertion: if we set
    \begin{equation}
        \label{skey}\Acal_k(h)={ \trans A_0} A_0+h^2{\,\trans A_1} A_1+...+h^{2k}{\,\trans A_k} A_k,
    \end{equation}  then, we have that
    \begin{equation}\label{spec}
    \text{spec}(\Acal_k(h))=\{\{h^{2j}\{\rho_i^j(h)\}\}_{0\leq j\leq k, 1\leq i\leq n_j-n_{j-1}},0,...,0\},
  \end{equation}
where, for any $0\leq j\leq k$, the analytic functions $\{\rho_i^j(h)\}_{1\leq i\leq n_j-n_{j-1}}$ converge to the non-zero eigenvalues of $\hat{A}_j.$ 
\\
Moreover, if $\{\zeta_i^j(h)\}_{0\leq j\leq k,1\leq i\leq n_j-n_{j-1}}$ is a set of orthonormal eigenfunctions corresponding to the non-zero eigenvalues of $\Acal_k(h)$,
then
$$\text{span}\{\zeta_i^j(0),0\leq j\leq k,1\leq i\leq n_j-n_{j-1}\}=\mathcal{V}_k.$$


We prove $(H_k)$ by induction on $k$. Since the proof for $k=1$ and the proof of $(H_k)\implies (H_{k+1})$ are completely parallel (see Remark \ref{k=1} below), 
we only prove the latter.

Suppose now that $(H_k)$ holds true. Write the spectrum of $\Acal_k(h)$ as in (\ref{spec}):
\[
  \text{spec}(\Acal_k(h))=\{\{h^{2j}\{\rho_i^j(h)\}\}_{0\leq j\leq k, 1\leq i\leq n_j-n_{j-1}},0,...,0\},
\]
and let $\{\zeta_i^j(h)\}_{0\leq j\leq k,1\leq i\leq n_j-n_{j-1}}$ be the corresponding analytic branches of eigenfunctions corresponding to the non-zero eigenvalues of 
$\Acal_k(h)$.

Denote by $\mathcal{J}_k(h)$ the subspace
$$
\mathcal{J}_k(h):=\text{span}\{\zeta_i^j(h), {0\leq j\leq k,1\leq i\leq n_j-n_{j-1}}\}.
$$

Using the induction hypothesis, we have:
  \begin{equation}\label{stepk}
      \mathcal{J}_k(0)=\mathcal{V}_k.
    \end{equation}

We now prove $(H_{k+1})$. 

First we observe that the rank of $\Acal_{k+1}(h)$ is independent of $h\neq 0$. It follows that there is a fixed number of 
$0$ eigenbranches. 

Then, using analyticity and the fact that $\Acal_{k+1}(h) - \Acal_{k}(h) = O(h^{2k+2})$, we can write
    \begin{equation}
    \label{habahab}\text{spec}(\Acal_{k+1}(h))=\{\{h^{2j}\Tilde{\rho}_i^j(h)\}_{0\leq j\leq{k},1\leq i\leq n_j-n_{j-1}},\{h^{2k+2}\Tilde{\rho}_i^{k+1}(h)\}_{1\leq i\leq d-n_k}\},
\end{equation} 
with 
\begin{equation}\label{fev}
    |h^{2j}(\Tilde{\rho}_i^j(h)-{\rho}_i^j(h))|=O(h^{2k+2}),
\end{equation}
 and  
\begin{equation}\label{fef}
     \Tilde{\zeta}_i^j(0)=\zeta_i^j(0),
 \end{equation}
 for all $0\leq j\leq k$ and $1\leq i\leq n_k-n_{k-1}$, where

 \begin{equation}
     \label{popopopopopopopo}\left\{\{\tilde{\zeta}_i^j(h)\}_{0\leq j\leq k,1\leq i\leq n_j-n_{j-1}
},\{\tilde{\zeta}_i^{k+1}(h)\}_{1\leq i\leq d-n_k}\right\}
\end{equation}
is a set of orthonormal eigenfunctions corresponding to 
the eigenvalues in (\ref{habahab}). 

This implies, by the induction hypothesis, that $(\Tilde{\rho}_i^j(0),\tilde{\zeta}_i^j(0))$ is an eigenpair of $\hat{A}_j$ for any $0\leq j\leq k,1\leq i\leq n_j-n_{j-1}$. \\ 

Fix $i_1\in \{1,...,d-n_k\}$ and consider $\Tilde{\rho}_{i_1}^{k+1}(h)$ with corresponding eigenfunction $\Tilde{\zeta}_{i_1}^{k+1}(h)$. 
The equations (\ref{stepk}) and (\ref{fef}) imply that 
$\{\tilde{\zeta}_i^j(0)\}_{0\leq j\leq k,1\leq i\leq {n_j-n_{j-1}}}$ is an orthonormal basis of $\mathcal{V}_k.$ So, we get  
\begin{equation}\label{me3}
    \Tilde{\rho}_{i_1}^{k+1}(h)=O(h^{2k+2}) \text{ and }\Tilde{\zeta}_{i_1}^{k+1}(0)\in \mathcal{V}_k^\perp\;\;\; \text{as } h\rightarrow 0.
\end{equation}
Let $\mathcal{P}_h^k$ and $\mathcal{Q}_h^k$ be the orthogonal projections on $\mathcal{J}_k(h)$ and $(\mathcal{J}_k(h))^\perp$ respectively. Write 

$$\Tilde{\zeta}_{i_1}^{k+1}(h)=\mathcal{P}_h^k\Tilde{\zeta}_{i_1}^{k+1}(h)+\mathcal{Q}_h^k\Tilde{\zeta}_{i_1}^{k+1}(h).$$ 

The projectors $\mathcal{P}_h^k$ and $\mathcal{Q}_h^k$ depend analytically on $h$, and, by (\ref{me3}), we have that
\begin{equation}
     \label{walabatal}\mathcal{P}_h^k\Tilde{\zeta}_{i_1}^{k+1}(h)=o(1) \text{ and }  \mathcal{Q}_h^k\Tilde{\zeta}_{i_1}^{k+1}(h)=\Tilde{\zeta}_{i_1}^{k+1}(0)+o(1),
   \end{equation}
   as $h\rightarrow0$. 

Therefore, the eigenvalue equation implies that
   \begin{equation}\label{LE}
    h^{2k+2}\Tilde{C}_h\Tilde{\zeta}_{i_1}^{k+1}(h)+h^{2k+2}\Tilde{B}_h\Tilde{\zeta}_{i_1}^{k+1}(h)=h^{2k+2}\Tilde{\rho}_{i_1}^{k+1}(h)\mathcal{Q}_h^k\Tilde{\zeta}_{i_1}^{k+1}(h),
  \end{equation}
  where $$\Tilde{C}_h=\mathcal{Q}_h^k\trans A_{k+1}A_{k+1}\mathcal{P}_h^k$$ and 
$$\Tilde{B}_h=\mathcal{Q}_h^k \trans A_{k+1}A_{k+1}\mathcal{Q}_h^k.$$ 

Equation (\ref{LE}) gives, after dividing by $h^{2k+2}$ and writing $\tilde{C_h}\Tilde{\zeta}_{i_1}^{k+1}(h)=o(1)$ (by (\ref{walabatal})), the following equation 
$$o(1)+\Tilde{B_h}\Tilde{\zeta}_{i_1}^{k+1}(h)\,=\,\Tilde{\rho}_{i_1}^{k+1}(h)\mathcal{Q}_h^k\Tilde{\zeta}_{i_1}^{k+1}(h).$$ 

Evaluating at $h=0$ we obtain  
\begin{equation}\label{eve}
    (\tilde{Q}_0\trans A_{k+1}A_{k+1}\tilde{Q}_0-\Tilde{\rho}_{i_1}^{k+1}(0))\tilde{Q}_0\Tilde{\zeta}_{i_1}^{k+1}(0)=0.
  \end{equation}
  Therefore, (\ref{stepk}) and (\ref{eve}) imply that $(\Tilde{\rho}_{i_1}^{k+1}(0),\Tilde{\zeta}_{i_1}^{k+1}(0))$ is an eigenpair of $\hat{A}_{k+1}$.\\

Finally, the orthonormality of the eigenbranches in (\ref{popopopopopopopo}) implies the orthonormality of $\{\Tilde{\zeta}_i^{k+1}(0)\}_{1\leq i\leq d-n_k}$. 
Therefore, the $d-n_k$ orthonormal (thus linearly independent) vectors $\{\Tilde{\zeta}_i^{k+1}(0)\}_{1\leq i\leq d-n_k}$ form an eigenbasis of $\mathcal{V}_k^{\perp}$ 
(which is of dimension $d-n_k$ by (\ref{stepk})). So, they cover all the eigenvalues of $\hat{A}_{k+1}$.\\

It remains to prove that the span of the eigenvectors that correspond to the non-zero eigenvalues of $\hat{A}_{k+1}(h)$ span 
$\mathcal{V}_{k+1}$ at $h=0$. We write the eigenvectors in (\ref{popopopopopopopo}) as follows
  \begin{equation}
    \left\{\{\tilde{\zeta}_i^j(h)\}_{0\leq j\leq k,1\leq i\leq n_j-n_{j-1}
},\{\tilde{\zeta}_i^{k+1}(h)\}_{1\leq i\leq n_k-n_{k-1}},\{\til{\zeta}_i^z\}_{1\leq i\leq d-n_{k+1}}\right\},
 \end{equation}
where $\{\til{\zeta}_i^z\}_{1\leq i\leq d-n_{k+1}}$ corresponds to the 0 eigenvalue. 
Again, (\ref{stepk}) and  (\ref{fef}) implies that $\{\tilde{\zeta}_i^j(0)\}_{0\leq j\leq k,1\leq i\leq n_j-n_{j-1}
}$ span $\mathcal{V}_k$. Now, we prove that $\{\tilde{\zeta}_i^{k+1}(0)\}_{1\leq i\leq n_{k+1}-n_{k}}$ span $\text{Im}(\hat{A}_k).$ 
Since $\{\tilde{\zeta}_i^{k+1}(0)\}_{1\leq i\leq n_{k+1}-n_{k}}$ are eigenvectors of $\hat{A}_{k+1}$ it is clear that 
$$\text{span}\{\tilde{\zeta}_i^{k+1}(0),1\leq i\leq n_k-n_{k-1}\}\subset \text{Im}(\hat{A}_{k+1}).$$ 

Now, let $u\in\text{Im}(\hat{A}_{k+1})$. Then, there is $v\in\mathcal{V}_k^\perp$ such that $u=\hat{A}_{k+1}(v).$ We proved that 
$$
\text{span} \left\{\{\tilde{\zeta}_i^{k+1}(0)\}_{1\leq i\leq n_k-n_{k-1}},\{\til{\zeta}_i^z\}_{1\leq i\leq d-n_{k+1}}\right\}=\mathcal{V}_k^{\perp}.
$$ 
So, we can write $v$ as a linear combination 
$$
v=\sum_{i=1}^{n_{k+1}-n_k}a_i\tilde{\zeta}_i^{k+1}(0)+\sum_{i=1}^{d-n_{k+1}}b_i\til{\zeta}_i^z.
$$ 
Then, because $\{\til{\zeta}_i^z\}_{i=1,...,d-n_{k+1}}$ correspond to the zero eigenvalues, we get  

\begin{equation*}
    \begin{split}
        u&=\hat{A}_{k+1}(v)\\
&=\sum_{i=1}^{n_{k+1}-n_k}a_i\hat{A}_{k+1}(\tilde{\zeta}_i^{k+1}(0))+\sum_{i=1}^{d-n_{k+1}}b_i\hat{A}_{k+1}(\til{\zeta}_i^z)\\
&= \sum_{i=1}^{n_{k+1}-n_k}a_i\til\rho_i^{k+1}(0)(\tilde{\zeta}_i^{k+1}(0))\in \text{span}\{\tilde{\zeta}_i^{k+1}(0),1\leq i\leq n_k-n_{k-1}\}.
    \end{split}
\end{equation*}
    Therefore, we get   
$$
\text{span} \left\{\{\tilde{\zeta}_i^j(h)\}_{0\leq j\leq k,1\leq i\leq n_j-n_{j-1}
},\{\tilde{\zeta}_i^{k+1}(h)\}_{1\leq i\leq n_k-n_{k-1}}\right\}=\mathcal{V}_{k+1}.
$$
We conclude $(H_{k+1})$. \\

The induction stops at $k={\bfr}$ and since $n_{\bfr}=d$, we get the final result.
    \end{proof}

\begin{remark}\label{k=1}
The case $k=1$ follows from the same arguments. It is actually simpler because, in that case, the projectors $\Pcal_h$ and $\Qcal_h$ actually do not depend on $h$. 
\end{remark}

    \begin{remark}
    The relevant part of the induction in the proof of theorem \ref{ases} is for $0\leq j\leq {\bfr}(m)$. For $j>{\bfr}(m),\, n_j(m)=d$ and so the statement of theorem indicates that there isn't any  
eigenbranches of order $h^{2j}$. This is because at $m$, $\Acal_{{\bfr}(m)}(h)$ defined by (\ref{skey}) generates all the $d$ non-zero eigenbranches of $G_h^{-1}(m)$, and 
with any further perturbation of higher order, the $d$ new eigenbranches will only be perturbed by a big O of this higher order, but the behavior near $h=0$ remains the same.
\end{remark}

Theorem \ref{ases} implies that the spectrum of $G_h^{-1}$ can be written as follows:
    $$
\{\underbrace{\text { Order 0 terms} }_{n_0(m)},\underbrace{\text { Order $2$ terms} }_{n_1(m)-n_0(m)}...\underbrace{\text { Order 2\textbf{r} terms} }_{d-n_{{\bfr}-1}(m)}\}.
$$ 
In particular, as the determinant of a matrix is the product of the eigenvalues, we get
\begin{corollary}\label{aham}
  For a fixed $m\in U$, the determinant of $\Gbb_h^{-1}$ has the following expansion 
$$
\mathrm{det}(\Gbb_h^{-1}(m))=f_h(m)h^{2\varsigma(m)},
$$ where 
\begin{equation}
        \label{varsigmaaa}\varsigma(m)=\sum_{i=1}^{{\bfr}} i[n_i(m)-n_{i-1}(m)],
    \end{equation}  
and $f_h(m)$ is given by 
$$
f_h(m)=\prod_{j=0}^{\bfr}\prod_{i=1}^{n_j-n_{j-1}}\eta_i^j(h)(m),
$$ 
with $\eta_i^j(h)(m)$ being introduced in theorem \ref{ases} (with the convention that $\prod_{i=1}^0=1$).
    
    Moreover, $f_h(m)$ converges, as $h\rightarrow 0$, to 
    $f(m)\neq 0$, where $f(m)$ is the product of the non-zero eigenvalues of $\hat{A}_i(m)$ (given by (\ref{rapoapo})) for $i=0,...,{\bfr}$.
\end{corollary}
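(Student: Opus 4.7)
The plan is to deduce this corollary directly from Theorem \ref{ases}, which already furnishes a complete description of the eigenbranches of $\Gbb_h^{-1}(m)$. Since the determinant of a matrix equals the product of its eigenvalues, I just need to reorganise that product and read off both the power of $h$ and the leading coefficient.

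First, by Theorem \ref{ases}, for each $0 \leq j \leq \bfr$ there are exactly $n_j(m) - n_{j-1}(m)$ eigenbranches of $\Gbb_h^{-1}(m)$ of the form $h^{2j}\eta_i^j(h)(m)$, where $\eta_i^j$ is analytic in $h$ and $\eta_i^j(0)(m)$ is a non-zero eigenvalue of $\hat{A}_j(m)$. Under Hörmander's condition $B_h$ has rank $d$, so Theorem \ref{ases} applies and $n_{\bfr}(m) = d$; hence these eigenbranches account for all $d$ eigenvalues of $\Gbb_h^{-1}(m)$ (none are identically zero).

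Next, I simply multiply them. The total power of $h$ collected is
\[
\sum_{j=0}^{\bfr} 2j \, [n_j(m)-n_{j-1}(m)] \,=\, 2\sum_{j=1}^{\bfr} j\,[n_j(m)-n_{j-1}(m)] \,=\, 2\varsigma(m),
\]
with the $j=0$ term dropping out, which matches the definition of $\varsigma(m)$ in (\ref{varsigmaaa}). The remaining factor is
\[
f_h(m) \,=\, \prod_{j=0}^{\bfr} \prod_{i=1}^{n_j-n_{j-1}} \eta_i^j(h)(m),
\]
as claimed.

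Finally, to show $f_h(m) \to f(m) \neq 0$ as $h \to 0$, I would just use the analyticity of each $\eta_i^j$ together with the conclusion of Theorem \ref{ases} that $\eta_i^j(0)(m)$ enumerates the non-zero eigenvalues of $\hat{A}_j(m)$. The limiting value $f(m)$ is therefore the product $\prod_{j=0}^{\bfr} \prod_{i=1}^{n_j - n_{j-1}} \eta_i^j(0)(m)$, i.e.\ the product of the non-zero eigenvalues of the matrices $\hat{A}_j(m)$, which is non-zero because each factor is non-zero. There is no real obstacle here beyond bookkeeping; the entire content of the corollary is already packaged in Theorem \ref{ases}, and the only point worth checking carefully is the index shift in the exponent computation (making sure the $j=0$ block contributes to $f_h$ but not to the power of $h$).
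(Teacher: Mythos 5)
Your argument is correct and is precisely the paper's own (the paper simply states, after Theorem~\ref{ases}, that the determinant is the product of the eigenvalues and reads off the corollary). The only bookkeeping point worth noting is that the identification of $\eta_i^j(0)(m)$ with a non-zero eigenvalue of $\hat{A}_j(m)$ comes from Theorem~\ref{thm:specAh} rather than from the statement of Theorem~\ref{ases} itself, but this is exactly how the paper uses the two results together.
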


    \subsubsection{Dependence With Respect To The Point}
    In the previous section, we obtained the expansion near $h=0$ for $\text{det}(\Gbb_h^{-1})$ at a fixed point $m\in U$. Theorem \ref{ases} is pointwise and 
the behavior in corollary \ref{aham} is a pointwise behavior. Now, we allow $m$ to move in $M$ and we study the characteristics of the function $f(m)$, 
that was defined in corollary \ref{aham}. In order to obtain a well-defined function, it is convenient to introduce a fixed non-vanishing volume form for 
reference. We choose the volume form associated with $h=1$. We define the function $f_h$ as the ratio $(\mathrm{dvol_{g^1}}/\mathrm{dvol_{g^h}})^2$. 
In any chosen frame, $f_h$ is obtained as $\text{det}(\Gbb_h^{-1})$ multiplied by a non-vanishing smooth function. In particular 
we already know that $f_h$ is a polynomial in $h$ with non-negative smooth coefficients :

\begin{equation}
     \label{finanstntntn}f_h\,=\,\sum_{k\geq0}a_k(m)h^{2k},
\end{equation}
where the $a_k$ are smooth non-negative functions on $M$.

The following proposition is a rephrasing of the preceding section.

\begin{proposition}\label{prop:finitelimit}
For any $m\in M$, let $\varsigma(m) = \sum\limits_{i=1}^{{\bfr}} i[n_i(m)-n_{i-1}(m)]$ then $h^{-2\varsigma(m)}f_h(m)$ converges to a finite positive limit. 
\end{proposition}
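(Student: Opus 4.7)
The proof will be essentially an unpacking of Corollary \ref{aham} combined with the frame-independent definition of $f_h$, so the task is mostly organizational rather than substantive.

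First I would relate $f_h(m)$ to $\det(\Gbb_h^{-1})$ in a fixed local frame. Since $\mathrm{dvol}_{g^h} = \sqrt{\det \Gbb_h}\,|\nu_1\wedge\cdots\wedge\nu_d|$ (formula (\ref{genvolf})), the ratio of volume forms is
\[
\left(\frac{\mathrm{dvol}_{g^1}}{\mathrm{dvol}_{g^h}}\right)^2 = \frac{\det \Gbb_1(m)}{\det \Gbb_h(m)} = \det \Gbb_1(m) \cdot \det \Gbb_h^{-1}(m).
\]
The factor $\det \Gbb_1(m)$ is a smooth, strictly positive function of $m$ (it is the determinant of the positive-definite Gram matrix of $g^1$) and is independent of $h$. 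Thus in the chosen frame
\[
f_h(m) = \det \Gbb_1(m)\cdot \det \Gbb_h^{-1}(m).
\]

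Next I would invoke Corollary \ref{aham} pointwise at $m$: by Theorem \ref{ases}, the eigenbranches of $\Gbb_h^{-1}(m)$ split into groups, with exactly $n_i(m)-n_{i-1}(m)$ branches of order $h^{2i}$ whose rescaling $\eta^i_\ell(h)(m)$ converges as $h\to 0$ to the non-zero eigenvalues of $\hat{A}_i(m)$. Taking the product over all eigenbranches yields
\[
\det \Gbb_h^{-1}(m) = h^{2\varsigma(m)} \prod_{i=0}^{\bfr}\prod_{\ell=1}^{n_i-n_{i-1}}\eta^i_\ell(h)(m),
\]
with the exponent $\varsigma(m)=\sum_{i=1}^{\bfr} i[n_i(m)-n_{i-1}(m)]$ arising from grouping the powers of $h$. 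The limit of the product is the product of all non-zero eigenvalues of the matrices $\hat{A}_i(m)$, which is strictly positive and finite.

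Combining the two steps,
\[
h^{-2\varsigma(m)} f_h(m) = \det \Gbb_1(m) \cdot h^{-2\varsigma(m)}\det \Gbb_h^{-1}(m) \;\xrightarrow[h\to 0]{}\; \det \Gbb_1(m)\cdot \prod_{i=0}^{\bfr}\prod_{\ell=1}^{n_i-n_{i-1}}\eta^i_\ell(0)(m),
\]
which is a finite, strictly positive real number. This concludes the proposition.

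There is essentially no hard step: the only thing to verify carefully is that $f_h$, defined as a ratio of volume forms (a coordinate-free object), agrees up to an $h$-independent positive smooth factor with the coordinate-dependent determinant $\det \Gbb_h^{-1}$ to which the eigenvalue analysis of the previous subsection applies. Once that compatibility is made explicit, Corollary \ref{aham} delivers both the correct power of $h$ and the positivity of the limit.
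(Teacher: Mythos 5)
Your proof is correct and takes essentially the same approach as the paper: the authors explicitly describe Proposition \ref{prop:finitelimit} as ``a rephrasing of the preceding section,'' i.e.\ Corollary~\ref{aham}, and your argument just carries that rephrasing out, noting that $f_h = \det\Gbb_1\cdot\det\Gbb_h^{-1}$ differs from $\det\Gbb_h^{-1}$ only by the $h$-independent positive smooth factor $\det\Gbb_1$ before applying the eigenbranch asymptotics.
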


We first observe that $\varsigma(m)$ is closely related to the Hausdorff dimension of the sub-Riemannian structure at $m$.

We recall the sub-Riemannian definition of the singular set and define the set $\mathcal{Z}_U=U\cap\mathcal{Z}$. 
        
Setting $n_{-1}(m)=0$, the Hausdorff dimension at $m$ is given by 
$$
Q(m)=\sum_{i=0}^{\bfr}(i+1)[n_i(m)-n_{i-1}(m)],
$$ 
(see \cite{mitchell1985carnot}). 

It follows that
\begin{equation*}
  Q(m)-\varsigma(m)=\sum_{i=0}^{{\bfr}}(i+1)[n_i(m)-n_{i-1}(m)]-\sum_{i=1}^{{\bfr}}i[n_i(m)-n_{i-1}(m)]=d.
\end{equation*}


The dimension $Q$ is constant over any connected component of the equiregular region. 
From now on, we will assume that this constant also does not depend on the component so that there exists 
$\bfQ$ such that 
\[
\begin{split}
Q(m) = \bfQ &\iff m \in M\setminus \Zcal\\
Q(m)\,>\, \bfQ & \iff m\in \Zcal.
\end{split}
\]

We now define the function
\[
  f \defeq \lim_{h\rightarrow 0} h^{2(d-\bfQ)} f_h,
\]
and we prove the following proposition. 

\begin{proposition}\label{singsetoo}
  Under the assumption that the Hausdorff dimension of the (possibly not connected) equiregular region is constant then
  the function $f$ is smooth, positive in the equiregular region and vanishes on the singular region $\mathcal{Z}_U$.
\end{proposition}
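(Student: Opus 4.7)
The plan is to exploit the interplay between two pieces of information already available: the polynomial structure in $h^2$ of $f_h$ given in \eqref{finanstntntn}, and the pointwise asymptotic behavior from Proposition \ref{prop:finitelimit}.

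First, I would fix the local frame on $U$ used to define $f_h$ and recall that
\[
f_h(m)\,=\,\sum_{k\geq 0} a_k(m)\, h^{2k},
\]
with each $a_k$ smooth and non-negative on $M$. Proposition \ref{prop:finitelimit} says that, for every fixed $m$, the rescaled quantity $h^{-2\varsigma(m)} f_h(m)$ has a finite positive limit as $h\to 0$. Because $f_h(m)$ is a polynomial in $h^2$, this forces
\[
a_k(m)\,=\,0 \quad \text{for all } k<\varsigma(m), \qquad a_{\varsigma(m)}(m)\,>\,0.
\]

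Next I would bring in the global assumption on the Hausdorff dimension. Writing $\varsigma_0 \defeq \bfQ - d$, the identity $\varsigma(m)=Q(m)-d$ derived just before Proposition \ref{prop:finitelimit}, together with the hypothesis $Q(m)\geq \bfQ$ (with equality if and only if $m\notin \Zcal_U$), gives $\varsigma(m)\geq \varsigma_0$ \emph{at every} $m\in M$. Combining this with the pointwise vanishing above, we get $a_k(m)=0$ for every $m\in U$ and every $k<\varsigma_0$. Hence, as smooth functions on $U$, $a_k\equiv 0$ whenever $k<\varsigma_0$, and we may factor
\[
f_h(m)\,=\,h^{2\varsigma_0}\sum_{k\geq 0} a_{\varsigma_0+k}(m)\, h^{2k}.
\]

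Multiplying by $h^{2(d-\bfQ)}=h^{-2\varsigma_0}$ and letting $h\to 0$ yields $f(m)=a_{\varsigma_0}(m)$, so $f$ is smooth as the coefficient $a_{\varsigma_0}$ is. For the sign, an equiregular point satisfies $\varsigma(m)=\varsigma_0$, so the pointwise statement gives $a_{\varsigma_0}(m)>0$; a singular point satisfies $\varsigma(m)>\varsigma_0$, so $a_{\varsigma_0}(m)=0$.

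The only delicate point, and the one I would take care to state explicitly, is the passage from the pointwise vanishing $a_k(m)=0$ for $k<\varsigma(m)$ to the uniform vanishing $a_k\equiv 0$ for $k<\varsigma_0$; this is precisely where the uniform lower bound $\varsigma(m)\geq \varsigma_0$ (and therefore the assumption that the Hausdorff dimension is constant on the equiregular region) is used, and it is what allows us to divide by $h^{2\varsigma_0}$ and commute the limit with the (now smooth) remaining series. Smoothness of $f$ across the singular stratum, together with its vanishing there, then follows for free from the smoothness of $a_{\varsigma_0}$.
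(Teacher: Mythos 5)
Your proposal is correct and follows essentially the same route as the paper: use the polynomial expansion \eqref{finanstntntn} together with the pointwise asymptotics of Proposition \ref{prop:finitelimit} and the inequality $\varsigma(m)\geq \bfQ-d$ (with equality exactly off $\Zcal$) to deduce that $a_k\equiv 0$ for $k<\bfQ-d$, whence $f=a_{\bfQ-d}$ is smooth, positive at equiregular points, and zero on $\Zcal_U$. Your write-up is simply a more detailed version of the paper's terse argument; the emphasis you place on the passage from pointwise vanishing to identical vanishing of the low-order coefficients is indeed the crux of the smoothness claim.
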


\begin{proof}
  The positivity and vanishing are direct consequences of Proposition \ref{prop:finitelimit}, the fact that $Q(m)> \bfQ$ if and only if
  $m$ is in the singular region and the expansion (\ref{finanstntntn}). The latter also shows that the coefficient $a_k$
  identically vanishes is $k< \bfQ-d$ and that $f=a_{\bfQ-d}$. This proves the smoothness. 
\end{proof}

We can now define $\dvolsr = f^{-\frac{1}{2}}\,\mathrm{dvol}{g^1}$. This is a smooth volume form in the equiregular region that blows up near the singular one. 

By construction, we have  
\[
  \dvolsr \,=\,\lim_{h\rightarrow 0} h^{\bfQ-d} \mathrm{dvol}{g^h}.
\]

\begin{remark}
Assume that the singular set is stratified by submanifolds along which the growth vector is constant. Then, 
following the same strategy, it should be possible to define a natural volume form associated with the approximation scheme on any stratum.
\end{remark}

\subsection{Second Approach: Adapted Frame}\label{rlnwzpopp}
In the preceding section, we defined a volume form that is adapted to the approximation scheme by working in any local frame. Instead, in this section, we will
borrow the notion of adapted basis from \cite{barilari2013formula}. This will enable us to compare our volume form to Popp's volume form.   

We first define an adapted basis.
\begin{definition}\label{defadfr}
  We say a local frame $Z_1,...,Z_d$ is adapted at the point $m$, if, in the vicinity of $m$, $Z_1,...,Z_{n_i}$ is a local frame for $D_i$, for any $0\leq i\leq \bfr$,
  where $n_i=\text{dim}(D_i(m)),$ and $Z_1(m),...,Z_{n_0}(m)$ are orthonormal. 
\end{definition}

\subsubsection{Recovering Corollary \ref{aham}}
Fix $m$ and some adapted frame $Z_1,...,Z_d$ at $m$. As before, let $A_{(i)}$ be the $N_i\times d$ matrix defined such that the $j$-$th$ row of $A_{(i)}$
is the coefficients of $X^{ij}$ in the adapted frame. 
 
We describe the matrix $A_{(i)}$ for $0\leq i\leq \bfr$. The rows of the matrix $A_{(i)}$ are the vectors $X^{ij}$ for $1\leq j\leq N_i$ written in the adapted frame $Z_1,...,Z_d$. For each $1\leq j\leq N_i$, the vector $X^{ij}$ is in $D_i$ which has $\{Z_1,...,Z_{n_i}\}$ as a basis. Thus, the last $d-n_i$ columns of the matrix $A_{(i)}$ are zero columns. More precisely, if we denote by $a^k_{ij}$ the coefficient of $Z_k$ in the expression of $X^{ij}$;
$$
X^{ij}=\sum_{k=1}^d a^k_{ij}Z_k,
$$ 
then $a^k_{ij}=0$ for all $k>n_i$ and $A_{(i)}$ has the following expression 
\begin{equation}
    \label{characofa_(i)}A_{(i)}=
\begin{pmatrix}
    a^1_{i1}& ... & a^{n_i}_{i1}&0&...&0 \\
    a^1_{i2}& ... & a^{n_i}_{i2}&0&...&0 \\
    \vdots&\ddots&\vdots &0&...&0 \\
    a^1_{iN_i}& ... & a^{n_i}_{iN_i}&0&...&0
\end{pmatrix}.
\end{equation} 
Now, for any $0\leq i\leq \bfr$ and $1\leq j\leq N_i$ define the coefficients $\bar{a}^k_{ij}$ as follows: 
$$
\bar{a}^k_{ij}=\begin{cases}
    0 & 1\leq k\leq n_{i-1} \\
    a^k_{ij} & n_{i-1}+1\leq k\leq n_i
\end{cases},
$$ 
and define the matrix $\bar A_{(i)}:=(\bar{a}^k_{ij})_{1\leq j\leq N_i, 1\leq k\leq n_i}$ with $N_i$ rows and $n_i$ columns 
($\bar A_{(i)}$ denotes the representation matrix of $X^{ij}$ in $D_i$ mod $D_{i-1}$, letting all the coefficients that correspond to $Z_1,...,Z_{n_i}$ equal $0$). 

We introduce the matrix $M_i$ as the nonzero block of $\trans{\bar A_{(i)}}\bar A_{(i)}$ so that we have the following diagram.

\begin{equation*}
 \trans {\bar A_{(i)}} \bar A_{(i)}=
   \begin{pNiceMatrix}[margin] 
0&0&0&0 \\
\Block[draw,fill=red!15,rounded-corners]{1-4}{}
*&*&*&* \\
0&0&0&0 \\
\end{pNiceMatrix}
\begin{pNiceMatrix}[margin]
0 & \Block[draw,fill=red!15,rounded-corners]{4-1}{}* & 0  \\
0 & * & 0  \\
0 &   * & 0  \\
0 & * & 0 \\
\end{pNiceMatrix}=\begin{pNiceMatrix}[margin] 
0&0&0 \\
0&\Block[draw,fill=blue!15,rounded-corners]{1-1}{}*&0 \\
0&0&0 \\
\end{pNiceMatrix}.
\end{equation*}

The red-shaded region in $\bar A_{(i)}$ corresponds to the vectors $Z_{n_{i-1}+1},...,Z_{n_i}$, and $M_i$ is the blue-shaded sub-matrix.

\begin{proposition}\label{recovv}
 Fix $m\in M$.  If we denote by $\til{\Gbb}_h$ the representation Gram matrix of $g^h$ with respect to the adapted frame $Z_1,...,Z_d$, then, near $h=0$, we have  

\begin{equation}
        \label{analogue}\mathrm{det}(\til{\Gbb}_h^{-1})\sim h^{2\varsigma(m)}\prod_{i=1}^{\bfr}\mathrm{det}(M_i).
    \end{equation}
  \end{proposition}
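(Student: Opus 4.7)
The plan is to deduce the proposition from Corollary \ref{aham} by exploiting the special block structure of the matrices $A_{(i)}$ in the adapted frame (\ref{characofa_(i)}). Corollary \ref{aham} gives, in any frame,
\[
\det(\tilde \Gbb_h^{-1}) = h^{2\varsigma(m)} \tilde f_h(m), \qquad \tilde f_h(m) \xrightarrow[h\to 0]{} \prod_{j=0}^{\bfr} P_j(m),
\]
where $P_j(m)$ is the product of the non-zero eigenvalues of $\hat A_{(j)} = \restr{\trans A_{(j)} A_{(j)}}{\mv_{j-1}^{\perp}}$. So the task reduces to identifying $P_j(m)$ with $\det M_j(m)$ for $j\geq 1$ and showing $P_0(m) = 1$.

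The key step is an induction on $j$ proving that, in the adapted frame, $\mv_j = \operatorname{span}(e_1,\ldots,e_{n_j})$. The base case uses (\ref{characofa_(i)}): $A_{(0)}$ has zero columns beyond index $n_0$ and its first $n_0$ columns have rank $n_0$ (since $\{X^{0\ell}(m)\}$ spans $\Dcal_0(m)$), so $\mv_0 = (\ker A_{(0)})^{\perp} = \operatorname{span}(e_1,\ldots,e_{n_0})$. For the inductive step, note that $\trans A_{(j)} A_{(j)}$ has zero entries outside rows and columns $\leq n_j$; since $\mv_{j-1}^\perp = \operatorname{span}(e_{n_{j-1}+1},\ldots,e_d)$ by the induction hypothesis, the restriction $\hat A_{(j)}$ is a submatrix whose only non-zero block sits in the $(n_j - n_{j-1}) \times (n_j - n_{j-1})$ positions indexed by $n_{j-1}+1,\ldots,n_j$. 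Comparing with the definitions of $\bar A_{(j)}$ and $M_j$, this block is precisely $M_j$. Because the columns $n_{j-1}+1,\ldots,n_j$ of $A_{(j)}$ span $\Dcal_j/\Dcal_{j-1}$ (of dimension $n_j - n_{j-1}$), $M_j$ has full rank, so $\operatorname{Im}(\hat A_{(j)}) = \operatorname{span}(e_{n_{j-1}+1},\ldots,e_{n_j})$ and the induction closes.

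With this identification, the product of non-zero eigenvalues of $\hat A_{(j)}$ is simply $\det M_j(m)$, giving $P_j(m) = \det M_j(m)$ for every $j = 0,\ldots,\bfr$. It remains to check that $P_0(m) = \det M_0(m) = 1$. For this, one uses the definition (\ref{g^inf}) of $g^0$ together with the minimum-norm formula for the underdetermined linear system $\trans{\bar A_{(0)}}(m) u = v$: this yields that the Gram matrix of $\restr{g^0_m}{\Dcal_0}$ in the basis $(Z_1(m),\ldots,Z_{n_0}(m))$ equals $M_0(m)^{-1}$. The orthonormality required in Definition \ref{defadfr} then forces $M_0(m) = I_{n_0}$.

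Combining these ingredients, $\tilde f_h(m) \to \prod_{i=0}^{\bfr} \det M_i(m) = \prod_{i=1}^{\bfr} \det M_i(m)$, which gives the claimed equivalent. The heart of the argument, and the only place where care is really needed, is the inductive identification of the subspaces $\mv_j$ in the adapted frame; I expect the main bookkeeping difficulty to be keeping the two different truncations straight: the one built into $\bar A_{(j)}$ (zeroing the first $n_{j-1}$ columns) and the one hidden in the restriction to $\mv_{j-1}^\perp$ in the definition of $\hat A_{(j)}$. Once these are seen to agree, the determinant identity follows at once from the block structure.
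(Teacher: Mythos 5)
Your proof is correct, but it runs in the opposite direction from the paper's. The paper proves Proposition~\ref{recovv} directly, without reference to the spectral machinery of section~\ref{specthapp}: it writes $\til\Gbb_h^{-1}=\sum_i h^{2i}\,\trans A_{(i)}A_{(i)}$, exploits the zero columns of $A_{(i)}$ (formula~(\ref{characofa_(i)})) to display $\til\Gbb_h^{-1}$ in the block form~(\ref{expoftilg}), factors it as $G_{(1)}(h)\bigl(I_d+(G_{(1)}(h))^{-1}G_{(2)}(h)\bigr)$ with $G_{(1)}(h)=\mathrm{diag}(M_0,h^2M_1,\dots,h^{2\bfr}M_{\bfr})$, and observes that the correction term converges to a strictly lower block-triangular matrix, so that its determinant tends to $1$. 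The proposition then follows from $\det G_{(1)}(h)$ alone. You instead invoke Corollary~\ref{aham} (which already asserts $\det(\til\Gbb_h^{-1})\sim h^{2\varsigma}\prod_j\det\hat A_{(j)}\bigr|_{\text{nonzero part}}$) and carry out the extra work of identifying the spaces $\mv_j$ in the adapted frame and matching the nonzero block of $\hat A_{(j)}$ with $M_j$. Your inductive identification $\mv_j=\operatorname{span}(e_1,\dots,e_{n_j})$ is sound, and your minimum-norm argument for $M_0=I_{n_0}$ is a legitimate alternative to Proposition~\ref{krononono} (and in fact patches a small presentational gap in the paper, where~(\ref{lkaymta}) drops the $\det M_0$ factor before Proposition~\ref{krononono} is established). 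What your route buys is conceptual: it shows explicitly that the ``spectral approach'' and the ``adapted frame approach'' compute the same thing and that the $\mv_j$ filtration of section~\ref{specthapp} is, in an adapted frame, the coordinate filtration. What the paper's route buys is independence from Theorem~\ref{thm:specAh}: its proof of Proposition~\ref{recovv} is self-contained and a few lines shorter, which is why the paper can afterwards state in a remark that the proposition ``recovers'' Corollary~\ref{aham}. Both are valid; yours is genuinely different and slightly more work, but makes the cross-consistency of the two sections manifest.
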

  
\begin{proof}  

By theorem \ref{Tgitoa}, We have that 
\begin{equation}\label{GitoA2}
     {\til{\Gbb}_h}^{-1}=\sum_{i=0}^{\bfr}h^{2i}\,\trans A_{(i)} A_{(i)}.
     \end{equation}

Using the characteristics of the matrix $A_{(i)}$ (description (\ref{characofa_(i)})), formula (\ref{GitoA2}) implies that 

\begin{equation}\label{expoftilg}   
    \til {\Gbb}_h^{-1}= \left(
    \begin{array}{c|c|c|c|c}
      M_0+O(h^2) & O(h^2) &...&O(h^{2({\bfr}-1)})& O(h^{2{\bfr}})\\
      \hline
      O(h^2) & h^2M_1+O(h^4) &O(h^4)&...&\vdots\\
      \hline 
        O(h^4)& O(h^4)&h^4M_2+O(h^6)&...&\vdots\\
      \hline
      \vdots&...&...&\ddots&O(h^{2{\bfr}}) \\
      \hline
     O(h^{2{\bfr}})&...&...&O(h^{2{\bfr}})& h^{2r}M_{\bfr}
    \end{array}
    \right).
\end{equation} 

Let  
$$
G_{(1)}(h)=\left(
    \begin{array}{c|c|c|c|c}
      M_0 & 0 &...&0& 0\\
      \hline
      0 & h^2M_1 &0&...&\vdots\\
      \hline 
        0& 0&h^4M_2&...&\vdots\\
      \hline
      \vdots&...&...&\ddots&0 \\
      \hline
     0&...&...&0& h^{2{\bfr}}M_{\bfr}
    \end{array}
    \right),$$ 

and 
$$
G_{(2)}(h)=\til {\Gbb}_h^{-1}-G_{(1)}(h)=\left(
    \begin{array}{c|c|c|c|c}
     O(h^2) & O(h^2) &...&O(h^{2({\bfr}-1)})& O(h^{2{\bfr}})\\
      \hline
      O(h^2) & O(h^4) &O(h^4)&...&\vdots\\
      \hline 
        O(h^4)& O(h^4)&O(h^6)&...&\vdots\\
      \hline
      \vdots&...&...&\ddots&O(h^{2{\bfr}}) \\
      \hline
     O(h^{2{\bfr}})&...&...&O(h^{2{\bfr}})& 0
    \end{array}
    \right).$$ 

    Moreover, we have that $M_i$ is invertible, and so $G_{(1)}(h)$ also is. 
    Indeed, $M_i$ is an $n_i-n_{i-1}$ matrix with 
$$
\text{rank}(M_i)=\text{rank}\left(\trans {\bar A_{(i)}} \bar A_{(i)}\right)=\text{rank}\left(\bar A_{(i)}\right).
$$ 

As $n_i\leq N_i$, then 
$$\text{rank}\left(\bar A_{(i)}\right)\leq n_i-n_{i-1}.$$ 

Since the frame is adapted, $\{Z_{n_{i-1}+1},...,Z_{n_i}\}$ is a basis of a linear complement for $D_{i-1}$ in $D_i$, then 
$\text{rank}(\bar A_{(i)})= n_i-n_{i-1}$. 

Thus, $\text{rank}(M_i)=n_i-n_{i-1}$, and so it has full \text{rank} (in particular, it is invertible).\\

Therefore, we have 
\begin{equation}
        \label{analogto} 
\begin{split}
            \text{det}(\til {\Gbb}_h^{-1})&=\text{det}(G_{(1)}(h)(I_{d}+(G_{(1)}(h))^{-1}G_{(2)}(h)))\\
&=\text{det}(G_{(1)}(h))\text{det}(I_{d}+(G_{(1)}(h))^{-1}G_{(2)}(h)).
        \end{split} 
    \end{equation} 

Now, as $(G_{(1)}(h))^{-1}G_{(2)}(h)$ is simply multiplying the $i^{th}$ row of $G_{(2)}(h)$ by $h^{-2i}M_i^{-1}$ for any $0\leq i\leq {\bfr}$, 
then, the limit of $(G_{(1)}(h))^{-1}G_{(2)}(h)$, which we denote by $\bar G$, is a strictly lower triangular matrix. 


It follows that, near $h=0$ 
\begin{equation}\label{ssddssdd}
        \text{det}(I_{d}+(G_{(1)}(h))^{-1}G_{(2)}(h))\sim 1,
      \end{equation}
so that 
      \begin{equation}
        \label{lkaymta}\text{det}(\til\Gbb_h^{-1})\sim \text{det}(G_{(1)}(h))=h^{2\varsigma(m)}\prod_{i=1}^{\bfr}\text{det}(M_i).
    \end{equation}
\end{proof}

\begin{remark}
  Proposition \ref{recovv} recovers corollary \ref{aham} directly. It can however be remarked that the spectral approach gives more information 
that could be useful in order to obtain finer properties of the approximating Riemannian structure.
\end{remark}

\subsubsection{Comparison To Popp's Volume}
We first recall the definition of Popp's volume following \cite{barilari2013formula}, where the formula of Popp's volume is given in terms of any adapted frame
of the tangent bundle. We then compare our volume form induced by the approximation scheme to Popp's volume.
 
Fix an adapted local frame $Z_1,...,Z_d$. For $j=1,...,{\bfr}$, we define the adapted structure constants $b_{i_1...i_j}^l\in\mathcal{C}^\mathcal{1}(M)$ as follows:

\begin{equation}
     \label{bi's} [Z_{i_1},[Z_{i_2},...,[Z_{i_{j}},Z_{i_{j+1}}]...]]=\sum_{l=n_{j-1}+1}^{n_j}b_{i_1i_2...i_j}^lZ_l\;\text{mod} D^{j-1},  
 \end{equation} where $1\leq i_1,...,i_j\leq n_0$. We define the $n_j-n_{j-1}$ dimensional square positive definite matrix $B_j$ as follows \begin{equation}
     \label{B_j's} [B_j]^{hl}=\sum_{i_1,...,i_j=1}^{n_0} b_{i_1i_2...i_j}^hb_{i_1i_2...i_j}^l, \;\;\; j=0,...,m,
\end{equation} where $B_0$ is the $n_0\times n_0$ identity matrix.
 
 \begin{definition}[Barilari-Rizzi\cite{barilari2013formula}]
     Let $\nu_1,...,\nu_d$ be the dual frame to $Z_1,...,Z_d$. Then, Popp's volume is given by 
\begin{equation}
         \label{popbari} d\mathcal{P}=\dfrac{1}{\sqrt{\prod_j\text{det}(B_j)}}\nu_1\wedge...\wedge\nu_d.
     \end{equation}
\end{definition} 
Hereafter, $\mathcal{P}$ will always denote the Popp's volume.

In order to compare our volume to Popp's, we need to relate these structure coefficients and the matrices $M_i$.
We begine with $M_0$.

\begin{proposition}\label{krononono}
The matrix $M_0$ defined with respect to the adapted frame $Z_1,...,Z_d$ is the identity matrix; that is $M_0=I_{n_0}$. 
\end{proposition}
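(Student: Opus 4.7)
The plan is to identify $M_0$ with the inverse Gram matrix of the restriction $g^0|_{\mathcal{D}_0}$ in the basis $Z_1(m),\ldots,Z_{n_0}(m)$, and then invoke the orthonormality condition built into the definition of an adapted frame.

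First I would unpack definitions. Since each $X^{0j}$ belongs to $\mathcal{D}_0$ and the first $n_0$ frame vectors span $\mathcal{D}_0(m)$, every coefficient $a^k_{0j}$ vanishes for $k>n_0$, so the only nonzero columns of $A_{(0)}$ are the first $n_0$. With $n_{-1}=0$, the matrix $\bar{A}_{(0)}$ is exactly the $N_0\times n_0$ block $(a^k_{0j})_{1\le j\le N_0,\,1\le k\le n_0}$, and
\[
M_0 = \trans{\bar{A}_{(0)}}\,\bar{A}_{(0)},\qquad (M_0)_{kl}=\sum_{j=1}^{N_0}a^k_{0j}a^l_{0j}.
\]

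Next I would re-run the computation of Theorem \ref{Tgitoa} purely at the sub-Riemannian level. Consider the surjective map $\sigma_m^0:\R^{N_0}\to\mathcal{D}_0(m)$ with $\sigma_m^0(\bfu)=\sum_j u_j X^{0j}_m$; its matrix in the basis $Z_1(m),\ldots,Z_{n_0}(m)$ is $\trans{\bar{A}_{(0)}}$. The same orthogonal-projection argument as in the proof of Theorem \ref{Tgitoa}, applied with the standard Euclidean inner product on $\R^{N_0}$ in place of $q_h$, shows that for any $X\in\mathcal{D}_0(m)$ with column-vector $V$ in this basis,
\[
g^0_m(X)=\trans{V}\,M_0^{-1}\,V.
\]
Thus $M_0^{-1}$ is precisely the Gram matrix of $g^0|_{\mathcal{D}_0}$ in the basis $Z_1(m),\ldots,Z_{n_0}(m)$. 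The invertibility of $M_0$ is exactly the rank argument already used in the proof of Proposition \ref{recovv}: $\bar{A}_{(0)}$ has full column rank $n_0$ because the images $X^{0j}_m$ span $\mathcal{D}_0(m)$.

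The proposition then follows immediately: by Definition \ref{defadfr}, the vectors $Z_1(m),\ldots,Z_{n_0}(m)$ are $g^0$-orthonormal, so this Gram matrix is $I_{n_0}$ and hence $M_0=I_{n_0}$.

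No serious obstacle arises; the statement is essentially definitional once one realises that $M_0$ is the same object as $(G^0|_{\mathcal{D}_0})^{-1}$ in the adapted basis. An alternative would be to take the $h\to 0$ limit of the top-left $n_0\times n_0$ block of $\tilde{\Gbb}_h^{-1}$ using the block description (\ref{expoftilg}) and a Schur complement, but that route requires controlling the blow-up of the lower-right blocks as $h\to 0$, which is cumbersome compared with the direct identification above.
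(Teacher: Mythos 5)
Your proof is correct, and it takes a genuinely different route from the paper's. The paper derives $M_0=I_{n_0}$ from the decomposition $\til{\Gbb}_h^{-1} = G_{(1)}(h)(I_d + (G_{(1)}(h))^{-1}G_{(2)}(h))$: it inverts, lets $h\to 0$, and uses that $\bar G=\lim_{h\to0}(G_{(1)}(h))^{-1}G_{(2)}(h)$ is strictly lower triangular (hence nilpotent, with $\restr{\bar G^k}{D_0}=0$) to get $\restr{(I_d+\bar G)^{-1}}{D_0}=I_{n_0}$. You instead work entirely at the $g^0$ level: rerunning the orthogonal-projection computation of Theorem~\ref{Tgitoa} for $\sigma^0_m:\R^{N_0}\to\Dcal_0(m)$ with the standard Euclidean form in place of $q_h$ identifies $M_0^{-1}$ as the Gram matrix of $g^0|_{\Dcal_0}$ in the basis $Z_1,\dots,Z_{n_0}$, and the orthonormality clause of Definition~\ref{defadfr} then gives $M_0^{-1}=I_{n_0}$ at once; invertibility of $M_0$ comes from the same full-column-rank argument already used in Proposition~\ref{recovv}. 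Your route is shorter, avoids the $h\to 0$ limit and the nilpotency analysis entirely, and makes the orthonormality hypothesis visibly load-bearing --- which is a real gain in clarity, since in the paper's proof that hypothesis enters only implicitly (the $D_0$-block of $\til{\Gbb}_h G_{(1)}(h)$ is $(\til{\Gbb}_h)_{00}M_0$, and identifying its $h\to0$ limit with $M_0$ already uses that $(\til{\Gbb}_h)_{00}\to I_{n_0}$, i.e.\ that the adapted frame is $g^0$-orthonormal on $\Dcal_0$).
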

        
\begin{proof}
     Expansion (\ref{ssddssdd}) implies that for $h$ small enough, $I_{d}+(G_{(1)}(h))^{-1}G_{(2)}(h)$ is invertible. Since
        \begin{equation}
            \label{tkngtheinv} \til {\Gbb}_h^{-1}=G_{(1)}(h)(I_{d}+(G_{(1)}(h))^{-1}G_{(2)}(h)),
        \end{equation}
              we obtain that 
$$
\til{\Gbb}_hG_{(1)}(h)=(I_{d}+(G_{(1)}(h))^{-1}G_{(2)}(h))^{-1}.$$
Thus, 
$$
M_0=\restr{\til{\Gbb}_hG_{(1)}(h)}{D_0}=\restr{(I_{d}+(G_{(1)}(h))^{-1}G_{(2)}(h))^{-1}}{D_0}.
$$
 Since the limit of $I_{d}+(G_{(1)}(h))^{-1}G_{(2)}(h)$ (that we denote by $I_d+\Bar{G}$) exists and is invertible, we get that 
$$
M_0=\restr{(I_{d}+\Bar{G})^{-1}}{D_0}.
$$

Now, as $\Bar{G}$ is a strictly lower triangular matrix, it is nilpotent, say of index $n$. So, 
$$
(I_{d}+\Bar{G})^{-1}=I_{d}+\sum_{k=1}^{n-1}(-1)^k\Bar{G}^k.
$$ 
Therefore, since for any $k$, $\restr{\Bar{G}^k}{D_0}=0$ we get that $M_0=I_{n_0}$. 
\end{proof}

Now we compare the matrices $M_i$ and $B_i$ defined by \ref{B_j's}.  
  \begin{theorem}
    Denote by $(\mu_{(i),\kappa_1\kappa_2})_{1\leq \kappa_1,\kappa_2\leq n_i-n_{i-1}}$ the entries of the matrix $M_i$.
    Then, for every $1\leq i\leq {\bfr}$, and any $1\leq \kappa_1,\kappa_2\leq n_i-n_{i-1}$, we have
    $$
    \mu_{(i),\kappa_1\kappa_2}=[B_i]^{\kappa_1\kappa_2},
    $$
    where $[B_i]^{\kappa_1\kappa_2}$ are the entries of the matrix $B_i$ defined by (\ref{B_j's}).
  \end{theorem}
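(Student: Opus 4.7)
The plan is to reduce entries of $M_i$ to sums of products of adapted structure constants via the tensoriality of the graded Lie bracket modulo lower strata, and then collapse the resulting double sum by means of the identity $M_0=I_{n_0}$ proven in Proposition \ref{krononono}.

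\textbf{Step 1: tensorial form of iterated brackets at $m$.} Writing $I_j=(k_1,\ldots,k_{i+1})$ for the multi-index encoding $X^{ij}$, I will establish by induction on $i$ that, as vector fields in a neighbourhood of $m$,
\begin{equation*}
X^{ij} \;=\; \sum_{l_1,\ldots,l_{i+1}=1}^{n_0} a^{l_1}_{0k_1}\cdots a^{l_{i+1}}_{0k_{i+1}}\,[Z_{l_1},[Z_{l_2},\ldots,[Z_{l_i},Z_{l_{i+1}}]\cdots]]  \pmod{D_{i-1}}.
\end{equation*}
The inductive engine is the tensoriality of the bracket modulo $D_{i-1}$: for $X\in D_0$, $Y\in D_{i-1}$ and $f,g\in\Cinf(M)$, the Leibniz identities $[fX,Y]=f[X,Y]-(Yf)X$ and $[X,gY]=g[X,Y]+(Xg)Y$ reduce to $f[X,Y]$ and $g[X,Y]$ modulo $D_{i-1}$, since both remainders lie in $D_{i-1}$ ($X\in D_0\subset D_{i-1}$ and $Y\in D_{i-1}$). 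Peeling off the expansion $X^{0k_\alpha}=\sum_{l_\alpha} a^{l_\alpha}_{0k_\alpha} Z_{l_\alpha}$ one factor at a time, each Leibniz correction is a vector field genuinely in $D_{i-1}$, and the display follows by induction.

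\textbf{Step 2: extracting the coefficients and collapsing the sum.} Applying the defining identity \eqref{bi's} of the adapted structure constants to the inner $Z$-bracket, evaluating at $m$, and matching with $X^{ij}=\sum_k a^k_{ij}Z_k$ yields, for $1\leq \kappa\leq n_i-n_{i-1}$,
\begin{equation*}
a^{n_{i-1}+\kappa}_{ij}(m)=\sum_{l_1,\ldots,l_{i+1}=1}^{n_0} a^{l_1}_{0k_1}(m)\cdots a^{l_{i+1}}_{0k_{i+1}}(m)\, b^{n_{i-1}+\kappa}_{l_1\cdots l_{i+1}}(m).
\end{equation*}
Substituting this into $\mu_{(i),\kappa_1\kappa_2}=\sum_{j} a^{n_{i-1}+\kappa_1}_{ij}\,a^{n_{i-1}+\kappa_2}_{ij}$, and extending the sum over $j$ to all multi-indices in $\{1,\ldots,N_0\}^{i+1}$ (which is permitted by Remark \ref{zerovf} since adding zero brackets does not alter $\trans A_i A_i$), a rearrangement delivers
\begin{equation*}
\mu_{(i),\kappa_1\kappa_2}\;=\;\sum_{\substack{l_1,\ldots,l_{i+1}\\l'_1,\ldots,l'_{i+1}}} b^{n_{i-1}+\kappa_1}_{l_1\cdots l_{i+1}}\, b^{n_{i-1}+\kappa_2}_{l'_1\cdots l'_{i+1}}\;\prod_{\alpha=1}^{i+1}\Bigl(\sum_{k=1}^{N_0} a^{l_\alpha}_{0k}(m)\,a^{l'_\alpha}_{0k}(m)\Bigr).
\end{equation*}
Proposition \ref{krononono} identifies each parenthesised factor with $[M_0]_{l_\alpha l'_\alpha}=\delta_{l_\alpha l'_\alpha}$, forcing $l'_\alpha=l_\alpha$ for every $\alpha$ and collapsing the expression to $\sum_{l_1,\ldots,l_{i+1}=1}^{n_0} b^{n_{i-1}+\kappa_1}_{l_1\cdots l_{i+1}}\,b^{n_{i-1}+\kappa_2}_{l_1\cdots l_{i+1}}$, which is exactly $[B_i]^{\kappa_1\kappa_2}$ in the convention of \eqref{B_j's}.

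\textbf{The main obstacle.} The crux of the argument is Step 1: a naive expansion of the iterated bracket of the $X^{0k}$'s via repeated Leibniz rules produces a proliferating collection of terms involving derivatives of the coefficients $a^l_{0k}$, and the substantive verification is that every such correction is a vector field belonging to $D_{i-1}$, hence killed by the quotient. This is the manifestation of the tensorial (symbolic) nature of the graded Lie bracket on $\bigoplus_i D_i/D_{i-1}$, which is the nilpotent Lie algebra associated with the sub-Riemannian structure at $m$. Once this inductive tensoriality is in place, Step 2 is pure bookkeeping.
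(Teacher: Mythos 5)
Your argument is correct and follows essentially the same route as the paper: expand the iterated brackets of the $X^{0k}$ in terms of iterated $Z$-brackets, extract the adapted structure constants modulo $D_{i-1}$, and collapse the resulting double sum using $M_0=I_{n_0}$ (Proposition~\ref{krononono}) to produce Kronecker deltas. Your Step~1 is in fact slightly more careful than the paper's, which states the expansion~\eqref{[x_j,x_k]} as a bare equality rather than modulo $D_{i-1}$; your inductive Leibniz argument showing that every derivative correction lands in $D_{i-1}$ is the precise justification of that step.
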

  
        \begin{proof}
          Denote by $J:=\{1,...,N_0\}$. For any $j\in J$, write $X^{0j}=\sum_{l=1}^{n_0}a_{(0)j}^lZ_l$. Then, for $j_1,...,j_{i+1}\in J$, we have
          \begin{equation}
                \label{[x_j,x_k]} [X^{0j_1},[X^{0j_2}...[X^{0j_{i}},X^{0j_{i+1}}]...]=\sum_{l_1,...,l_{i+1}=1}^{n_0}a_{(0){j_1}}^{l_1}...a_{(0){j_{i+1}}}^{l_{i+1}}[Z_{l_1},[Z_{l_2},...[Z_{l_i},Z_{l_{i+1}}]...].
              \end{equation}
              Then, for any $1\leq i\leq {\bfr}$, any $n_{i-1}<n\leq n_i$, and any $(j_1,...,j_{i+1})\in J^{i+1}$, we get that the coefficient of $[X^{0j_1},[X^{0j_2}...[X^{0j_{i}},X^{0j_{i+1}}]...]$ in $Z_n\in D_i$ mod $D_{i-1}$ is given by
              \begin{equation}
                \label{a_1ij^n} a_{(i),(j_1,...,j_{i+1})}^n=\sum_{l_1,...,l_{i+1}=1}^{n_0}a_{(0){j_1}}^{l_1}...a_{(0){j_{i+1}}}^{l_{i+1}}b_{l_1,...,l_{i+1}}^n,
              \end{equation}
              where the $b's$ are the adapted structure constants defined in (\ref{bi's}). Denote by $\til J:=\{1,...,n_0\}$.
              Therefore, for any $1\leq i\leq {\bfr}$ and any $1\leq \kappa_1,\kappa_2\leq n_i-n_{i-1}$, we compute
              \begin{equation}\label{ttaretzida}
                \begin{split}
                  \mu_{(i),\kappa_1\kappa_2}&=\sum_{(j_1,\cdots,j_{i+1})\in {\til J}^{i+1}}a_{(i),(j_1,...,j_{i+1})}^{\kappa_1}a_{(i),(j_1,...,j_{i+1})}^{\kappa_2}\\
                  &=\sum_{\underset{m_1,...,m_{i+1}\in\til J}{l_1,...,l_{i+1}\in\til J}}
                  \sum_{{(j_1,...,j_{i+1})\in {\til J}^{i+1}}}a_{(0){j_1}}^{l_1}...a_{(0){j_{i+1}}}^{l_{i+1}}b_{l_1,...,l_{i+1}}^{\kappa_1}a_{(0){j_1}}^{m_1}...a_{(0){j_{i+1}}}^{m_{i+1}}b_{m_1,...,m_{i+1}}^{\kappa_2}\\
                  &=\sum_{\underset{m_1,...,m_{i+1}\in\til J}{l_1,...,l_{i+1}\in\til J}}\left(\sum_{j_1\in \til J}a_{(0){j_1}}^{l_1}
a_{(0){j_1}}^{m_1}\right)...\left(\sum_{j_{i+1}\in \til J}a_{(0){j_{i+1}}}^{l_{i+1}}
a_{(0){j_{i+1}}}^{m_{i+1}}\right)b_{l_1,...,l_{i+1}}^{\kappa_1}b_{m_1,...,m_{i+1}}^{\kappa_2}\\
&=\sum_{\underset{m_1,...,m_{i+1}\in\til J}{l_1,...,l_{i+1}\in\til J}}\delta_{l_1}^{m_1}...\delta_{l_{i+1}}^{m_{i+1}}b_{l_1,...,l_{i+1}}^{\kappa_1}b_{m_1,...,m_{i+1}}^{\kappa_2}\\
&=\sum_{l_1,...,l_{i+1}\in\til J}b_{l_1,...,l_{i+1}}^{\kappa_1}b_{l_1,...,l_{i+1}}^{\kappa_2}=[B_i]^{\kappa_1\kappa_2},
                \end{split}
            \end{equation}
         where the Kronecker deltas are due to proposition \ref{krononono}.
\end{proof}

    Recall that $\mathcal{P}$ denotes the Popp's volume. Comparing (\ref{lkaymta}) and (\ref{popbari}), we deduce the following corollary.
    \begin{corollary}
        If we denote by $\mathcal{P}_o$ the volume obtained from our approximation scheme, then we have \begin{equation}
            \label{relationbpvaov} \mathcal{P}=\mathcal{P}_o.
        \end{equation}
    \end{corollary}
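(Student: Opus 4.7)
My plan is to unpack the definition of $\mathcal{P}_o$ in an adapted frame and match it term by term with Popp's formula \eqref{popbari}, using the two identifications just established between the blocks $M_i$ and the structure matrices $B_i$.

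First, I would fix $m$ in the equiregular region and pick an adapted local frame $Z_1,\ldots,Z_d$ near $m$ with dual coframe $\nu_1,\ldots,\nu_d$. By \eqref{genvolf} applied in this frame, $\mathrm{dvol}_{g^h} = (\det \tilde\Gbb_h)^{1/2}|\nu_1\wedge\cdots\wedge\nu_d|$, and from the construction of $\mathcal{P}_o$ (equivalently, $\dvolsr$) we have
\[
\mathcal{P}_o \,=\, \lim_{h\to 0} h^{\bfQ-d}\,\mathrm{dvol}_{g^h} \,=\, \lim_{h\to 0} h^{\varsigma(m)}\sqrt{\det\tilde\Gbb_h}\,|\nu_1\wedge\cdots\wedge\nu_d|,
\]
where I use that $\bfQ - d = \varsigma(m)$ on the equiregular region, as observed right after Proposition \ref{prop:finitelimit}.

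Next I would invoke Proposition \ref{recovv}, which gives $\det\tilde\Gbb_h^{-1} \sim h^{2\varsigma(m)}\prod_{i=1}^{\bfr}\det(M_i)$ near $h=0$. Taking reciprocals and square roots yields
\[
\lim_{h\to 0} h^{\varsigma(m)}\sqrt{\det\tilde\Gbb_h} \,=\, \frac{1}{\sqrt{\prod_{i=1}^{\bfr}\det(M_i)}}.
\]
Then I would use the theorem immediately preceding the corollary, which identifies the entries of $M_i$ with those of $B_i$ for every $1\leq i\leq \bfr$, so $\det(M_i)=\det(B_i)$ in that range. Combined with Proposition \ref{krononono} (giving $M_0=I_{n_0}$) and the fact that $B_0=I_{n_0}$ by definition, one has $\det(B_0)=1$, hence
\[
\prod_{i=1}^{\bfr}\det(M_i) \,=\, \prod_{i=0}^{\bfr}\det(B_i).
\]

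Assembling these identities gives
\[
\mathcal{P}_o \,=\, \frac{1}{\sqrt{\prod_{j=0}^{\bfr}\det(B_j)}}\,|\nu_1\wedge\cdots\wedge\nu_d|,
\]
which is exactly the formula \eqref{popbari} for $\mathcal{P}$. There is no real obstacle here: the entire content sits in Proposition \ref{recovv}, Proposition \ref{krononono}, and the preceding theorem on $M_i=B_i$. The only point meriting care is bookkeeping the index $i=0$ (absent from the product in \eqref{lkaymta} because $M_0=I$ contributes trivially, while in Popp's formula the product starts at $j=0$ with $B_0=I$), and checking that the identification is independent of the adapted frame chosen, which is automatic since both sides are intrinsic volume forms.
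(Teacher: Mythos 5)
Your proof is correct and takes exactly the route the paper intends: the paper itself gives no proof beyond the remark ``Comparing (\ref{lkaymta}) and (\ref{popbari}), we deduce the following corollary,'' and your write-up simply makes that comparison explicit via Proposition \ref{recovv}, the $M_i=B_i$ identification, and the $\det(B_0)=1$ bookkeeping. Nothing to add; the only cosmetic point is that invoking Proposition \ref{krononono} is not strictly needed for the index shift (only $\det(B_0)=1$ is), though it does enter earlier in establishing $M_i=B_i$.
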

    We give a typical example, standard in the context of an equiregular sub-Riemannian structure.
    \begin{example}[The Heisenberg Case on $\mathbb{R}^3$]
         On $\mathbb R^3$, consider the smooth vector fields $$X^{01}=\dl_x \text{ and } X^{02}=\dl_y+x\dl_z.$$ For any $x\in \mathbb R^3$, $X^{01},X^{02} \textit{ and } X^{11}:=[X^{01},X^{02}]=\dl_z=:-X^{12}$ span $\mathbb R^3$ (equiregular case of step 2). Following definition (\ref{g^inf}), define the sub-Riemannian metric $g^0$ on $\mathbb R^3$ as $$g^0(X)=\inf\{u_1^2+u_2^2; {\bfu}=(u_i)_{i=1,2}\in\mathbb R^2, u_1X^{01}+u_2X^{02}=X\}.$$ Following definition (\ref{g^s}), define the Riemannian metric $g^h$ on $\mathbb R^3$ as $$g^h(X)=\inf\{u_1^2+u_2^2+h^{-2}u_3^2+h^{-2}u_4^2; {\bfu}=(u_i)_{i=1,2,3,4}\in\mathbb R^4, u_1X^{01}+u_2X^{02}+u_3X^{11}+u_4X^{12}=X\}.$$ For $X=(a,b,c)\in\mathbb R^3$, direct computation implies that \begin{equation}\label{ltdisc}
             g^h((a,b,c))=a^2+b^2+\dfrac{(c-xb)^2}{2h^2},
         \end{equation} which is clearly a smoothly defined inner product on $\mathbb R^3$. Moreover, for any $(a,b,c)\in \mathbb R^3$, $$g^0((a,b,c))=a^2+b^2.$$ For $g^0$, which is the limit of $g^h$ as $h\rightarrow 0$, to be finite, $c-xb$ must be $0$, which means that $X=aX^{01}+bX^{02}$, that is $X$ is horizontal.
       
        Denote by $\Gbb_h$ the representation matrix of $g^h$. We compute  $$A_0=\begin{pmatrix}
1 & 0 & 0\\
0 & 1 & x
\end{pmatrix}, A_1=\begin{pmatrix}
0 & 0 & 1 \\
0 &0  &-1
\end{pmatrix}\text{ and } \Gbb_h^{-1}=\begin{pmatrix}
1 & 0 & 0\\
0 & 1 & x\\
0 & x & 2h^2+x^2
\end{pmatrix}, $$ and observe that, $ A^t_0{ A_0}+h^2{ A^t}_1{ A_1}=\Gbb_h^{-1}$. Moreover, we have that $$\mathrm{spec} ( A_0^t A_0)=\{1,x^2+1,0\}, \;\mathrm{ and }\;\mathrm{spec} (\Gbb_h^{-1})=\{\lambda_1(h,x),\lambda_2(h,x),\lambda_3(h,x)\},$$ where $\lambda_1(h,x)=1$, $$\lambda_2(h,x) =\dfrac{1}{2}\left(\sqrt{(2h^2+x^2+1)^2-8h^2}+2h^2+x^2+1\right)\sim (x^2+1),$$ and $$\lambda_3(h,x)= \dfrac{1}{2}\left(-\sqrt{(2h^2+x^2+1)^2-8h^2}+2h^2+x^2+1\right)\sim h^2(1+o(h^{-2})).$$  For any $x$, two eigenvalues, $\lambda_1(h,x)$ and $\lambda_2(h,x)$ are of order 0 and one eigenvalue, $\lambda_3(h,x)$, is of order $h^2$. This is expected, as this is the equiregular case, and the growth vector equals to $(2,3)$ is constant everywhere. So, at any $x$, $$\mathrm{det}(\Gbb_h^{-1})=2h^{2}\sim 2h^{2}.$$ In this case, $\varsigma(x)=1$ and $f_h(x)=2\rightarrow f(x)=2$.

The volume form obtained from the approximation scheme is given by $$d\mathcal{P}_o=(1/\sqrt{f_0(x)})|dx\wedge dy\wedge dz|=\dfrac{1}{\sqrt{2}}|dx\wedge dy\wedge dz|.$$
Direct computation using the expression (\ref{ltdisc}), implies that $g^h(X^{0i},X^{0j})=\delta_i^j$ for $i,j=1,2$.
 So, $X^{01},X^{02},X^{11}$ is an adapted frame. Following (\ref{popbari}), as this is an equiregular setting with $r=1$ on $\mathbb R^3$, we have \begin{equation*}
            d\mathcal{P}=\nu_1\wedge\nu_2\wedge\nu_3=\frac{1}{\sqrt{2}}|dx\wedge dy\wedge(dz-xdy)|=\frac{1}{\sqrt{2}}|dx\wedge dy\wedge dz|=d\mathcal{P}_o,
        \end{equation*} where $\nu_1,\nu_2,\nu_3$ are the dual frame to $X^{01},X^{02},X^{11}$.
    \end{example}
    \begin{example}[Martinet Case on $\mathbb{R}^3$]
    On $\mathbb R^3$, consider the smooth vector fields $$X^{01}=\dl_x \text{ and } X^{02}=\dl_y+\frac{x^2}{2}\dl_z.$$ On $\mathbb R^3\setminus\{x=0\}$, $X^{01},X^{02} \textit{ and } X^{11}=[X^{01},X^{02}]=x\dl_z=: -X^{12}$ span $\mathbb R^3$ (step 2). On $\{x=0\}$, $X^{01},X^{02}, X^{11} \textit{ and } X^{21}=[X^{01},X^{11}]=\dl_z=-X^{22}$ span $\mathbb R^3$ (step 3). This is a non-equiregular sub-Riemannian setting. Following definition (\ref{g^inf}), define the sub-Riemannian metric $g^0$ on $\mathbb R^3$ as $$g^0(X)=\inf\{u_1^2+u_2^2\,;\, {\bfu}=(u_i)_{i=1,2}\in\mathbb R^3,\,\sum_{i=1}^2 u_iX^{0i}=X\}.$$ Following definition (\ref{g^s}), define the Riemannian metric $g^h$ on $\mathbb R^3$ as $$g^h(X)=\inf\{u_1^2+u_2^2+h^{-2}u_3^2+h^{-4}u_4^2\,;\, {\bfu}=(u_i)_{i=1,2,3,4}\in\mathbb R^4,\,\sum_{i=1}^2 u_iX^{0i}+u_3X^{11}+u_4X^{12}+u_5X^{21}+u_6X^{22} =X\}.$$ For $X=(a,b,c)\in\mathbb R^3$, computation using a Lagrange multiplier implies that \begin{equation}
        \label{ltdisc2}g^h((a,b,c))=a^2+b^2+\dfrac{(2c-bx^2)^2}{8h^2(h^2+x^2)},
    \end{equation} which is clearly a smoothly defined inner product on $\mathbb R^3$. Moreover, for any $(a,b,c)\in\mathbb R^3$, $$g^0(a,b,c)=a^2+b^2.$$ For $g^0$, which is the limit of $g^h$ as $h\rightarrow 0$, to be finite, $c-\frac{x^2}{2}b$ must be $0$, which means that $X=aX^{01}+bX^{02}$, that is $X$ is horizontal.  

    Denote by $\Gbb$ the representation matrix of $g^h$. We compute $$A_0=\begin{pmatrix}
1 & 0 & 0\\
0 & 1 & \dfrac{x^2}{2}
\end{pmatrix}, A_1=\begin{pmatrix}
0 & 0 & x \\
0 &0  &-x
\end{pmatrix}, A_2=\begin{pmatrix}
0 & 0 & 1\\
0 & 0 & -1
\end{pmatrix},\text{ and }  \Gbb_h^{-1}=\begin{pmatrix}
1 & 0 & 0\\
0 &1 & \dfrac{x^2}{2}\\
0 & \dfrac{x^2}{2} & 2h^4+2h^2x^2+\dfrac{x^4}{4}\end{pmatrix},$$
and observe that $A^t_0{ A_0}+h^2{ A^t}_1{ A_1}+h^4{ A^t}_2{ A_2}=\Gbb_h^{-1}$. Moreover we have that 
 $$\mathrm{spec} ( A_0^t A_0)=\{1,\dfrac{x^2}{4}+1,0\}, \;\mathrm{ and }\;\mathrm{spec} (\Gbb_h^{-1})=\{\lambda_1(h,x),\lambda_2(h,x),\lambda_3(h,x)\},$$ where $\lambda_1(h,x)=1$, $$\lambda_2(h,x)=\frac{1}{8} (8 h^4 + 8 h^2 x^2 +\sqrt{(-8 h^4 - 8 h^2 x^2 - x^4 - 4)^2 - 16 (8 h^4 + 8 h^2 x^2)} + x^4 + 4)\sim \dfrac{x^4}{4}+1,$$ and $$\lambda_3(h,x)= \frac{1}{8}  (8 h^4 + 8 h^2 x^2 -\sqrt{(-8 h^4 - 8 h^2 x^2 - x^4 - 4)^2 - 16 (8 h^4 + 8 h^2 x^2)} + x^4 + 4).$$
 Away from the singular plane $\{x=0\}$, two eigenvalues, $\lambda_1(h,x)$ and $\lambda_2(h,x)$ are of order $0$ and one eigenvalue, $\lambda_3(h,x)$ is of order $h^{2}$ as $\lambda_3(h,x)\sim h^2x^2$. So $$\mathrm{det}(\Gbb_h^{-1})=2h^2x^2+2h^4\sim 2h^2x^2.$$ In this case, $\varsigma(x)=1$ and $f_h(x)=2x^2+2h^2\rightarrow f(x)=2x^2$. 
 
 At $\{x=0\}$, two eigenvalues, $\lambda_1(h,x)$ and $\lambda_2(h,x)$, are of order $0$ and one eigenvalue, $\lambda_3(h,x)$ is of order $h^{4}$, as $\lambda_3(h,x)\sim h^4$. So $$\text{det}(\Gbb_h^{-1})=2h^4\sim 2h^4.$$  This is expected as the growth vector equals $(2,3)$ for $x\neq 0$, and $(2,2,3)$ at $\{x=0\}$.

 The volume form obtained from the approximation scheme is given by $$d\mathcal{P}_o=(1/\sqrt{f_0(x)})|dx\wedge dy\wedge dz|=(1/\sqrt{2}|x|)|dx\wedge dy\wedge dz|.$$ 
 Direct computation using (\ref{ltdisc2}) shows that  $g^h(X^{0i},X^{0j})=\delta_i^j$ for $i,j=1,2$. So, $X^{01},X^{02},X^{11},X^{21}$ is an adapted frame. Following (\ref{popbari}), and as $r=1$ on the equiregular region $\mathbb R^3\setminus\{x=0\}$, we have \begin{equation*}
            d\mathcal{P}=\nu_1\wedge\nu_2\wedge\nu_3=\frac{1}{\sqrt{2}}|dx\wedge dy\wedge(\frac{1}{|x|}dz-\frac{|x|}{2}dy)|=\frac{1}{{\sqrt{2}}|x|}|dx\wedge dy\wedge dz|=d\mathcal{P}_o,
        \end{equation*} where $\nu_1,\nu_2,\nu_3,\nu_4$ are the dual frame to  $ X^{01},X^{02},X^{11},X^{21}.$
\end{example}
In the Heisenberg and the Martinet examples, the initial vector fields are orthonormal and the coefficients pops up because of the dual frame solely. We give an example where this is not the case.
    \begin{example}
        On $\mathbb R^4$, consider the following smooth vector fields: $X^{01}=\dl_x,X^{02}=\dl_y,X^{03}=x\dl_z+y\dl t$. On $\mathbb R^4\setminus\{x=0\}$,  $X^{01},X^{02},X^{03}$ and $X^{11}=[X^{02},X^{03}]=\dl_t:=-X^{12}$ span $\mathbb R^4$. On $\{x=0\}$, $X^{01},X^{02},X^{03}$ and $X^{13}=[X^{01},X^{03}]=\dl_z:=-X^{14}$ span $\mathbb R^4$. Then, this is an equiregular setting with step 1.   Following definition (\ref{g^inf}), define the sub-Riemannian metric $g^0$ on $\mathbb R^4$ as $$g^0(X)=\inf\{u_1^2+u_2^2+u_3^2; {\bfu}=(u_i)_{i=1,2,3}\in\mathbb R^3, u_1X^{01}+u_2X^{02}+u_3X^{03}=X\}.$$ Following definition (\ref{g^s}), define the Riemannian metric $g^h$ on $\mathbb R^3$ as $$g^h(X)=\inf\{u_1^2+u_2^2+u_3^2+h^{-2}\sum_{i=4}^7 u_i; {\bfu}=(u_i)_{i=1,2,3,4,5,6,7}\in\mathbb R^5, \sum_{i=1}^3 u_i X^{0i}+\sum_{i=1}^4 u_{i+3}X^{1i}=X\}.$$ For $X=(a,b,c,d)\in\mathbb R^4$, computations imply that \begin{equation}\label{ltdisc3}
             g^h((a,b,c,d))=a^2+b^2+\dfrac{(cy-dx)^2+2h^2(c^2+d^2)}{2h^2(2h^2+x^2+y^2)},
         \end{equation} which is a smoothly defined inner product on $\mathbb R^3$. Moreover, for any $(a,b,c,d)\in \mathbb R^4$, $$g^0((a,b,c))=a^2+b^2+\dfrac{c^2}{x^2}.$$ For $g^0$, which is the limit of $g^h$ as $h\rightarrow 0$, to be finite, $dx-cy$ must be $0$, which means that $X$ is horizontal.
       
        Denote by $\Gbb_h$ the representation matrix of $g^h$. We compute  $$A_0=\begin{pmatrix}
1 & 0 & 0 &0\\
0 &0& 1 & x\\
0&0&x&y
\end{pmatrix}, A_1=\begin{pmatrix}
0 & 0 & 1&0 \\
0 &0  &-1&0
\\ 0 & 0 & 0&1 \\
0 &0  &0&-1
\end{pmatrix}\text{ and } \Gbb_h^{-1}=\begin{pmatrix}
1 & 0 & 0&1\\
0 & 1 &0& \\
0 & 0 &x^2+2h^2&xy\\
0&0&xy&2h^2+y^2
\end{pmatrix}, $$ and observe that, $ A^t_0{ A_0}+h^2{ A^t}_1{ A_1}=\Gbb_h^{-1}$. Moreover, we have that $$\mathrm{spec} ( A_0^t A_0)=\{1,1,x^2+y^2,0\}, \;\mathrm{ and }\;\mathrm{spec} (\Gbb_h^{-1})=\{\lambda_1(h,x),\lambda_2(h,x),\lambda_3(h,x),\lambda_4(h,x)\},$$ where $\lambda_1(h,x)=\lambda_2(h,x)=1$, $$\lambda_3(h,x)= \dfrac{1}{2} (2 h^2 + \sqrt{4 h^4 - 4 h^2 x^2 + 4 h^2 y^2 + x^4 + 2 x^2 y^2 + y^4} + x^2 + y^2)\sim (x^2+y^2),$$ and $$\lambda_4(h,x)= \dfrac{1}{2}(2 h^2 - \sqrt{4 h^4 - 4 h^2 x^2 + 4 h^2 y^2 + x^4 + 2 x^2 y^2 + y^4} + x^2 + y^2)\sim h^2.$$  For any $x\neq0$, three eigenvalues, $\lambda_1(h,x),\lambda_2(h,x)$ and $\lambda_3(h,x)$ are of order 0 and one eigenvalue, $\lambda_4(h,x)$, is of order $h^2$. This is expected, as this is the equiregular case, and the growth vector equals to $(3,4)$ is constant everywhere. So, at any $x\neq0$, $$\mathrm{det}(\Gbb_h^{-1})=2h^{2}x^2\sim 2h^{2}x^2.$$ In this case, $\varsigma(x)=1$ and $f_h(x)=2x^2\rightarrow f(x)=2x^2$.
 Then, the volume given by the approximation is given by $$d\mathcal{P}_o=(1/\sqrt{2(x^2+y^2)})|dx\wedge dy\wedge dz\wedge dt|.$$
 Now, direct computation using (\ref{ltdisc3}) shows that $$g^h(X^{03})=\frac{x^2+y^2}{1+\frac{x^2+y^2}{h}}=:a(x,y)^2.$$ Now, setting $$\{{X}_1,{X}_2,{X}_3,{X}_4\}=\{X^{01},X^{02},(1/a(x,y))X^{03},X^{11}\},$$ we get that $g^h({X}_i,{X}_j)=\delta_i^j$ for $i,j=1,...,4$. Thus $\{{X}_1,{X}_2,{X}_3,{X}_4\}$ is an adapted frame. Then following (\ref{bi's}), and as $$[X^{01},X^{03}]=\dl_z=\dfrac{1}{xa(x,y)}{X}_3+\dfrac{-y}{x}{X}_4,$$ we get that $b_{23}^4=1$ and $b_{13}^4=(y/x)$. Then, following (\ref{B_j's}), we get that $$B_2=\left(\frac{2}{x^2}(x^2+y^2)\right).$$ Thus, (\ref{popbari}) implies that $$d\mathcal{P}=\frac{|x|}{\sqrt{2}\sqrt{x^2+y^2}}\nu_1\wedge \nu_2\wedge\nu_3\wedge\nu_4=\frac{|x|}{\sqrt{2}\sqrt{x^2+y^2}}|dx\wedge dy\wedge\frac{1}{|x|}dz\wedge dt|=d\mathcal{P}_o,$$ where $\nu_1,\nu_2,\nu_3,\nu_4$ are the dual frame to 
$ X^{01},X^{02},X^{03},X^{11}.$\end{example}
\section{Convergence Of Spectrum}
\label{kitrez}
In this section, we assume that $M$ is compact. We define Laplace type operators: $\Delta_0$ associated with the sub-Riemannian structure
and $\Delta_h$ with the approximating sequence of Riemannian metrics. These operators have compact resolvent and hence a spectrum consisting
only of eigenvalues. The question we address is whether the spectrum of the approximating operator converges to the spectrum of the limiting one
in the following sense:
\[
  \forall k\geq 0,~~\lambda_k(\Delta_h) \tendsto{h}{0} \lambda_k(\Delta_0).
\]

We will consider two cases :
\begin{enumerate}
\item The operators $\Delta_0$ and $\Delta_h$ are defined relatively to a ($h$-independent) smooth volume form $\omega$.
\item The operators $\Delta_h$ are the Riemannian Laplace operators associated with the metric $g_h$ and $\Delta_0$ is
  associated with Popp's volume. 
\end{enumerate}

In the latter case, we will give the answer when the sub-Riemannian structure is equiregular.
In the former case, this distinction does not play any role.

We start by some definitions.
\subsection{Definitions And Notations}\label{mm328l}
Let $\omega$ be a smooth volume form: in any local coordinates $x$, $\omega$ can be written as $\phi(x)dx$ with $\phi$ smooth. 

We recall the definition of divergence relatively to $\omega$. 
\begin{definition}
         Fix a smooth volume form $\omega$ on $M$ and let $X$ be a smooth vector field.
         The divergence $\mathrm{div}_{\omega}(X)$ is defined as the function satisfying:
         For any $u,v\in\mathcal{C}^\mathcal{1}(M)$,
         \begin{equation}
             \label{definitionofdiv}\int_M uXvd\omega=\int_M \left[\left(-X-\mathrm{div}_{\omega}(X)\right)u\right]vd\omega.
         \end{equation}
     \end{definition}
     In the sequel we will use this definition for $\omega$ and for the sequence $\omega_h=\mathrm{dvol}g^h$. We will denote by $*_\omega$ and $*_h$ the adjoint, and by $\mathrm{div}_\omega$ and $\mathrm{div}_h$, the divergence with respect to these volume forms respectively.

    We denote by $L^2_{\omega}(M):=L^2_{\omega}((M,\mathbb R),d\omega)$ the Hilbert space associated to $d\omega$, given by 
$$L^2_{\omega}(M):=\left\{u:M\rightarrow\mathbb{R};\;\|u\|_{L^2_{\omega}(M)}^2:=\int_M |u|^2d\omega <\mathcal{1}\right\}.$$

     We conclude this paragraph by recalling the definition of $H_\omega^s(M):$ the Sobolev space with respect to $\omega.$

For $s\in \mathbb R$, define $\Lambda^s=(\text{Id}+\Delta_1)^{\frac{s}{2}},$ where Id is the identity operator.
Define the space $H_{\omega}^s(M)$ with respect to $\omega$ as
\begin{equation}
    \label{H^s}H^s_\omega(M)=\{u\in L^2_{\omega}(M);\; \norm{u}_{H^s_\omega(M)}:=\norm{\Lambda^su}_{L^2_{\omega}(M)}< \mathcal{1}\}.
\end{equation} 
Denote by $\mathcal{L}$ the space of linear bounded functions from $H^2_{\omega}(M)$ to $L^2_{\omega}(M)$.
\subsubsection{SubLaplace Operator}
On $L^2_{\omega}(M)$, we define the subLaplace operator as \begin{equation}
    \label{sublap} \Delta_0=\sum_{j=0}^{N_0} (X^{0j})^{*_\omega}X^{0j}=\sum_{j=0}^{N_0} \left(-(X^{0j})^2-\text{div}_{\omega}(X^{0j})X^{0j}\right),
  \end{equation} where the star denote the adjoint with respect to $d\omega$.

This operator $\Delta_0$ is essentially self-adjoint on $C^\infty(M)$ and sub-elliptic. It follows that it has a discrete spectrum.
  
\subsubsection{Approximating sequence}
We now proceed to define the approximating sequence of operators in both cases, when the volume form is fixed, and when it is associated to $g^h.$
\paragraph{With a fixed volume form}\hfill 

For any $h>0$, we define on $L^2_{\omega}(M)$, the family of elliptic operators:
$$ \Delta_h=\sum_{i=0}^{\bfr}\sum_{j=1}^{N_i}h^{2i}(X^{ij})^{*_\omega}X^{ij}
=\sum_{i=0}^{\bfr}\sum_{j=1}^{N_i}h^{2i}\left( -(X^{ij})^{2}-\text{div}_{\omega}(X^{ij})X^{ij}\right),$$
where the star denotes the adjoint with respect to $d\omega$.

This operator is essentially self-adjoint on $C^\infty(M)$ and elliptic with compact resolvent.
We observe that $\Delta_h$ is not the Riemannian Laplace operator associated with $g_h$.

\paragraph{Riemannian approximation}\hfill

We recall that the Hausdorff dimension of the equiregular region is assumed to be constant and denoted by $\bfQ$.
We also denote by  $L^2_h(M)$ the Hilbert space associated to $h^{\bfQ-d}\text{dvol}g^h$, given by
$$L^2_h(M):=\left\{u:M\rightarrow\mathbb{R};\;\norm{u}_{L^2_h(M)}^2:=\int_M |u|^2h^{\bfQ-d}\text{dvol}g^h <\mathcal{1}\right\}.$$
We recall that $*_h$ and $\text{div}_h$ denote the adjoint and divergence operations in this Hilbert space.

For any $h>0$, we define on $L^2_h(M)$, the family of elliptic operators:
$$ \til\Delta_h=\sum_{i=0}^{\bfr}\sum_{j=1}^{N_i}h^{2i}(X^{ij})^{*_h}X^{ij}=\sum_{i=0}^{\bfr}\sum_{j=1}^{N_i}h^{2i}\left( -(X^{ij})^{2}-\text{div}_h(X^{ij})X^{ij}\right).$$
This operator $\til\Delta_h$  is (up to the renormalizing factor $h^{\bfQ-d}$) the Riemannian Laplace operator associated with $g^h$, as it can be seen from the expression for the metric in a coordinate frame. 
\subsection{Convergence Of Spectrum With Fixed Volume Form}
In this section, we are interested in proving the convergence of the spectrum in the fixed volume form case.
The strategy is as follows: consider a sequence of eigenpairs $(\lambda_h,u_h)$ such that $\lambda_h$ is bounded. We want to extract a subsequence of $u_h$ that converges in $H^s_\omega(M)$ for large $s$. In order to do that, we need to gain compactness in $H^{s'}_\omega$ for $s'>s$. The key point is to prove a uniform bound on the eigenfunctions in the latter spaces. One way to do that is by proving local subelliptic estimates that are uniform with respect to $h$ (see \cite{helffer20052,kohn2005hypoellipticity,colindeverdiere:hal-02535865}). 

Here, we state a uniform parameter-dependent version of the subelliptic estimate and we give the proof in the appendix.

\begin{theorem}[Uniform Subelliptic Estimate]\label{sbellest}
       There exists $\epsilon>0$, such that, $\forall s\in \mathbb R,~\exists C(s)>0,~\forall h\in[0,1],~ \forall u\in\mathcal{C}^\mathcal{1}(M),$
      \begin{equation}\label{SEE}           
       \norm{u}_{H_{\omega}^{\epsilon+s}(M)}\leq C(s)\left(\norm{\Delta_hu}_{H_{\omega}^{s}(M)}+\norm{u}_{H_{\omega}^{s}(M)}\right).
      \end{equation}

   \end{theorem}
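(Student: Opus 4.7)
The plan is to follow the classical Kohn–Hörmander argument for the sub-Laplacian $\Delta_0$, but observe that all the key ingredients transfer to $\Delta_h$ with constants that are monotone (hence uniform) in $h$. The starting point is the quadratic-form monotonicity
\[
  \langle \Delta_h u,u\rangle_{L^2_\omega} \,=\, \sum_{i=0}^{\bfr}\sum_{j=1}^{N_i} h^{2i}\,\|X^{ij}u\|_{L^2_\omega}^2 \;\geq\; \sum_{j=1}^{N_0}\|X^{0j}u\|_{L^2_\omega}^2 \,=\, \langle \Delta_0 u,u\rangle_{L^2_\omega},
\]
which holds for every $h\in[0,1]$ and every $u\in\Cinf(M)$. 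So the $h$-dependent elliptic perturbation only makes the Dirichlet-type energy larger; nothing is ever lost by working with $\Delta_h$ instead of $\Delta_0$.

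First I would dispose of the case $s=0$. Cauchy–Schwarz applied to the above identity gives $\sum_j \|X^{0j}u\|_{L^2_\omega}^2 \leq \|\Delta_h u\|_{L^2_\omega}\|u\|_{L^2_\omega}$, and then Young's inequality turns the right-hand side into $\tfrac12(\|\Delta_h u\|_{L^2_\omega}^2 + \|u\|_{L^2_\omega}^2)$. At this point I would invoke the classical (purely $h=0$) Hörmander subelliptic estimate for the bracket-generating family $\Ccal_0$, which provides $\eps>0$ (depending only on the step $\bfr$) such that $\|u\|_{H^\eps_\omega}\leq C(\sum_j\|X^{0j}u\|_{L^2_\omega}+\|u\|_{L^2_\omega})$. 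Combining these two steps yields (\ref{SEE}) for $s=0$ with a constant that does not depend on $h$.

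For the case of arbitrary $s$, the natural move is to apply the $s=0$ estimate to $v=\Lambda^s u$ and write
\[
  \|u\|_{H^{s+\eps}_\omega} \,=\, \|\Lambda^s u\|_{H^\eps_\omega} \,\leq\, C\bigl(\|\Delta_h \Lambda^s u\|_{L^2_\omega} + \|\Lambda^s u\|_{L^2_\omega}\bigr).
\]
The second term is $\|u\|_{H^s_\omega}$; the first is controlled by $\|\Delta_h u\|_{H^s_\omega}$ plus the commutator $[\Lambda^s,\Delta_h]$. Expanding with the Leibniz rule $[\Lambda^s,(X^{ij})^{\ast_\omega}X^{ij}]=[\Lambda^s,(X^{ij})^{\ast_\omega}]X^{ij}+(X^{ij})^{\ast_\omega}[\Lambda^s,X^{ij}]$, each commutator is an $h$-independent pseudodifferential operator whose order is lower than the naive order of $\Lambda^s\Delta_h$. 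Controlling this commutator by something that is absorbable in $\|u\|_{H^{s+\eps}_\omega}$ (with a small constant) closes the estimate.

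The main obstacle is precisely this last step: the commutator $[\Lambda^s,\Delta_h]$ is, a priori, of order $s+1$, whereas the available gain is only $\eps\ll 1$. The standard remedy, which I would implement, is a Kohn-style iteration: prove the estimate by induction on $s$ stepping up by multiples of $\eps$, at each stage using the previous estimate together with interpolation between $H^s_\omega$ and $H^{s+\eps}_\omega$ to absorb the commutator terms into the left-hand side modulo a lower-order remainder; the whole scheme works uniformly in $h$ precisely because every commutator $[\Lambda^s,X^{ij}]$ is bounded independently of $h$, and because the underlying coercivity $\langle\Delta_h u,u\rangle\geq\langle\Delta_0 u,u\rangle$ is preserved after microlocalization by a self-adjoint pseudodifferential cut-off. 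This is the step where the parallel with \cite{kohn73, helffer20052} must be carried out with care, keeping all constants $h$-free.
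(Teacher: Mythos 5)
Your reduction of the $s=0$ case is correct and, in fact, shorter than what the paper does: the paper re-derives Kohn's subelliptic estimate uniformly in $h$ by building the whole pseudodifferential machinery with $h$-independent constants (the sets $\mathbf{P}_\epsilon$ of Theorem \ref{subellestfs=0}), whereas you observe that the quadratic-form monotonicity $\langle\Delta_h u,u\rangle_{L^2_\omega}\geq\langle\Delta_0u,u\rangle_{L^2_\omega}=\sum_j\|X^{0j}u\|^2_{L^2_\omega}$ lets you feed the classical $h=0$ estimate $\|u\|_{H^\eps_\omega}^2\lesssim\sum_j\|X^{0j}u\|^2_{L^2_\omega}+\|u\|^2_{L^2_\omega}$ directly, then Cauchy--Schwarz and Young give the $s=0$ bound with constants that cannot see $h$. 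That is a legitimate and cleaner route to Theorem \ref{subellestfs=0}.

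The generalization to arbitrary $s$ is where your proposal has a real gap. After conjugating by $\Lambda^s$ and applying Leibniz, the dangerous terms in $[\Lambda^s,\Delta_h]u$ are of the form $h^{2i}\,R_s\,X^{ij}\Lambda^su$ with $R_s$ a zero-order $\Psi$DO; each is genuinely of order $s+1$ in $u$, so a bound of the type $\lesssim\|u\|_{H^{s+1}_\omega}$, which is strictly worse than the left-hand side $\|u\|_{H^{s+\eps}_\omega}$ for every $\eps<1$. No amount of interpolation between $H^s_\omega$ and $H^{s+\eps}_\omega$, nor stepping up by multiples of $\eps$, can close a one-derivative deficit with an $\eps$-derivative gain; the Kohn iteration you invoke lives inside the $s=0$ proof (it improves the power of the gain, halving at each bracket) and does not help to pass from $s=0$ to general $s$. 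The mechanism the paper actually uses is Proposition \ref{akhhh}: since $h\leq 1$ one has $h^{2i}\|X^{ij}\Lambda^su\|_{L^2_\omega}\leq\|h^iX^{ij}\Lambda^su\|_{L^2_\omega}$, and the identity $\sum_{i,j}\|h^iX^{ij}v\|^2_{L^2_\omega}=\langle\Delta_hv,v\rangle_{L^2_\omega}$ plus Cauchy--Schwarz and Young gives, for every $\zeta>0$,
\[
  \|h^iX^{ij}\Lambda^su\|_{L^2_\omega}\;\leq\;\tfrac{\zeta}{\sqrt2}\,\|\Delta_h\Lambda^su\|_{L^2_\omega}+\tfrac{1}{\sqrt2\,\zeta}\,\|\Lambda^su\|_{L^2_\omega},
\]
with constants visibly $h$-free. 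The absorption is therefore performed on the right-hand side quantity $\|\Delta_h\Lambda^su\|_{L^2_\omega}$ (a proxy for $\|\Delta_hu\|_{H^s_\omega}$), not on $\|u\|_{H^{s+\eps}_\omega}$ as you propose; picking $\zeta$ small enough that the coefficient in front of $\|\Delta_h\Lambda^su\|_{L^2_\omega}$ is below $1$ closes the estimate in a single pass, with no induction on $s$. So the structure of your argument (conjugate by $\Lambda^s$, expand the commutator by Leibniz) matches the paper, but the crucial structural inequality on the Dirichlet form --- which is exactly the higher-order analogue of the monotonicity you exploited so effectively for $s=0$ --- is the missing ingredient, and interpolation is not a substitute for it.
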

   \begin{corollary}\label{correas}
  There exists $\epsilon>0$ such that for any $n\in\mathbb{N}$, there exists $C(n)>0$, for any $u$ smooth on $M$, and any $h\in[0,1]$, we have,
  \begin{equation}\label{ft5}
     \norm{u}_{H_{\omega}^{n\epsilon}(M)}\leq C(n)\left(\norm{\Delta_h^nu}_{L^2_{\omega}(M)}+\norm{\Delta_h^{n-1}u}_{L^2_{\omega}(M)}+...\norm{\Delta_hu}_{L^2_{\omega}(M)}+\norm{u}_{L^2_{\omega}(M)}\right).
 \end{equation}
 \end{corollary}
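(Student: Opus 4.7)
The plan is to prove the corollary by induction on $n$, iterating the uniform subelliptic estimate of Theorem \ref{sbellest}. The base case $n=1$ is exactly (\ref{SEE}) with $s=0$. For the inductive step, I will bootstrap: apply (\ref{SEE}) once at regularity level $s=(n-1)\epsilon$, and then apply the induction hypothesis to the two resulting terms in order to convert $H^{(n-1)\epsilon}_{\omega}$ norms back into $L^2_\omega$ norms of powers of $\Delta_h$.

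More precisely, assume the result at order $n-1$: for every smooth $v$ on $M$ and $h\in[0,1]$,
\[
  \|v\|_{H^{(n-1)\epsilon}_\omega(M)} \;\leq\; C(n-1)\sum_{k=0}^{n-1}\|\Delta_h^k v\|_{L^2_\omega(M)}.
\]
Applying (\ref{SEE}) with $s=(n-1)\epsilon$ yields a constant $C((n-1)\epsilon)$ (independent of $h$ by Theorem \ref{sbellest}) such that
\[
  \|u\|_{H^{n\epsilon}_\omega(M)} \;\leq\; C((n-1)\epsilon)\bigl(\|\Delta_h u\|_{H^{(n-1)\epsilon}_\omega(M)}+\|u\|_{H^{(n-1)\epsilon}_\omega(M)}\bigr).
\]
I then feed $v=u$ and $v=\Delta_h u$ (smooth since $\Delta_h$ is a differential operator) into the inductive hypothesis. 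This produces $\sum_{k=0}^{n-1}\|\Delta_h^k u\|_{L^2_\omega}$ from the second term, and $\sum_{k=0}^{n-1}\|\Delta_h^{k+1} u\|_{L^2_\omega}=\sum_{j=1}^{n}\|\Delta_h^{j}u\|_{L^2_\omega}$ from the first. Summing these two bounds covers $k$ from $0$ to $n$, and the resulting constant $C(n)$ depends only on $n$ (and on the fixed $\epsilon$), not on $h$ or $u$.

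There is essentially no obstacle beyond bookkeeping: the crucial ingredient is the $h$\nobreakdash-uniformity of the constant $C(s)$ in Theorem \ref{sbellest}, which is exactly what allows the constants $C(n)$ arising in the induction to be independent of $h\in[0,1]$. One should note that the estimate (\ref{SEE}) is applied at $s=(n-1)\epsilon$, not at $s=0$ repeatedly, so only finitely many values of the parameter $s$ appear in each application, which is why the induction closes without any loss. The smoothness assumption on $u$ guarantees that all iterated expressions $\Delta_h^k u$ are in $L^2_\omega(M)$ and in every Sobolev space, so no density argument is needed at this stage.
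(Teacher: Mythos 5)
Your proof is correct and follows essentially the same route as the paper's: the paper carries out the $n=2$ case explicitly (applying Theorem \ref{sbellest} at $s=\epsilon$ and then twice at $s=0$) and then says ``we go on recursively,'' which is exactly the induction you have made explicit by applying (\ref{SEE}) at $s=(n-1)\epsilon$ and invoking the inductive hypothesis on $u$ and $\Delta_h u$. Your remark that the $h$-uniformity of the constants is what allows the induction to close is precisely the point the paper relies on.
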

 \begin{proof}
   Take $s=\epsilon$ in (\ref{SEE}), we get
   $$\norm{u}_{H_{\omega}^{2\epsilon}(M)}\leq C\left(\norm{\Delta_hu}_{H_{\omega}^{\epsilon}(M)}+\norm{u}_{H_{\omega}^{\epsilon}(M)}\right)\leq C\left(\norm{\Delta_hu}_{H_{\omega}^{\epsilon}(M)}+\norm{\Delta_hu}_{L^2_{\omega}(M)}+\norm{u}_{L^2_{\omega}(M)}\right).$$ Now, apply (\ref{SEE}) for s=0 and $\Delta_hu$, to get that $$\norm{\Delta_hu}_{H_{\omega}^{\epsilon}(M)}\leq C\left(\norm{\Delta_h^2u}_{L^2_{\omega}(M)}+\norm{\Delta_hu}_{L^2_{\omega}(M)}\right).$$ We get (\ref{ft5}) for $n=2$. We  go on recursively to get (\ref{ft5}) for any $n$. 
 \end{proof}
 This kind of uniform estimates have also been studied in \cite{colindeverdiere:hal-02535865}. In the latter reference, the authors state as a remark that it should imply the spectral convergence; we give a full proof here.
 
 We need the following lemmas.
          \begin{theorem}[Convergence Of Spectrum]\label{convofbddspec}
      Let $(h_n)_{n\geq0}$ be a sequence that goes to 0 and $(u_n)_{n\geq0}$ be a sequence of normalized eigenfunctions of $\Delta_{h_n}$. Let $(\mu_n)_{n\geq0}$ be the associated sequence of eigenvalues. Assume that the sequence $(\mu_n)_{n\geq0}$ is bounded. Then, there exists an eigenpair $(\lambda_0,v_0)$ of $\Delta_0$ such that up to extracting a subsequence, $u_{n_k}$ converge to $v_0$ strongly in $H^s_\omega(M)$ for any $s$, and $\mu_{n_k}$ converges to $\lambda_0$.
    
      \end{theorem}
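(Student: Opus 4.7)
The plan is to obtain \emph{a priori} bounds on the sequence $(u_n)$ in every Sobolev space $H^s_\omega(M)$, then to extract a subsequence converging in each such space by compactness, and finally to identify the limit as an eigenfunction of $\Delta_0$ by passing to the limit in the eigenvalue equation. The key ingredient is already in hand: the $h$-uniform subelliptic estimate of Corollary \ref{correas}.

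First, I would exploit the eigenvalue equation $\Delta_{h_n}u_n=\mu_n u_n$ to observe $\Delta_{h_n}^k u_n = \mu_n^k u_n$, so that applying Corollary \ref{correas} with $u=u_n$ and $h=h_n$ yields
\[
\|u_n\|_{H^{k\epsilon}_\omega(M)} \,\leq\, C(k)\sum_{j=0}^{k} |\mu_n|^{j}\|u_n\|_{L^2_\omega(M)} \,\leq\, C(k)\bigl(1+|\mu_n|\bigr)^{k}.
\]
Since $(\mu_n)$ is assumed bounded and $\|u_n\|_{L^2_\omega}=1$, this furnishes a bound on $\|u_n\|_{H^{k\epsilon}_\omega(M)}$ uniform in $n$, for every $k\in \N$. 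Because $M$ is compact, the embedding $H^{s'}_\omega(M)\hookrightarrow H^{s}_\omega(M)$ is compact for any $s'>s$. A diagonal extraction then produces a subsequence, still denoted $(u_{n_k})$, that converges strongly in $H^s_\omega(M)$ for every $s\geq 0$, to some $v_0 \in \bigcap_{s\geq 0} H^s_\omega(M)\subset C^\infty(M)$. Strong $L^2_\omega$-convergence preserves the norm, so $\|v_0\|_{L^2_\omega}=1$, in particular $v_0\neq 0$. A further extraction ensures $\mu_{n_k}\to \lambda_0$ for some $\lambda_0\in \R$.

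It remains to pass to the limit in $\Delta_{h_{n_k}}u_{n_k}=\mu_{n_k}u_{n_k}$. I would split
\[
\Delta_{h_{n_k}}u_{n_k} \,=\, \Delta_0 u_{n_k} \,+\, \sum_{i=1}^{\bfr}\sum_{j=1}^{N_i} h_{n_k}^{2i}\, (X^{ij})^{*_\omega} X^{ij}\,u_{n_k}.
\]
Each operator $(X^{ij})^{*_\omega}X^{ij}$ is a smooth second-order differential operator, hence bounded $H^{2}_\omega(M)\to L^2_\omega(M)$; combined with the prefactor $h_{n_k}^{2i}$ and the uniform $H^{2}_\omega$-bound on $u_{n_k}$, the remainder sum tends to $0$ in $L^2_\omega(M)$. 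Meanwhile $u_{n_k}\to v_0$ strongly in $H^{2}_\omega(M)$ and $\Delta_0$ is bounded $H^{2}_\omega\to L^2_\omega$, so $\Delta_0 u_{n_k}\to \Delta_0 v_0$ in $L^2_\omega$. The right-hand side converges in $L^2_\omega$ to $\lambda_0 v_0$, and equating limits gives $\Delta_0 v_0 = \lambda_0 v_0$; since $v_0\neq 0$, $(\lambda_0,v_0)$ is an eigenpair of $\Delta_0$.

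The bulk of the difficulty of the whole argument is packed into the uniform subelliptic estimate (Theorem \ref{sbellest}), which is proved in the appendix. Granted that estimate, the only real subtlety in the present theorem is to ensure that the higher-order part of $\Delta_{h_{n_k}} - \Delta_0$ actually vanishes in the limit, and this is precisely where the \emph{uniformity in $h$} of Corollary \ref{correas} is essential: without it, the $H^2_\omega$-norms of $u_{n_k}$ could blow up as $h_{n_k}\to 0$ and absorb the $h_{n_k}^{2i}$ prefactors. Strong $L^2_\omega$-convergence then takes care of the non-triviality of $v_0$, and the argument closes.
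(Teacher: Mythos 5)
Your proof is correct and follows essentially the same route as the paper's: extract a convergent subsequence of $(\mu_n)$, use the $h$-uniform subelliptic estimate to get uniform $H^s_\omega$-bounds on the eigenfunctions, apply compact Sobolev embeddings with a diagonal argument to extract a strongly convergent subsequence, and pass to the limit in the eigenvalue equation via the decomposition $\Delta_h = \Delta_0 + \sum_{i\geq 1}\sum_j h^{2i}(X^{ij})^{*_\omega}X^{ij}$, with $L^2_\omega$-norm preservation giving $v_0\neq 0$. You spell out the limit-passage step (boundedness of the second-order pieces from $H^2_\omega$ to $L^2_\omega$) more explicitly than the paper does, but the underlying argument is the same.
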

      \begin{proof}
    Since the sequence $(\mu_n)_{n\geq0}$ is bounded, it has a subsequence $(\mu_{n_k})_{k\geq0}$ that converges to some $\lambda_0$. 
             
             Now, $(u_{n_k})_{k\geq0}$ is a sequence of smooth functions as they are eigenfunctions of elliptic Riemannian Laplace operators. If we apply the uniform estimate  (\ref{ft5}) to $u_{n_k}$ we get that for any $s$, \begin{equation}
              \label{efunbdd}\norm{u_{n_k}}_{H_{\omega}^s(M)}\leq C\norm{(\Delta_h+1)^{(\bfr+1)s}u_{n_k}}_{L^2_{\omega}(M)}=(1+\mu_{n_k}^{(\bfr+1)s})\norm{u_{n_k}}_{L^2_{\omega}(M)}=(1+\mu_{n_k}^{(\bfr+1)s}).
          \end{equation}
          Since $(\mu_n)_{n\geq0}$ is bounded, it implies that the sequence $(u_{n_k})_{k\geq0}$ is bounded in $H_{\omega}^s(M)$ for any $s$. Using that $H^{s+1}_\omega(M)$ embeds compactly into $H^s_\omega(M)$, and a diagonal argument, we can extract a subsequence $u_{n_k}$ that converges strongly in any $H^s_\omega(M)$ to $v_0$.

Since 

$$\Delta_h=\Delta_0+\sum_{i=1}^{\bfr}\sum_{j=1}^{N_i}h^{2i}(X^{ij})^{*_\omega}X^{ij},$$

we can pass to the limit in the eigenvalue equation to obtain that 

$$\Delta_0 v_0=\lambda_0v_0.$$

          Since $u_n$ is normalized and converges to $v_0$ strongly in $L^2_\omega(M)$, the latter function cannot be zero. Thus, $(\lambda_0,v_0)$ is an eigenpair of $\Delta_0$.

     \end{proof}
      We deduce the following corollary using min-max theorem.
      \begin{corollary}
        \label{convoforderedspec} Denote by $(\lambda_k)_{k\geq0}$ and $(\lambda_k(h))_{k\geq0}$ the ordered spectrum of $\Delta_0$ and $\Delta_h$ respectively, counted with multiplicities. Then, for any $k\geq0$ fixed, we have
        \begin{equation}
        \label{balaminmax1} \lim_{h\rightarrow 0}\lambda_k(h)=\lambda_k.
    \end{equation} 
    \end{corollary}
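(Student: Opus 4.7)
The plan is to sandwich $\lambda_k(h)$ between two quantities converging to $\lambda_k$ using the min-max characterisation of eigenvalues; both directions rest on the same elementary decomposition of the quadratic form of $\Delta_h$ into the quadratic form of $\Delta_0$ plus a manifestly non-negative remainder.

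For the lower bound, I would observe that as quadratic forms on $C^\infty(M)$,
$$\langle\Delta_h u,u\rangle_{L^2_\omega} \,=\, \sum_{i=0}^{\bfr}\sum_{j=1}^{N_i} h^{2i}\,\|X^{ij}u\|^2_{L^2_\omega} \,\geq\, \sum_{j=1}^{N_0}\|X^{0j}u\|^2_{L^2_\omega} \,=\, \langle\Delta_0 u,u\rangle_{L^2_\omega},$$
since the $i\geq1$ terms are non-negative. Because $C^\infty(M)$ is a form core for both essentially self-adjoint operators and the ambient Hilbert space is the same, the min-max principle immediately gives $\lambda_k(h)\geq\lambda_k$ for every $h>0$, hence $\liminf_{h\to0}\lambda_k(h)\geq\lambda_k$.

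For the upper bound, I would pick $L^2_\omega$-orthonormal eigenfunctions $v_0,\ldots,v_k$ of $\Delta_0$ associated with $\lambda_0,\ldots,\lambda_k$; by hypoellipticity of $\Delta_0$ they are smooth, so the subspace $V_k\defeq\mathrm{span}(v_0,\ldots,v_k)$ sits inside the form domain of every $\Delta_h$. For $u=\sum c_iv_i\in V_k$ with $\sum c_i^2=1$, the decomposition above yields
$$\langle\Delta_h u,u\rangle \,=\, \sum_{i=0}^k c_i^2\lambda_i \,+\, \sum_{i=1}^{\bfr}\sum_{j=1}^{N_i} h^{2i}\,\|X^{ij}u\|_{L^2_\omega}^2 \,\leq\, \lambda_k + C h^2,$$
where $C$ depends only on $V_k$ because each $u\mapsto\|X^{ij}u\|^2$ is a continuous quadratic form on the finite-dimensional $V_k$ and hence bounded on its unit sphere. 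Feeding $V_k$ into the min-max characterisation of $\lambda_k(h)$ gives $\lambda_k(h)\leq\lambda_k+Ch^2$, so $\limsup_{h\to0}\lambda_k(h)\leq\lambda_k$; combining both bounds yields the claim.

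The argument is essentially bookkeeping once one has the pointwise form inequality and the smoothness of eigenfunctions of $\Delta_0$, so no serious obstacle remains. Theorem~\ref{convofbddspec} is not strictly necessary for this corollary, but it would provide an alternative contradiction-based route to the lower bound: assuming $\liminf_{h\to0}\lambda_k(h)<\lambda_k$, extract a subsequence $h_n\to0$ with $\lambda_k(h_n)\to\mu<\lambda_k$ and apply Theorem~\ref{convofbddspec} diagonally to the first $k+1$ mutually orthonormal eigenfunctions of $\Delta_{h_n}$; the strong $L^2_\omega$-convergence transfers orthonormality to the limits, producing $k+1$ linearly independent eigenfunctions of $\Delta_0$ with eigenvalues $\leq\mu<\lambda_k$, contradicting the definition of $\lambda_k$ as the $(k+1)$-st eigenvalue.
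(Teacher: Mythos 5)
Your proof is correct, and the upper bound is the same min-max argument the paper uses (test the $(k+1)$-dimensional span of smooth $\Delta_0$-eigenfunctions against the $\Delta_h$-form and absorb the $h$-dependent terms into an $O(h^2)$ error). Where you diverge is the lower bound: you observe that
\[
\langle \Delta_h u,u\rangle_{L^2_\omega} \;=\; \langle \Delta_0 u,u\rangle_{L^2_\omega} \;+\; \sum_{i=1}^{\bfr}\sum_{j=1}^{N_i} h^{2i}\|X^{ij}u\|_{L^2_\omega}^2 \;\geq\; \langle \Delta_0 u,u\rangle_{L^2_\omega},
\]
and since the form domain of the elliptic $\Delta_h$ is contained in that of $\Delta_0$, the min-max principle immediately gives the monotone inequality $\lambda_k(h)\geq\lambda_k$ for every $h>0$, independently of any compactness. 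The paper instead proves the lower bound by taking the first $k+1$ orthonormal eigenfunctions of $\Delta_h$, invoking Theorem~\ref{convofbddspec} (which rests on the uniform subelliptic estimate) to extract a subsequence converging in every $H^s_\omega$, and then testing $\Delta_0$ against the limiting $(k+1)$-dimensional space. Your route is more elementary and yields a strictly stronger statement (one-sided monotonicity rather than just convergence); the paper's route is heavier but demonstrates convergence of eigenspaces, not merely of eigenvalues, which is information they may want to reuse. You correctly note that the paper's approach can also be phrased as a contradiction argument and that Theorem~\ref{convofbddspec} is not logically required for this corollary. No gap.
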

    \begin{proof}
    It is enough to prove that any converging subsequence of $\lambda_k(h)$ converges to $\lambda_k.$ We still denote by $\lambda_k(h)$ such a sequence and by $\til\lambda$ its limit.
    
    We first prove that $\til\lambda\leq \lambda_k.$ 

    Denote by $V_0$ the space spanned by a set of eigenfunctions that correspond to the first $k$ eigenvalues of $\Delta_0$. Using min-max theorem, we compute
    
    \begin{equation*}
    \begin{split}
           \lambda_k(h)&\leq \Lambda_h(V_0):=\max_{u\in V_0}\left\{\dfrac{\lan\Delta_hu,u\ran_{L^2_\omega(M)}}{\norm{u}_{L^2_\omega(M)}^2}\right\}\\&=\max_{u\in V_0}\left\{\dfrac{\lan \Delta_0u,u\ran_{L^2_\omega(M)}}{\norm{u}_{L^2_\omega(M)}^2}+\dfrac{\sum_{i=1}^{\bfr}\sum_{j=1}^{N_i}h^{2i}(X^{ij})^*X^{ij}u}{\norm{u}_{L^2_\omega(M)}^2}\right\} \\&\leq\max_{u\in V_0}\left\{\dfrac{\lan \Delta_0u,u\ran_{L^2_\omega(M)}}{\norm{u}_{L^2_\omega(M)}^2}\right\}+\max_{u\in V_0}\left\{\dfrac{\sum_{i=1}^{\bfr}\sum_{j=1}^{N_i}h^{2i}(X^{ij})^*X^{ij}}{\norm{u}_{L^2_\omega(M)}^2}\right\}\underset{h\rightarrow0}{\rightarrow} \Lambda_0(V_0)=\lambda_k.
    \end{split}
    \end{equation*}
    Indeed, since $V_0$ is a finite dimensional space of smooth functions, we have 

    $$\max_{u\in V_0}\left\{\dfrac{\sum_{i=1}^{\bfr}\sum_{j=1}^{N_i}h^{2i}(X^{ij})^*X^{ij}}{\norm{u}_{L^2_\omega(M)}^2}\right\}=O(h^2).$$
    
    We get that  $\til\lambda\leq \lambda_k.$

    Now we prove the opposite inequality.

    Let $V_h$ denote the space spanned by a set of orthonormal eigenfunctions that correspond to the first $k$ eigenvalues of $\Delta_h$, $V_h=\mathrm{span}\{v_1(h),...,v_k(h)\}$. Using theorem \ref{convofbddspec}, we build a subsequence such that each $v_i(h)$ converges to $v_i$ in $H^s_\omega(M)$ for any $s$. By orthonormality of $v_i(h),$ $v_i$ are also orthonormal. We define the space of dimension $k, V_0=\mathrm{span}\{v_1,...,v_k\},$ and we observe that by orthonormality it is of dimension $k$. For any $i$, we have 
    
    $$\dfrac{\lan\Delta_0 v_i,v_i\ran_{L^2_\omega(M)}}{\norm{v_i}_{L^2_\omega(M)}^2}=\lim_{h\rightarrow0}\dfrac{\lan\Delta_h v_i(h),v_i(h)\ran_{L^2_\omega(M)}}{\norm{v_i(h)}_{L^2_\omega(M)}^2}\leq\til\lambda.$$

Taking the maximum over $V_0$ and applying the min-max theorem, we get that 

$$\lambda_k\leq \til\lambda.$$

We conclude.

    \end{proof}
 \subsection{Convergence of Spectrum In Equiregular Case}
 Suppose now the $\omega$ is the Popp's volume. Recall from the previous section, that $\lim_{h\rightarrow0} h^{{\bfQ}-d}\mathrm{dvol}g^h=d\omega.$ Denote by $\Psi_h$ the quotient between the two volume forms; 

 $$\Psi_h=\dfrac{h^{{\bfQ}-d}\mathrm{dvol}g^h}{dw}.$$

 When the sub-Riemannian setting is equiregular, the volume $\omega$ is smooth, ${\bfQ}$ is constant on $M$ and $\Psi_h$ is smooth and positive on $M$. Moreover, in the preceding section we proved that $\Psi_h$ (and thus $1/\Psi_h$) converges uniformly to the constant function $\mathbbm 1$ on $M$.

 Denote by $\til\lambda_k(h)$ the k-th eigenvalue of $\til\Delta_h.$ We aim at proving that $\lim_{h\rightarrow0}\til\lambda_k(h)=\lambda_k,$ where we recall that $\lambda_k$ denotes the k-th eigenvalue of $\Delta_0.$ By corollary \ref{convoforderedspec}, we know that $\lambda_k(h)\rightarrow \lambda_k$ as $h\rightarrow0.$ So, it is enough to prove that $\til\lambda_k(h)\sim\lambda_k(h)$ as $h\rightarrow0.$ We will do this using min-max theorem.

\begin{theorem}
     Denote by $(\lambda_k)_{k\geq0}$ and $(\til\lambda_k(h))_{k\geq0}$ the ordered spectrum of $\Delta_0$ and $\til\Delta_h$ respectively, counted with multiplicities. Then, for any $k\geq0$ fixed, we have \begin{equation}
        \label{balaminmax} \lim_{h\rightarrow 0}\til\lambda_k(h)=\lambda_k.
    \end{equation} 
\end{theorem}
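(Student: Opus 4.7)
The plan is to compare $\til\Delta_h$ (acting on $L^2_h(M)$) with the operator $\Delta_h$ (acting on $L^2_\omega(M)$, where $\omega$ is Popp's volume) through their respective Rayleigh quotients, and then invoke Corollary \ref{convoforderedspec}, which already gives $\lambda_k(h)\to \lambda_k$. Equiregularity is used precisely to ensure that $\Psi_h$ is smooth and positive on all of $M$, and that it converges uniformly to the constant function $1$.

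First I would write out the quadratic forms. For any smooth $u$, because $\til\Delta_h=\sum_{i,j}h^{2i}(X^{ij})^{*_h}X^{ij}$, integration by parts in $L^2_h(M)$ gives
\[
  \langle \til\Delta_h u,u\rangle_{L^2_h} \,=\, \sum_{i,j} h^{2i}\int_M |X^{ij}u|^2\,\Psi_h\,d\omega,\qquad \|u\|_{L^2_h}^2\,=\,\int_M |u|^2\Psi_h\,d\omega,
\]
while the Rayleigh quotient of $\Delta_h$ in $L^2_\omega(M)$ has the same structure with $\Psi_h$ replaced by $1$. This identifies the only discrepancy between the two problems as a multiplicative weight $\Psi_h$.

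Next, for any $\varepsilon>0$, using the uniform convergence $\Psi_h\to 1$ on the compact manifold $M$, I would pick $h$ small enough so that $1-\varepsilon\leq \Psi_h\leq 1+\varepsilon$ everywhere. Then, for every non-negative integrable function $f$ on $M$,
\[
  (1-\varepsilon)\int_M f\,d\omega\,\leq\,\int_M f\,\Psi_h\,d\omega\,\leq\,(1+\varepsilon)\int_M f\,d\omega.
\]
Applying this to both numerator and denominator of the Rayleigh quotient, I obtain, for every smooth $u\not\equiv 0$,
\[
  \frac{1-\varepsilon}{1+\varepsilon}\,R^{\Delta_h}(u)\,\leq\,R^{\til\Delta_h}(u)\,\leq\,\frac{1+\varepsilon}{1-\varepsilon}\,R^{\Delta_h}(u).
\]
The min-max principle then yields the same two-sided inequality at the level of the ordered eigenvalues:
\[
  \frac{1-\varepsilon}{1+\varepsilon}\,\lambda_k(h)\,\leq\,\til\lambda_k(h)\,\leq\,\frac{1+\varepsilon}{1-\varepsilon}\,\lambda_k(h).
\]
Letting $h\to 0$ and using Corollary \ref{convoforderedspec}, I get
\[
  \frac{1-\varepsilon}{1+\varepsilon}\,\lambda_k\,\leq\,\liminf_{h\to 0}\til\lambda_k(h)\,\leq\,\limsup_{h\to 0}\til\lambda_k(h)\,\leq\,\frac{1+\varepsilon}{1-\varepsilon}\,\lambda_k.
\]
Since $\varepsilon$ is arbitrary, this gives $\til\lambda_k(h)\to \lambda_k$, which is the desired conclusion.

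The substantive points of this proof are not analytical but conceptual: the eigenvalue problem for $\til\Delta_h$ in $L^2_h(M)$ is unitarily equivalent, after the change of reference measure, to an eigenvalue problem whose quadratic form differs from that of $\Delta_h$ only by the weight $\Psi_h$. The real work has already been done earlier, namely the smoothness and uniform convergence of $\Psi_h$ under the equiregularity assumption (Proposition \ref{singsetoo} together with the definition of $\dvolsr$), and the spectral convergence of $\Delta_h$ to $\Delta_0$ (Corollary \ref{convoforderedspec}). The only thing to verify carefully is that both operators share the same underlying domain of smooth functions on $M$, so that min-max can be applied with the same trial spaces; this is immediate since $M$ is compact and both forms are defined on $\mathcal{C}^\infty(M)$. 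I do not foresee a serious obstacle here: once the weight $\Psi_h$ is under control, the argument is a clean comparison of Rayleigh quotients.
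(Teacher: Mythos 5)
Your proof is correct and is essentially the same as the paper's: both exploit the identity $\langle\til\Delta_h u,u\rangle_{L^2_h}=\sum_{i,j}h^{2i}\int|X^{ij}u|^2\Psi_h\,d\omega$ and $\|u\|^2_{L^2_h}=\int|u|^2\Psi_h\,d\omega$, sandwich the Rayleigh quotient of $\til\Delta_h$ between constants times that of $\Delta_h$ using the uniform convergence $\Psi_h\to 1$, then apply min-max and Corollary \ref{convoforderedspec}. The paper phrases the comparison with the explicit factor $a(h)=(1+\|\Psi_h-1\|_\infty)(1+\|1/\Psi_h-1\|_\infty)$ and uses concrete eigenspace trial subspaces, while you phrase it with an $\varepsilon$-sandwich and invoke min-max directly; this is a cosmetic difference only.
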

 \begin{proof}
 First, for any $u$,  notice that 

 $$\left(1+\norm{\dfrac{1}{\Psi_h}-1}_{\infty}\right)^{-1}\norm{u}_{L^2_\omega(M)}\leq \norm{u}_{L^2_h(M)}\leq(1+\norm{{\Psi_h}-1}_{\infty})\norm{u}_{L^2_\omega(M)}.$$ 

 Let $V_h$ be the space spanned by a set of orthonormal eigenfunctions that correspond to the first $k$ eigenvalues of $\Delta_h.$ We compute 
 \begin{equation*}
     \begin{split}
         \til\lambda_k(h)&\leq \til\Lambda(V_h):=\max_{u\in V_h}\left\{\dfrac{\sum_{i,j}h^{2i}\norm{X^{ij}u}^2_{L^2_h(M)}}{\norm{u}^2_{L^2_h(M)}}\right\}\\&\leq \dfrac{(1+\norm{{\Psi_h}-1}_{\infty})}{\left(1+\norm{\dfrac{1}{\Psi_h}-1}_{\infty}\right)^{-1}}\max_{u\in V_h}\left\{\dfrac{\sum_{i,j}h^{2i}\norm{X^{ij}u}^2_{L^2_\omega(M)}}{\norm{u}^2_{L^2_\omega(M)}}\right\}\\&=a(h)\lambda_k(h),
     \end{split}
 \end{equation*}
with 

$$a(h)=\dfrac{(1+\norm{{\Psi_h}-1}_{\infty})}{\left(1+\norm{\dfrac{1}{\Psi_h}-1}_{\infty}\right)^{-1}}\underset{h\rightarrow0}{\longrightarrow1}.$$

Now, Let $\til V_h$ be the space spanned by a set of orthonormal eigenfunctions that correspond to the first $k$ eigenvalues of $\til\Delta_h.$ We compute 
 \begin{equation*}
     \begin{split}
         \lambda_k(h)&\leq \til\Lambda(\til V_h):=\max_{u\in\til V_h}\left\{\dfrac{\sum_{i,j}h^{2i}\norm{X^{ij}u}^2_{L^2_\omega(M)}}{\norm{u}^2_{L^2_\omega(M)}}\right\}\\&\leq \dfrac{\left(1+\norm{\dfrac{1}{\Psi_h}-1}_{\infty}\right)}{(1+\norm{{\Psi_h}-1}_{\infty})^{-1}}\max_{u\in\til V_h}\left\{\dfrac{\sum_{i,j}h^{2i}\norm{X^{ij}u}^2_{L^2_h(M)}}{\norm{u}^2_{L^2_h(M)}}\right\}
         \\&=a(h)\til\lambda_k(h),
     \end{split}
 \end{equation*}

We finally have 

$$(a(h))^{-1}\lambda_k(h)\leq \til\lambda_k(h)\leq a(h)\lambda_k(h)$$

This proves that $\lambda_k(h)\sim\til\lambda_k(h).$ 
\end{proof}

       In the non-equiregular setting, the singular set $\mathcal{Z}$ is non-empty, and the volume form associated to the Riemannian approximation blows up near $\mathcal{Z}$. This fact implies that several statements that we used in the proof may not be valid anymore. For instance, $M\setminus\mathcal{Z}$ is not compact anymore, and it is not clear whether the subelliptic estimate remains true. As a consequence, if we denote by  $\mathcal{C}^\infty_0(M\setminus\mathcal{Z})$ the set of all smooth functions on $ M\setminus\mathcal{Z}$ that vanishes on a neighborhood of $\mathcal{Z}$, we don't know in full generality whether $(\Delta_0,\mathcal{C}^\infty_0(M\setminus\mathcal{Z}))$ is essentially self-adjoint. This question has been studied in \cite{boscain2013laplace}\cite{boscain2016self}\cite{franceschi2020essential}\cite{prandi2018quantum} where conditions on the singular set $\mathcal{Z}$ are given that imply the essential selfadjointness of $(\Delta_0,\mathcal{C}^\infty_0(M\setminus\mathcal{Z}))$. For instance, the authors in \cite{franceschi2020essential} prove that this holds for the sublaplacian defined with respect to Popp's volume,  under some regularity assumption on the sub-Riemannian structure (Popp regularity). 

This observation raises new questions in our settings: do we have spectral convergence when the limiting operator is essentially selfadjoint? If it is not and we still have spectral convergence, what is the selfadjoint extension in the limit?
   \begin{appendix}
    \section{\textbf{Appendix}: Uniform Subelliptic Estimate For Fixed Volume Form}
    We give in this appendix the proof of theorem \ref{sbellest}. We will follow \cite{helffer20052} and adapt Kohn's proof
(see also \cite{colindeverdiere:hal-02535865}\cite{kohn2005hypoellipticity}). We note that we do not aim at having an optimal gain of regularity
(the optimal order of subellipticity is $1/\bfr$ as proved in \cite{rothschild1976hypoelliptic}).

We recall that any zero order pseudo-differential operator on $M$ is bounded in any Sobolev space, and that any negative order pseudo-differential operator is bounded in $L^2_\omega(M)$. Moreover, the difference of two pseudo-differential operators with the same principle symbol of order $p$ is of order $p-1$.
Finally, the norm of a pseudo-differential operator in $\mathcal{L}(L^2_\omega(M))$ depends only on a finite number of derivatives of its principle symbol.
See \cite{calderon1972class}\cite{zworski2022semiclassical}.

 Recall that for $s\in \mathbb R$, define $\Lambda^s=(\text{Id}+\Delta_1)^{\frac{s}{2}},$ where Id is the identity operator. In this section, we will be dealing with the fixed volume form so we simply write $*$ instead of $*_\omega$ for the adjoint with respect to this volume form.
 
 We first prove (\ref{SEE}) for $s=0$.
 \begin{theorem}\label{subellestfs=0}
    There exists $\epsilon>0$ and a constant $C>0$ such that for all $h\in [0,1]$ and all $u\in\mathcal{C}^\mathcal{1}(M)$, 
   \begin{equation}\label{SEEprev}
       \norm{u}_{H_{\omega}^{\epsilon}(M)}\leq C\left(\norm{\Delta_hu}_{L^2_{\omega}(M)}+\norm{u}_{L^2_{\omega}(M)}\right).
   \end{equation} 
   \end{theorem}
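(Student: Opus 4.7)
The plan is to adapt Kohn's classical subelliptic estimate for sums of squares of vector fields (as presented in \cite{helffer20052}) with careful tracking of the dependence on $h$. The crucial observation is that, although $\Delta_h$ contains many $h$-dependent terms, the only $L^2$ control we will ever need from $\Delta_h$ is the control of the $X^{0j}u$, and this control is automatically uniform in $h$. Consequently, once we pass beyond the initial energy estimate, no $h$ appears anywhere in the remainder of the argument.

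\smallskip

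\noindent\textbf{Step 1 (energy estimate).} Using that $(X^{ij})^{*} X^{ij}$ is self-adjoint nonnegative and the definition of $\Delta_h$,
\[
\sum_{i=0}^{\bfr}\sum_{j=1}^{N_i} h^{2i}\|X^{ij}u\|_{L^2_\omega}^2 \;=\; \langle \Delta_h u,u\rangle_{L^2_\omega} \;\leq\; \tfrac{1}{2}\|\Delta_h u\|_{L^2_\omega}^2+\tfrac{1}{2}\|u\|_{L^2_\omega}^2.
\]
Isolating the $i=0$ terms yields, uniformly for $h\in[0,1]$,
\[
\sum_{j=1}^{N_0}\|X^{0j}u\|_{L^2_\omega}^2 \;\leq\; C\bigl(\|\Delta_h u\|_{L^2_\omega}^2+\|u\|_{L^2_\omega}^2\bigr).
\]

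\noindent\textbf{Step 2 (Kohn's bracket lemma).} For any two smooth vector fields $X,Y$ on $M$ and any $s\in\mathbb{R}$, the pseudo-differential commutator calculus gives
\[
\|[X,Y]u\|_{H^{s-1/2}_\omega}^{2}\;\leq\; C\bigl(\|Xu\|_{H^{s}_\omega}^{2}+\|Yu\|_{H^{s}_\omega}^{2}+\|u\|_{H^{s}_\omega}^{2}\bigr).
\]
This is obtained by pairing $[X,Y]u$ with $\Lambda^{2s-1}[X,Y]u$, writing $[X,Y]=XY-YX$, integrating by parts, and absorbing commutators of $X,Y$ with $\Lambda^{2s-1}$ (which are of order one less). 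Crucially, $\Lambda=(\mathrm{Id}+\Delta_1)^{1/2}$ is fixed (defined via $\Delta_1$, not $\Delta_h$), so every constant here is $h$-independent.

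\smallskip

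\noindent\textbf{Step 3 (induction on bracket length).} For $X^{kj}$, a bracket of length $k+1$ of the $X^{0\ell}$, I will prove by induction on $k$ that
\[
\|X^{kj}u\|_{H^{-(1-2^{-k})}_\omega}^{2}\;\leq\; C_k\!\left(\sum_{\ell}\|X^{0\ell}u\|_{L^2_\omega}^{2}+\|u\|_{L^2_\omega}^{2}\right),
\]
with $C_k$ depending only on the fixed family $\Ccal_0$ and on $\omega$. The base case is Step~1; the induction step applies the bracket lemma with $s=-1+2^{-k}$ to $X^{kj}=[X^{0\ell},X^{(k-1)j'}]$.

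\smallskip

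\noindent\textbf{Step 4 (globalisation).} The uniform H\"ormander condition (\ref{hc}) together with compactness of $M$ and a partition of unity allows one to invert locally: on each chart, $\Lambda^\epsilon$ can be reconstructed, modulo lower order operators, from the $X^{kj}$ via pseudo-differential calculus, whence
\[
\|u\|_{H^{\epsilon}_\omega}^{2}\;\leq\; C\!\left(\sum_{k,j}\|X^{kj}u\|_{H^{-(1-\epsilon)}_\omega}^{2}+\|u\|_{L^2_\omega}^{2}\right),
\]
provided $\epsilon\leq 2^{-\bfr}$. Combining with Steps 1 and 3 yields (\ref{SEEprev}).

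\smallskip

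The main obstacle is the pseudo-differential bookkeeping in Steps 2 and 4, and in particular checking that everything is genuinely $h$-independent there. This is guaranteed by the design of the proof: the only place where $\Delta_h$ enters is the trivial energy estimate of Step~1, and from that point onward the argument is structurally identical to the standard proof of subellipticity for $\Delta_0$.
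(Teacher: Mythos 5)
Your overall strategy — isolate the only $h$-dependence in the trivial energy identity $\langle\Delta_h u,u\rangle\geq\sum_j\|X^{0j}u\|_{L^2_\omega}^2$, then propagate regularity through brackets by an $h$-independent Kohn-type bootstrap — is the right one, and it is the same idea the paper implements via the Helffer--Nier operator classes $\textbf{P}_\epsilon$. But there is a genuine gap between Steps 2 and 3. The bracket lemma you state is \emph{symmetric}: $X$ and $Y$ are both taken at the same Sobolev level $s$, producing the gain $s\mapsto s-\tfrac12$. Feeding this into the induction with $X=X^{0\ell}$ (controlled in $L^2$) and $Y=X^{(k-1)j'}$ (controlled in $H^{t_{k-1}}_\omega$) forces $s=\min(0,t_{k-1})=t_{k-1}$, and you only get $t_k=t_{k-1}-\tfrac12$, i.e.\ $t_k=-k/2$ — \emph{not} $t_k=-(1-2^{-k})$. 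Indeed your own numerology is inconsistent: applying your Step~2 lemma with $s=-1+2^{-k}$ gives $\|X^{kj}u\|_{H^{-3/2+2^{-k}}_\omega}$, strictly weaker than the $\|X^{kj}u\|_{H^{-1+2^{-k}}_\omega}$ claimed. The recursion $t_k=-k/2$ is fatal for Step 4: you would need $\epsilon-1\leq-\bfr/2$, hence $\epsilon\leq 1-\bfr/2$, which is non-positive as soon as $\bfr\geq 2$.

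The missing ingredient is the \emph{asymmetric} bracket lemma, which exploits that one factor (the $X^{0\ell}$) is always available at the $L^2$ level: for $t\leq 0$,
\begin{equation*}
\|[X,Y]u\|_{H^{(t-1)/2}_\omega}^{2}\;\leq\;C\bigl(\|Xu\|_{L^2_\omega}^{2}+\|Yu\|_{H^{t}_\omega}^{2}+\|u\|_{L^2_\omega}^{2}\bigr).
\end{equation*}
The proof is your Step 2 integration by parts, but now with $T=\Lambda^{t-1}[X,Y]$ of order $t\leq0$, so the error commutators $[X,T]$ and $[Y,T]$ are order $t\leq 0$ and hence $L^2$-bounded, and every bilinear term pairs $\Lambda^{t}Yu$ against a zero-order operator applied to $Xu$ or $u$. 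This gives the halving recursion $t_k=(t_{k-1}-1)/2$, hence $t_k=-(1-2^{-k})$, and Step 4 closes with $\epsilon\leq 2^{-\bfr}$. This is exactly the halving that the paper encodes, in the $\textbf{P}_\epsilon$ language, as $P\in\textbf{P}_\epsilon\Rightarrow[X^{0j},P]\in\textbf{P}_{\epsilon/2}$; once Step 2 is corrected, your more direct presentation does recover the paper's result.
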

   \begin{proof}
     The proof is a direct adaptation of Kohn's proof, so we just state the steps of the proof. Denote by $\textbf{P}$ the set of all pseudo-differential operators of order zero. For all $\epsilon\in ]0,1]$, we define 
     $$\textbf{P}_\epsilon=\left\{P\in \textbf{P}; \norm{Pu}_{H_{\omega}^\epsilon(M)}\leq C(\epsilon)\left(\norm{\Delta_hu}_{L^2_{\omega}(M)}+\norm{u}_{L^2_{\omega}(M)}\right), \forall u\in C^\mathcal{1}(M),\forall h\in[0,1]\right\}.$$ 

    Using standard pseudodifferential calculus, we obtain that
     \begin{itemize}
    \item For all $\epsilon_1\leq \epsilon_2$, $\textbf{P}_{\epsilon_2}\subseteq\textbf{P}_{\epsilon_1}$. 
    \item If $P\in \textbf{P}_\epsilon$ then $P^*\in \textbf{P}_\epsilon$ for any $\epsilon\leq\frac{1}{2}$.
    \item $\textbf{P}_{\epsilon}$ is left and right ideal in $\textbf{P}$.
    \item For all $j=1,...,n_0$ and all $\epsilon\leq \frac{1}{2}$, $X^{0,j}\Lambda^{-1}\in\textbf{P}_\epsilon$. 
    \item If $P\in\textbf{P}_\epsilon$, then $[X^{0,j},P]\in \textbf{P}_\frac{\epsilon}{2}$ for all $j=1,...,n_0$ and $\epsilon\leq\frac{1}{2}$.
    \item For $0\leq i\leq {\bfr}$, for all $1\leq j\leq N_i$, we have $X^{ij}\Lambda^{-1}\in\textbf{P}_{\frac{1}{2^{i+1}}}$ .
    \item  We have $\textbf{P}_\frac{1}{2^{{\bfr}+1}}=\textbf{P}$.
       \end{itemize}
       We deduce (\ref{SEEprev}).
     \end{proof}
     Before proceeding to the proof of \ref{sbellest}, we record a useful result.
       \begin{proposition}\label{akhhh}
         The following holds true: $\forall0\leq i\leq {\bfr},\forall 1\leq j\leq N_i,\forall \z>0,\forall h>0,\forall u\in\mathcal{C}^\mathcal{1}(M),$
         \begin{equation}
              \label{e5rmmhm?}\norm{h^iX^{ij}u}_{L^2_\omega(M)}\leq \frac{\z}{\sqrt{2}}\norm{\Delta_hu}_{L^2_\omega(M)}+\frac{1}{\sqrt{2}\z}\norm{u}_{L^2_\omega(M)}.
          \end{equation} 
          \begin{proof}
             We compute \begin{equation*}
                 \begin{split}
                \norm{h^iX^{ij}u}_{L^2_\omega(M)}^2&\leq\sum_{i=0}^{\bfr}\sum_{j=1}^{N_i}\norm{h^iX^{ij}u}_{L^2_\omega(M)}^2=\lan h^iX^{ij}u,h^iX^{ij}u\ran_{L^2_\omega(M)}\\&=\left\lan \sum_{i=0}^{\bfr}\sum_{j=1}^{N_i}h^{2i}(X^{ij})^*X^{ij}u,h^iu\right\ran_{L^2_\omega(M)}=\lan\Delta_hu,h^iu\ran_{L^2_\omega(M)}\\&\leq \norm{\Delta_hu}_{L^2_\omega(M)}\norm{u}_{L^2_\omega(M)}\leq \frac{\z^2}{2}\norm{\Delta_hu}_{L^2_\omega(M)}^2+\frac{1}{2\z^2}\norm{u}_{L^2_\omega(M)}^2.
                 \end{split}
             \end{equation*}
          \end{proof}
          \end{proposition}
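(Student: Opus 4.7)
The plan is to isolate a single term inside the quadratic form naturally associated with $\Delta_h$. From the definition $\Delta_h=\sum_{i',j'}h^{2i'}(X^{i'j'})^{*_\omega}X^{i'j'}$, pairing with $u$ in $L^2_\omega(M)$ and moving one copy of $X^{i'j'}$ across the inner product produces
$$\lan \Delta_h u,u\ran_{L^2_\omega(M)}=\sum_{i'=0}^{\bfr}\sum_{j'=1}^{N_{i'}}\norm{h^{i'}X^{i'j'}u}_{L^2_\omega(M)}^2.$$
Every summand on the right is non-negative, so I can drop all terms except the single one indexed by the fixed pair $(i,j)$ and obtain
$$\norm{h^iX^{ij}u}_{L^2_\omega(M)}^2\leq \lan\Delta_h u,u\ran_{L^2_\omega(M)}.$$

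The remaining steps are purely algebraic. First I apply Cauchy–Schwarz to bound $\lan\Delta_h u,u\ran_{L^2_\omega(M)}$ by $\norm{\Delta_h u}_{L^2_\omega(M)}\norm{u}_{L^2_\omega(M)}$. Then I invoke Young's inequality in the form $ab\leq \frac{\z^2}{2}a^2+\frac{1}{2\z^2}b^2$ with $a=\norm{\Delta_h u}_{L^2_\omega(M)}$ and $b=\norm{u}_{L^2_\omega(M)}$, yielding $\norm{h^iX^{ij}u}_{L^2_\omega(M)}^2\leq \frac{\z^2}{2}\norm{\Delta_h u}_{L^2_\omega(M)}^2+\frac{1}{2\z^2}\norm{u}_{L^2_\omega(M)}^2$. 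Finally, taking square roots and using the subadditivity $\sqrt{A+B}\leq\sqrt{A}+\sqrt{B}$ produces exactly the stated constants $\z/\sqrt{2}$ and $1/(\sqrt{2}\z)$.

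There is no genuine obstacle in this proof. The only point worth checking is the clean identity $\lan\Delta_h u,u\ran_{L^2_\omega(M)}=\sum\norm{h^{i'}X^{i'j'}u}_{L^2_\omega(M)}^2$: it relies on taking the adjoint with respect to the same fixed volume form $\omega$ used to define $\Delta_h$, which is built into the notation $(X^{i'j'})^{*_\omega}$. Any divergence terms coming from the integration by parts in the definition of $(X^{i'j'})^{*_\omega}$ recombine with $X^{i'j'}$ into precisely $\norm{X^{i'j'}u}_{L^2_\omega(M)}^2$, which is what keeps the summands non-negative and makes the termwise bound possible.
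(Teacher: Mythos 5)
Your proof is correct and follows essentially the same route as the paper's: rewrite $\lan\Delta_h u,u\ran_{L^2_\omega(M)}$ as the sum of squares $\sum_{i',j'}\norm{h^{i'}X^{i'j'}u}^2_{L^2_\omega(M)}$, drop all but the $(i,j)$-term, then apply Cauchy--Schwarz, Young's inequality with parameter $\z$, and subadditivity of the square root. Your write-up is in fact a bit cleaner than the paper's, which carries a stray factor $h^i$ through its chain of inequalities and reuses $(i,j)$ as both the fixed and the summed index; the underlying argument, though, is identical.
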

         We will restate and prove theorem \ref{sbellest}.
         \begin{theorem}
       There exists $\epsilon>0$, such that, $\forall s\in \mathbb R,~\exists C(s)>0,~\forall h\in[0,1],~ \forall u\in\mathcal{C}^\mathcal{1}(M),$
      \begin{equation}        
       \norm{u}_{H_{\omega}^{\epsilon+s}(M)}\leq C(s)\left(\norm{\Delta_hu}_{H_{\omega}^{s}(M)}+\norm{u}_{H_{\omega}^{s}(M)}\right).
      \end{equation} \end{theorem}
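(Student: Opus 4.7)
The plan is to bootstrap from the $s=0$ estimate (Theorem \ref{subellestfs=0}) via a commutator analysis with $\Lambda^s$, followed by an absorption argument. Since $\Lambda^s u \in \Cinf(M)$ whenever $u \in \Cinf(M)$, and since $\|\Lambda^s u\|_{H^\epsilon_\omega(M)} = \|u\|_{H^{s+\epsilon}_\omega(M)}$ and $\|\Lambda^s u\|_{L^2_\omega(M)} = \|u\|_{H^s_\omega(M)}$, applying Theorem \ref{subellestfs=0} to $\Lambda^s u$ gives
$$
\|u\|_{H^{s+\epsilon}_\omega(M)} \leq C\bigl(\|\Delta_h \Lambda^s u\|_{L^2_\omega(M)} + \|u\|_{H^s_\omega(M)}\bigr).
$$
Decomposing $\Delta_h \Lambda^s u = \Lambda^s \Delta_h u + [\Delta_h, \Lambda^s]u$ and observing that $\|\Lambda^s \Delta_h u\|_{L^2_\omega(M)} = \|\Delta_h u\|_{H^s_\omega(M)}$, the whole problem reduces to a uniform-in-$h$ $L^2$-bound on the commutator $[\Delta_h,\Lambda^s]u$.

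Expanding by the Leibniz rule for commutators,
$$
[\Delta_h,\Lambda^s] = \sum_{i,j} h^{2i}\Bigl((X^{ij})^{*}\,[X^{ij},\Lambda^s] + [(X^{ij})^{*},\Lambda^s]\,X^{ij}\Bigr),
$$
where each $[X^{ij},\Lambda^s]$ and $[(X^{ij})^{*},\Lambda^s]$ is a pseudodifferential operator of order $s$, hence bounded $H^s_\omega(M) \to L^2_\omega(M)$. Using $(X^{ij})^{*} = -X^{ij} - \mathrm{div}_\omega(X^{ij})$ in the first piece, and commuting $X^{ij}$ past $[X^{ij},\Lambda^s]$ (which produces the order-$s$ double commutator $[X^{ij},[X^{ij},\Lambda^s]]$), each contribution can be bounded, modulo $C\|u\|_{H^s_\omega(M)}$ from routine zero-order and double-commutator terms, by $C\,h^{2i}\|X^{ij}u\|_{H^s_\omega(M)}$. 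It thus suffices to control $h^{2i}\|X^{ij}u\|_{H^s_\omega(M)}$ uniformly.

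The decisive input is Proposition \ref{akhhh} applied to $\Lambda^s u$: for every $\zeta > 0$,
$$
\|h^i X^{ij}\Lambda^s u\|_{L^2_\omega(M)} \leq \tfrac{\zeta}{\sqrt{2}}\,\|\Delta_h \Lambda^s u\|_{L^2_\omega(M)} + \tfrac{1}{\sqrt{2}\zeta}\,\|u\|_{H^s_\omega(M)}.
$$
Since $[\Lambda^s, X^{ij}]$ has order $s$, one has $\|X^{ij}u\|_{H^s_\omega(M)} \leq \|X^{ij}\Lambda^s u\|_{L^2_\omega(M)} + C\|u\|_{H^s_\omega(M)}$, and multiplying by a harmless factor $h^i \leq 1$ yields
$$
h^{2i}\|X^{ij}u\|_{H^s_\omega(M)} \leq \zeta\,\|\Delta_h \Lambda^s u\|_{L^2_\omega(M)} + C_\zeta\,\|u\|_{H^s_\omega(M)}.
$$
Summing over the finitely many pairs $(i,j)$ and taking $\zeta$ small, we get $\|[\Delta_h,\Lambda^s]u\|_{L^2_\omega(M)} \leq \tfrac12 \|\Delta_h \Lambda^s u\|_{L^2_\omega(M)} + C\|u\|_{H^s_\omega(M)}$. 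Feeding this back into $\|\Delta_h \Lambda^s u\|_{L^2_\omega(M)} \leq \|\Delta_h u\|_{H^s_\omega(M)} + \|[\Delta_h,\Lambda^s]u\|_{L^2_\omega(M)}$ and absorbing the half on the left closes the estimate.

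The main obstacle will be the commutator bookkeeping: one must consistently split $h^{2i} = h^i \cdot h^i$, feeding one copy into Proposition \ref{akhhh} (so that the first-order derivative $X^{ij}$ on $u$ is converted into control by $\|\Delta_h \Lambda^s u\|_{L^2_\omega(M)}$) while using the other copy together with $h \leq 1$ as a uniform $O(1)$ factor. The absorption step is legitimate because $\Delta_h \Lambda^s u$ is a priori finite for smooth $u$; and the case $s < 0$ follows from the same argument applied to the adjoint (or by duality), since $\Lambda^s$ is self-adjoint and $\Delta_h$ is symmetric on $L^2_\omega(M)$.
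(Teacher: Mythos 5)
Your proof is correct and follows essentially the same route as the paper's: apply the $s=0$ estimate to $\Lambda^s u$, reduce to the commutator $[\Delta_h,\Lambda^s]$, use pseudodifferential calculus to reduce its $L^2$ norm to terms of the form $h^{2i}\norm{X^{ij}\Lambda^s u}_{L^2_\omega}$ (splitting $h^{2i}=h^i\cdot h^i$ so that one factor feeds Proposition \ref{akhhh} and the other is crushed by $h\leq 1$), and absorb the small multiple of $\norm{\Delta_h\Lambda^s u}_{L^2_\omega}$. The only cosmetic difference is the order in which you expand $(X^{ij})^{*}$ and commute through $[X^{ij},\Lambda^s]$ versus the paper's $I_s$/$J_s$/$K_s$ split, which is equivalent bookkeeping; the closing remark about treating $s<0$ by duality is unnecessary, since nothing in the argument (or in the boundedness of the order-$s$ commutators) restricts $s$ to be nonnegative.
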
      
      \begin{proof}
      Let $s\in\mathbb{R}$ and apply (\ref{SEEprev}) to $\Lambda^su$: \begin{equation}\label{a5}
    \norm{\Lambda^su}_{H_{\omega}^\epsilon(M)}\leq c\big(\norm{\Delta_h\Lambda^su}_{L^2_{\omega}(M)}+\norm{\Lambda^su}_{L^2_{\omega}(M)}\big).
\end{equation} Observe that \begin{equation}\label{Bb}
\norm{\Delta_h\Lambda^su}_{L^2_{\omega}(M)}\leq\norm{[\Delta_h,\Lambda^s]u}_{L^2_{\omega}(M)}+\norm{\Delta_hu}_{H_{\omega}^s(M)}.
\end{equation} We compute \begin{equation}\label{t'}
\begin{split}
      \norm{[\Delta_h,\Lambda^s]u}_{L^2_{\omega}(M)}&\leq \sum_{i=0}^{\bfr}\sum_{j=1}^{N_i}h^{2i} \norm{[(X^{ij})^*X^{i,j},\Lambda^s]u}_{L^2_{\omega}(M)}\\&\leq \sum_{i=0}^{\bfr}\sum_{j=1}^{N_i}h^{2i}\norm{[(X^{ij})^*,\Lambda^s]{X^{i,j}}u}_{L^2_{\omega}(M)}+\sum_{i=0}^{\bfr}\sum_{j=1}^{N_i}h^{2i}\norm{(X^{ij})^*[{X^{i,j}},\Lambda^s]u}_{L^2_{\omega}(M)}\\&= I_s+J_s.
\end{split}
\end{equation}
Combining (\ref{Bb}) and (\ref{t'}), we have \begin{equation}
    \label{t5ntaktrro} \norm{\Delta_h\Lambda^su}_{L^2_{\omega}(M)}\leq I_s+J_s+\norm{\Delta_hu}_{H_{\omega}^s(M)}.
\end{equation}
We first deal with $I_s$. Fix $s\in\mathbb R$. Using that $[(X^{ij})^*,\Lambda^s]\Lambda^{-s}$ and $[X^{i,j},\Lambda^s]\Lambda^{-s}$ are bounded on $L^2_\omega(M)$,
we compute \begin{equation}
    \label{t} \begin{split}
        I_s&=\sum_{i=0}^{\bfr}\sum_{j=1}^{N_i}h^{2i}\norm{[(X^{ij})^*,\Lambda^s]{X^{i,j}}u}_{L^2_{\omega}(M)}\\&=\sum_{i=0}^{\bfr}\sum_{j=1}^{N_i}h^{2i}\norm{[(X^{ij})^*,\Lambda^s]\Lambda^{-s}\Lambda^s{X^{i,j}}u}_{L^2_{\omega}(M)}\\&\leq c(s)\sum_{i=0}^{\bfr}\sum_{j=1}^{N_i}h^{2i}\norm{\Lambda^s{X^{i,j}}u}_{L^2_{\omega}(M)}\\&\leq c(s)\sum_{i=0}^{\bfr}\sum_{j=1}^{N_i}h^{2i}\left(\norm{X^{i,j}\Lambda^su}_{L^2_{\omega}(M)}+\norm{[X^{i,j},\Lambda^s]u}_{L^2_{\omega}(M)}\right)\\&\leq c(s)\sum_{i=0}^{\bfr}\sum_{j=1}^{N_i}h^{2i}\left(\norm{X^{i,j}\Lambda^su}_{L^2_{\omega}(M)}+C(s)\norm{u}_{H_{\omega}^s(M)}\right).
    \end{split}
\end{equation}
Using proposition \ref{akhhh}, $\exists c_1(s)\forall \z,\exists C_1(s,\z),\forall h\leq1,\forall u\in\mathcal{C}^\mathcal{1}(M),$ \begin{equation}
    \label{akhhh2} I_s\leq c_1(s)\z\norm{\Delta_h\Lambda^su}_{L^2_\omega(M)}+C_1(s,\z)\norm{u}_{L^2_\omega(M)}.
\end{equation}
We now deal with $J_s$. Observe that for any $h\leq 1$, \begin{equation}
\label{fantarni1}
    \begin{split}
        J_s&=\sum_{i=0}^{\bfr}\sum_{j=1}^{N_i}h^{2i}\norm{(X^{ij})^*[{X^{i,j}},\Lambda^s]u}_{L^2_{\omega}(M)}\\&\leq \sum_{i=0}^{\bfr}\sum_{j=1}^{N_i}h^{2i}\norm{[{X^{i,j}},\Lambda^s](X^{ij})^*u}_{L^2_{\omega}(M)}+\sum_{i=0}^{\bfr}\sum_{j=1}^{N_i}h^{2i}\norm{[(X^{ij})^*,[{X^{i,j}},\Lambda^s]]u}_{L^2_{\omega}(M)}\\&\leq\sum_{i=0}^{\bfr}\sum_{j=1}^{N_i}h^{2i}\norm{[{X^{i,j}},\Lambda^s](X^{ij})^*u}_{L^2_{\omega}(M)}+c(s)\norm{u}_{H^s_\omega(M)}\\&= K_s+c(s)\norm{u}_{H^s_\omega(M)},
    \end{split}
\end{equation} 
where the second inequality is due to the fact that $[(X^{ij})^*,[{X^{i,j}},\Lambda^s]]\Lambda^{-s}$ is of order 0.\\
Following the steps as in (\ref{t}), we have that
\begin{equation}
   \begin{split}
        K_s&\leq c(s)\sum_{i=0}^{\bfr}\sum_{j=1}^{N_i}h^{2i}\left(\norm{(X^{i,j})^*\Lambda^su}_{L^2_{\omega}(M)}+C(s)\norm{u}_{H_{\omega}^s(M)}\right)\\&\leq  c(s)\sum_{i=0}^{\bfr}\sum_{j=1}^{N_i}h^{2i}\left(\norm{X^{i,j}\Lambda^su}_{L^2_{\omega}(M)}+\norm{c^{ij}\Lambda^su}_{L^2_{\omega}(M)}+C(s)\norm{u}_{H_{\omega}^s(M)}\right),
    \end{split} 
\end{equation} 
where $c^{ij}=\text{div}_\omega(X^{ij})$ are smooth functions on $M$. Thus, we get that
\begin{equation}
    \label{fantarni2} K_s\leq c(s)\sum_{i=0}^{\bfr}\sum_{j=1}^{N_i}h^{2i}\left(\norm{X^{i,j}\Lambda^su}_{L^2_{\omega}(M)}+\sup_{m\in M}|c^{ij}(m)|C(s)\norm{u}_{H_{\omega}^s(M)}\right).
\end{equation}
Then, again using proposition \ref{akhhh}, we get that
$$\exists c_2(s),\forall \z,\exists C_2(s,\z),\forall h\leq 1,\forall u\in\mathcal{C}^\mathcal{1}(M),$$
\begin{equation}
    \label{hydeJ_k} J_s\leq c_2(s)\zeta\norm{\Delta_h\Lambda^su}_{L^2_\omega(M)}+C_2(s,\z)\norm{u}_{L^2_\omega(M)}.
\end{equation}
Plugging (\ref{akhhh2}) and (\ref{hydeJ_k}) in (\ref{t5ntaktrro}, we get that
$$\exists c_1(s),c_2(s),\forall \z,\exists C_1(s,\z),C_2(s,\z),\forall h\leq 1,\forall u\in\mathcal{C}^\mathcal{1}(M),$$ 
\begin{equation}\label{minhohoho}
        \norm{\Delta_h\Lambda^su}_{L^2_\omega(M)}\leq (c_1(s)+c_2(s))\z\norm{\Delta_h\Lambda^su}_{L^2_\omega(M)}+(C_1(s,\z)+C_2(s,\z))\norm{u}_{L^2_\omega(M)}+\norm{\Delta_hu}_{H_{\omega}^s(M)}.
\end{equation}
Choose $\z$ small enough so that $(c_1(s)+c_2(s))\z<1$. Denote this $\z$ by $\z_0$. Then, (\ref{minhohoho}) implies that for any $h\leq1$, for any $u\in\mathcal{C}^\mathcal{1}(M)$, \begin{equation}
    \label{e5rwhdehonmf} \norm{\Delta_h\Lambda^su}_{L^2_\omega(M)}\leq \dfrac{C_1(s,\z_0)+C_2(s,\z_0)}{1-(c_1(s)+c_2(s))\z_0}\norm{u}_{L^2_\omega(M)}+\dfrac{1}{1-(c_1(s)+c_2(s))\z_0}\norm{\Delta_hu}_{H_{\omega}^s(M)}.
\end{equation}
Finally, plugging (\ref{e5rwhdehonmf}) into (\ref{a5}), we get that: $\exists c_3(s)>0\forall h\leq 1,\forall u\in\mathcal{C}^\mathcal{1}(M),$ \begin{equation*}
     \norm{u}_{H^{\epsilon+s}_\omega(M)}=\norm{\Lambda^su}_{H_{\omega}^\epsilon(M)}\leq c_3(s)\left(\norm{\Delta_hu}_{H_{\omega}^s(M)}+\norm{u}_{L^2_\omega(M)}\right).
\end{equation*}
This concludes the proof.
         \end{proof}
      \end{appendix}
\bibliographystyle{plain}
\bibliography{biblio_new.bib}
\end{document}